\theoremstyle{definition}
\newtheorem{defn}{Definition}[section]
\theoremstyle{plain}
\newtheorem{thm}[defn]{Theorem}
\newtheorem{cor}[defn]{Corollary}
\newtheorem{prop}[defn]{Proposition}
\newtheorem{lem}[defn]{Lemma}
\newtheorem{ex}[defn]{Example}
\theoremstyle{remark}
\newtheorem{rem}{Remark}
\numberwithin{equation}{section}
\newcommand{\tr}[0]{\text{tr}}
\newcommand\me{\mathrm{e}}
\newcommand\mb{\mathbb}
\newcommand\mE{\mathbb{E}}
\newcommand\mP{\mathbb{P}}
\newcommand\mF{\mathcal{F}}
\begin{document}



\title{A Bernstein-type Inequality for High Dimensional Linear Processes with Applications to Robust Estimation of Time Series Regressions}
\author{Linbo Liu$^{1,2}$\thanks{Work done prior to joining AWS AI Labs. Email: linbol@amazon.com}~ and Danna Zhang$^1$\thanks{Email: daz076@ucsd.edu}\\ $^1$Department of Mathematics, UC San Diego,\quad $^2$AWS AI Labs}
\date{}
\maketitle

\begin{abstract}
Time series regression models are commonly used in time series analysis. However, in modern real-world applications, serially correlated data with an ultra-high dimension and fat tails are prevalent. This presents a challenge in developing new statistical tools for time series analysis. In this paper, we propose a novel Bernstein-type inequality for high-dimensional linear processes and apply it to investigate two high-dimensional robust estimation problems: (1) time series regression with fat-tailed and correlated covariates and errors, and (2) fat-tailed vector autoregression. Our proposed approach allows for exponential increases in dimension with sample size under mild moment and dependence conditions, while ensuring consistency in the estimation process.
\vspace{9pt}

\noindent {\it Key words and phrases:}
Bernstein-type Inequality; High Dimensional Time Series; Fat-tailed Data; Robust Estimation.
\end{abstract}
\section{Introduction}
High dimensional data analysis has become increasingly important in the era of big data, due to the explosion of massive datasets. High-dimensional linear regression has acquired significant relevance and attention. Specifically, consider the linear regression models
\begin{equation*}
Y_i = X_i^\top \beta + \xi_i, \quad i=1,\ldots, n
\end{equation*}
where $Y_i$, $X_i$ and $\xi_i$ are the response, covariate and error variables, respectively. Various regularization methods have been widely used for estimating the $p$-dimensional regression parameter vector, including \cite{tibshirani1996regression}, \cite{zou2005regularization}, \cite{fan2001variable},  \cite{bickel2009simultaneous}, \cite{meinshausen2009lasso} 
and many others; see \cite{buhlmann2011statistics} for 
a comprehensive overview. Most investigations assume that the covariates $X_i$ (if it is a random design) and errors $\xi_i$ are i.i.d.~Gaussian or sub-Gaussian random variables, which can be too restrictive in many real-world applications.

On one hand, serial correlation might occur when the data are collected over time. Linear regression with time series regressors and autoregressive errors is often considered (\cite{harvey1990econometric}, \cite{tsay1984regression}, \cite{shumway2000time}). On the other hand, many applications involving time series data are concerned with high dimensional objects and fat-tailed distributions, ranging from quantitative finance (\cite{cont2001empirical}) and portfolio allocation (\cite{kim2012measuring}) to risk management(\cite{koopman2008non}), brain network
(\cite{friston2011functional}) and geophysical dynamic studies (\citet{kondrashov2005hierarchy}).

Previous literature has made progress in linear regression with correlated errors. Specifically, the Lasso estimator was studied for linear regression with autoregressive errors by \cite{wang2007regression} and \cite{yoon2013penalized}, weakly dependent errors by \cite{gupta2012note} and long memory errors by \cite{kaul2014lasso}. However, these investigations mainly focus on cases where the dimension $p$ is smaller than the sample size $n$ or where the Gaussian assumption is imposed on the error process. More recently, \cite{wu2016performance} and \cite{chernozhukov2021lasso} used the framework of functional dependence measures to account for both dependent covariates and errors in linear regression, allowing $p$ to increase with $n$ at a polynomial rate while maintaining consistency. However, a narrow range is still restricted for the dimension in the presence of non-Gaussian and dependent errors. To address the ultra-high dimensional cases where $p$ can grow exponentially with $n$, various robust methods have been investigated for linear regression with i.i.d.~fat-tailed errors, including penalized Huber $M$-estimation (\cite{fan2017estimation}, \cite{loh2017statistical,loh2021scale}), sparse least trimmed squares (\cite{alfons2013sparse}), 
ESL-Lasso (\cite{wang2013robust}), among others. In this paper, we aim to consider robust estimation of time series regression allowing ultra-high dimensions and fat-tailed and correlated errors.

Vector autoregression (VAR) is another widely used linear model to describe the evolution of a set of variables over time. In recent years, there has been significant progress in estimating high-dimensional VAR models. Inspired from the development in high-dimensional linear regression, \cite{hsu2008subset}, \cite{nardi2011autoregressive} and \cite{basu2015regularized} considered the Lasso estimator using $\ell_1$ penalty. \cite{kock2015oracle} established oracle inequalities for high-dimensional vector autoregressive models. \cite{han2015direct} adopted a Dantzig-type penalization. \cite{guo2016high} proposed a Bayesian information criterion based on residual sums of least squares estimator to estimate high dimensional banded autoregression. However, most of these studies require the Gaussian assumption or the existence of finite exponential moment. In econometric analysis, \cite{sims1980macroeconomics} raised the concern that fat tails in VAR models can affect the validity of statistical inference and may lead to low degrees of freedom due to the estimation of possibly a large number of parameters. Therefore, there is a need to investigate robust estimation methods for high-dimensional fat-tailed VAR models.

In summary, our work will focus on tackling the challenges posed by high dimensional time series analysis with time series covariates, possibly correlated errors, fat tails, and ultra high dimension. This requires the development of new statistical tools that are tailored to the specific characteristics of these datasets. One of our key contributions is a novel Bernstein-type inequality for the sum of a bounded transformation of high dimensional linear processes. This inequality will be instrumental in obtaining consistent estimators under mild conditions, such as $\log p = o(n^c)$ for some $c>0$.


The paper is organized as follows. In Section 2, we introduce the framework of high dimensional linear processes and the important quantities that can characterize temporal and cross-sectional dependence. We then present a new Bernstein type inequality for high dimensional linear processes. In Section 3, we investigate two robust estimation problems: (1) time series linear regression with correlated and fat-tailed covariates and errors, and (2) autoregressive models with fat-tailed errors. We also provide some simulation results in Section 4 to assess the empirical performance of robust estimators. All proofs are relegated to the supplementary material.

We first introduce some notation. For a vector $\beta=(\beta_1,\dots,\beta_p)^\top$, let $|\beta|_1=\sum_{i}|\beta_i|$, $|\beta|_2=(\,{\sum_{i}\beta_i^2}\,)^{1/2}$, $|\beta|_0=|\{i:\beta_i\neq0\}|$ and  $|\beta|_\infty=\max_{i}|\beta_i|$. Let $\text{Supp}(\beta)$ be the support of $\beta$. For a matrix $A=(a_{ij})_{1\leq i,j\leq p}\in\mb{R}^{p\times p}$, let $\lambda_i$, $i=1,\dots,p$, be its eigenvalues and $\lambda_{\max}(A)=\max_{i}|\lambda_i|$ be the spectral radius, $\lambda_{\min}(A)=\min_{i}|\lambda_i|$. Let $\kappa(A)$ denote the condition number of $A$. Denote $|A|_1 = \sum_{i,j}|a_{ij}|$, $\|A\|_1 = \max_j \sum_i|a_{ij}|$, $\|A\|_\infty = \max_i\sum_{j} |a_{ij}|$, spectral norm $\|A\| = \|A\|_2 = \sup_{|x|_2 \neq 0}|Ax|_2/|x|_2$ and Frobenius norm $\|A\|_F = (\sum_{i,j} a_{ij}^2)^{1/2}$. Moreover, let $\text{tr}(A)$ be the trace of $A$, $\|A\|_{\max}=\max_{i,j}|a_{ij}|$ be the entry-wise maximum norm, $|A|$ be a matrix after taking absolute value of $A$, i.e. $|A|=(|a_{ij}|)_{i,j}$.  For a random variable $X$ and $q>0$, define $\|X\|_q=(\mE[|X|^q])^{1/q}$. For two real numbers $x,y$, set $x\lor y=\max(x,y)$. For two sequences of positive numbers $\{a_n\}$ and $\{b_n\}$, we write $a_n\lesssim b_n$ if there exists some constant $C>0$, such that $a_n/b_n\leq C$ as $n\to\infty$, and also write $a_n\asymp b_n$ if $a_n\lesssim b_n$ and $b_n\lesssim a_n$. We use $c_0,c_1,\dots$ and $C_0,C_1,\dots$ to denote some universal positive constants whose values may vary in different context. Throughout the paper, we consider the high dimensional regime, allowing the dimension $p$ to grow with the sample size $n$, that is, we assume $p=p_n\to\infty$ as $n\to\infty$.

\section{Bernstein-type Inequality for High Dimensional Linear Processes}
\label{sec: bernstein}

We consider a general framework of $p$-dimensional stationary linear process
\begin{equation}
\label{eq: linear process}
X_i = (X_{i1}, \ldots, X_{ip})^\top = \mu+ \sum_{k=0}^\infty A_k \boldsymbol{\varepsilon}_{i-k}
\end{equation}
where $\mu \in \mathbb{R}^p$ is the mean vector, $A_0=I_p$, $A_k$, $k\geq 1$, are $p \times p$ coefficient matrices with real entries such that $\sum_{k=0}^\infty \text{tr}(A_k^\top A_k)<\infty$,  $\boldsymbol{\varepsilon}_i=(\varepsilon_{i1}, \ldots, \varepsilon_{ip})^\top$, and $\varepsilon_{ij}$, $i\in \mathbb{Z}$, $1 \leq j \leq p$, are i.i.d.~random variables with zero mean and finite variance. Kolmogorov's three-series theorem ensures that the linear process (\ref{eq: linear process}) is well-defined. Many researchers have recently worked on this model, including \cite{bhattacharjee2014estimation, bhattacharjee2016large}, \citet{liu2015marvcenko}, and \cite{chen2016regularized}, among others. One special case of (\ref{eq: linear process}) is the stationary Gaussian process. If $A_k = 0$ for $k > d$, it becomes a vector moving average process of order $d$ (\cite{reinsel1997elements}, \cite{lutkepohl2005new}, \cite{brockwell2009time}). Another important class of models covered by (\ref{eq: linear process}) is the vector autoregressive (VAR) model, which has been widely used in economics and finance (e.g., \cite{sims1980macroeconomics}, \citet{stock2001vector}, \citet{tsay2005analysis}, \cite{fan2011sparse}).


The linear process (\ref{eq: linear process}) is a flexible multivariate model for correlated data in that the coefficient matrices $A_k$ capture both temporal and cross-sectional (spatial) dependence. Previous research has explored different structural conditions on the matrices $A_k$. For example, \cite{liu2015marvcenko} worked on a restrictive class of linear processes with matrices $A_k$ that are simultaneously diagonalizable, which implies the absence of spatial dependence among the components. \citet{bhattacharjee2016large} assumed that $\lim p^{-1}\tr(\Pi)$ exists and is finite for any polynomial $\Pi$ in $\{A_k, A_k^\top \}$, a joint convergence assumption that is difficult to verify. In this work, we shall impose a condition on the decay rate of the spectral norms of $A_k$, which allows for more general dependence structures and is easier to check in practice. Assume that there exist $0<\rho_p <1$ and $1\leq \gamma_p < \infty$ such that
\begin{equation}
\label{condition: gmc}
\|A_k\| = \sup_{|x|_2 \neq 0} \frac{|A_kx|_2}{|x|_2} \leq \gamma_p \cdot \rho_p^{k}
\end{equation}
for all $k \geq 0$. 
It implies short-range dependence in the sense that the autocovariance matrices $\text{Cov}(X_0, X_j) =\sum_{k=0}^\infty A_k A_{k+j}^\top$ are absolutely summable. 
The proposed quantities $\rho_p$ and $\gamma_p$ can capture temporal and spatial dependence of the underlying high-dimensional process. In particular, $\rho_p$ can depict the strength of temporal dependence, with smaller values indicating faster decay rates and weaker temporal dependence. And the magnitude of $\gamma_p$ can naturally quantify spatial dependence. A notable feature is that both $\gamma_p$ and $\rho_p$ may depend on $p$ in the high-dimensional regime. For example, when $p$ is large, $\rho_p$ may be a relatively large rate close to 1, indicating slow decay speed. In fact, there exists an absolute constant, independent of $p$ and strictly smaller than 1, such that (\ref{condition: gmc}) can be rephrased as
\begin{align}
\label{gmc revise}
\|A_k\| \leq \gamma_p \cdot \rho_0^{k/\tau_p} \text{ for some } \tau_p \geq 1.
\end{align}
Particularly, we define $\tau_p \equiv 1$ if there exists $\rho_0$ such that $\rho_p \leq \rho_0<1$, and $\tau_p = \log {\rho}_0/\log \rho_p$ for $\rho_0$ satisfying $0<\rho_0 \leq \rho_p$ if $\rho_p$ is large and increase with $p$. In the latter case, it could happen that $\tau:=\tau_p$ is an unbounded function in terms of the dimension $p$. It is worth noting that measures of dependence quantified by the dimension $p$ have been rarely explored in previous literature, despite their high relevance in analyzing high-dimensional time series. This feature is illustrated by the high dimensional vector autoregressive model in Example \ref{ex: VAR}. Thereafter, for notational simplicity, we omit the dimension subscript in $\gamma_p, \tau_p$, and refer them as $\gamma$, $\tau$. And we assume $\tau \leq n$; otherwise there may exist very strong temporal dependence in the sense that $\|A_k\|$ is decaying at a rate no faster than $\rho_0^{1/n}$. 

\begin{ex}[High Dimensional Vector Autoregressive Models]
\label{ex: VAR}
{\rm Consider the VAR(1) model  
\begin{equation}\label{var1}
X_i=AX_{i-1}+\boldsymbol{\varepsilon}_i,
\end{equation} 
where $A\in\mb{R}^{p\times p}$ is the transition matrix, and $\boldsymbol{\varepsilon}_i$, $i \in \mathbb{Z}$, are i.i.d.~error vectors with mean 0 and covariance matrix $I_p$. Equivalently it can be represented by the moving average model: $X_i  = \sum_{k=0}^\infty A^k \boldsymbol{\varepsilon}_{i-k}$, a special case of (\ref{eq: linear process}) with $A_k = A^k$. The process is stable (and hence stationary) if and only if the spectral radius $\lambda_{\max}(A)<1$ (\cite{lutkepohl2005new}). If $A$ is symmetric, as $\lambda_{\max}(A) = \|A\|$, condition (\ref{condition: gmc}) can be easily verified with $\rho_p = \lambda_{\max}(A)$ and $\gamma = 1$. For asymmetric $A$, it has a better interpretation when we look into condition (\ref{gmc revise}), and it could happen that $\tau$ may increase with the dimension $p$. Consider the design $A = (a_{ij})_{i,j=1}^p$ with $a_{ij} = \lambda^{j-i+1}{\bf 1}\{0 \leq j-i \leq B-1\}$ for some $0<\lambda<1$ and $1\leq B\leq p$. Here $B$ depicts how many variables at most in $X_{i-1}$ that have spatial effect on $X_{ij}$. Figure \ref{f1} delivers the plot of $\|A^k\|$ under the numerical setup $\lambda = 0.55$, $B=3,4$ and $p=20,25,30$. As can be seen, $\|A^k\|$ decays truly after a certain lag that is moving forward as $p$ is getting larger. This lag can be defined as $\tau$ in condition (\ref{gmc revise}), so $\tau$ increases with $p$ in this design. Additionally the geometric decay (its existence is to be shown later) occurs at a slow speed, viewed as another evidence of large $\rho_p$ (or large $\tau$ equivalently). For example, when $B=3$, $p=30$, $\|A^k\|$ roughly 
drops from 1.35 to 0.1 over a broad lag range from 30 to 60. The peak of $\|A^k\|$ before decay is defined as $\gamma$, indicating the strength of spatial dependence; by comparing the two plots, we can tell that stronger spatial dependence with a larger $B$ results to larger $\gamma$.}
\begin{figure}
    \centering
	\includegraphics[width=0.95\textwidth]{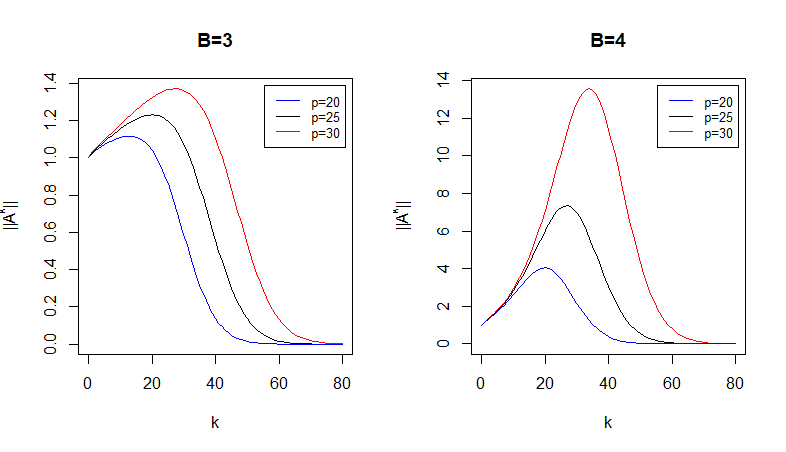}\\ \vspace{-0.7 cm}
	\caption{The graph of $\|A^k\|$ for $B=3, 4$ and $p=20,25,30$.}
	\label{f1}
\end{figure}
\end{ex}
Concentration inequalities play an important role in the study of sums of random variables. A number of inequalities have been derived for independent random variables; see \cite{buhlmann2011statistics} for a review. Bernstein's inequality (\cite{bernstein1946theory}) is one of the powerful tools when analyzing the concentration behavior by providing an exponential inequality for sums of independent random variables which are uniformly bounded. To fix the idea, let $Y_1,\dots,Y_n$ be i.i.d.~random variables such that $\mE Y_i=0$, $\text{Var}(Y_i)=\sigma^2<\infty$, and $|Y_i|\leq M$ for all $i$. Then for any $x>0$, one has
\begin{equation}\label{bernstein1946}
	\mP\Big(\sum_{i=1}^nY_i\geq x\Big)\leq\exp\Big\{-\frac{x^2}{2n\sigma^2+2Mx/3}\Big\},
\end{equation}
which suggests two types of bound for tail probability: sub-Gaussian-type tail $\exp\{-x^2/(Cn\sigma^2)\}$ in terms of the variance of $\sum_{i=1}^nY_i$ and sub-exponential-type tail $\exp\{-x/(CM)\}$ in terms of the uniform bound $M$. Bernstein type inequalities have been developed for Markov chains or temporally dependent processes with an additional order ($\log n$ in some constant powers) in the sub-exponential-type tail; see, for example, \cite{adamczak2008tail}, \cite{merlevede2009bernstein}, \cite{hang2017bernstein} and \cite{zhang2019robust}. The problem of generalizing to high dimensional time series is quite challenging and very few results have been obtained. Our first goal is to establish a new Bernstein type inequality for the sum of a bounded transformation of the high dimensional linear processes in (\ref{eq: linear process}).

\begin{thm}\label{thmbern}
Let $X_i$ be the linear process generated from (\ref{eq: linear process}) with $\mE \varepsilon_{ij}=0$, $\mE \varepsilon_{ij}^2 =\sigma^2 <\infty$ and condition (\ref{gmc revise}) be satisfied. Let $G:\mb{R}^p\to\mb{R}$ be a function with $|G(u)|\leq M$ for all $u\in\mb{R}^p$. Suppose there exists a vector $g = (g_1, \ldots, g_p)^\top$ with $g_i\geq0$ and $\sum_{i=1}^p g_i=1$ such that the following Lipschitz condition holds: for all $u = (u_1, \ldots, u_p)^\top$ and $v=(v_1, \ldots, v_p)^\top$,
		 \begin{equation}\label{lips}
		 	|G(u)-G(v)|\leq \sum_{i=1}^p g_i|u_i-v_i|.
		 \end{equation}
		 Then for any $x>0$, we have
		 \begin{align}\label{thmb}\mP\Big(\sum_{i=1}^nG(X_i)-\mE G(X_i)\geq x\Big)&\leq 2\exp\bigg\{-\frac{x^2}{C_1 n\sigma^2\tau^2\gamma^2+C_2\tau Mx}\bigg\},\end{align}
	where the constants $C_1$ and $C_2$ are given by
	\begin{align*}C_1=\frac{16\me^2}{\sqrt {2\pi}\rho_0^4[\log(1/\rho_0)]^3},\quad
	 C_2&=\frac{8\me }{\log(1/\rho_0)}.\end{align*}
\end{thm}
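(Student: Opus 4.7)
The plan is to rewrite $S_n := \sum_{i=1}^n (G(X_i) - \mE G(X_i))$ as a martingale with respect to the natural filtration of the innovations, to control each martingale difference via the Lipschitz condition (\ref{lips}) together with the spectral-norm decay (\ref{gmc revise}), and then to invoke a Bernstein/Freedman-style inequality for bounded martingales, handling the infinite tail of dependence by a truncation step.

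Specifically, set $\mF_j = \sigma(\varepsilon_k : k \le j)$ and $\mathcal{P}_j = \mE[\,\cdot\,\mid\mF_j] - \mE[\,\cdot\,\mid\mF_{j-1}]$. Telescoping orthogonal projections gives $S_n = \sum_{j \le n} M_j$ with $M_j = \sum_{i=(j \vee 1)}^n \mathcal{P}_j G(X_i)$, a martingale difference sequence. Introducing the coupled variable $X_i^{*(j)}$---identical to $X_i$ except that $\varepsilon_j$ is replaced by an independent copy $\varepsilon_j'$---I would use the identity $\mathcal{P}_j G(X_i) = \mE[G(X_i) - G(X_i^{*(j)})\mid\mF_j]$ together with (\ref{lips}) to obtain
\[
|G(X_i) - G(X_i^{*(j)})| \le \sum_{\ell=1}^p g_\ell \bigl|[A_{i-j}(\varepsilon_j - \varepsilon_j')]_\ell\bigr|.
\]
Squaring and applying Cauchy--Schwarz with $\sum_\ell g_\ell = 1$---the step that prevents a factor of $\sqrt{p}$ from appearing---together with the row-norm inequality $\|(A_{i-j})_\ell\|_2 \le \|A_{i-j}\|$, produces the pair of estimates $|\mathcal{P}_j G(X_i)| \le 2M$ pointwise and $\|\mathcal{P}_j G(X_i)\|_2 \le \sqrt{2}\,\sigma \gamma \rho_0^{(i-j)/\tau}$ in $L^2$.

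Next I would truncate: write $M_j = M_j' + M_j''$, where $M_j'$ collects the terms with $0 \le i-j < L$ and $M_j''$ collects the tail $i - j \ge L$, with depth $L$ proportional to $\tau$ (up to a logarithmic factor absorbed into the absolute constants). Then $|M_j'| \le 2ML$ a.s., while Minkowski together with the geometric $L^2$ decay gives $\|M_j''\|_2 \lesssim \sigma \gamma \tau\, \rho_0^{L/\tau}/\log(1/\rho_0)$, so that $\sum_{j=1}^n M_j''$ is controlled in $L^2$ and dispatched by Chebyshev's inequality. For the bounded piece $\sum_j M_j'$, orthogonality of martingale differences yields $\sum_j \mE[|M_j'|^2 \mid \mF_{j-1}] \lesssim n\sigma^2\gamma^2\tau^2/[\log(1/\rho_0)]^2$ in expectation. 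Feeding these bounds into Freedman's (or Bennett's) inequality and then optimizing $L$ produces a tail of the claimed form $\exp\{-x^2/[C_1 n \sigma^2 \gamma^2 \tau^2 + C_2 \tau M x]\}$.

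The main obstacle is two-fold. First, Freedman's inequality requires the predictable quadratic variation $\sum_j \mE[|M_j'|^2 \mid \mF_{j-1}]$ as a random variable rather than merely its expectation, so the unconditional $L^2$ bound must be upgraded to a high-probability bound---typically by a self-bounding argument re-applied to the squared martingale or by a multi-level truncation. Second, tracking constants carefully through the Bernstein moment generating function computation is needed to recover exactly the factors $16\me^2/(\sqrt{2\pi}\rho_0^4[\log(1/\rho_0)]^3)$ and $8\me/\log(1/\rho_0)$; the appearance of $\sqrt{2\pi}$ strongly suggests that a Gaussian-type integral enters at the MGF-bounding stage, while the power $\rho_0^4$ likely arises from the optimal choice of the truncation depth $L$.
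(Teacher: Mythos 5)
Your skeleton (projection/martingale decomposition $S_n=\sum_{j\le n}L_j$ with $L_j=\sum_{i\ge 1\vee j}\mathcal{P}_jG(X_i)$, the coupling with $\boldsymbol{\varepsilon}_j'$, and the Lipschitz bound $|\mathcal{P}_jG(X_i)|\le \min\{g^\top|A_{i-j}|\,\mE[|\boldsymbol{\varepsilon}_j-\boldsymbol{\varepsilon}_j'|\mid\mF_j],\,2M\}$) is exactly how the paper starts, but the middle of your argument has a genuine gap. Your truncation at lag $L$ plus Chebyshev for the tail $\sum_j M_j''$ cannot produce the claimed bound: with $L\asymp\tau$ the tail's $L^2$ norm is of the same order $n\sigma^2\gamma^2\tau^2$ as the main variance, so Chebyshev gives only a polynomial tail, which is not dominated by $\exp\{-x^2/(C_1n\sigma^2\gamma^2\tau^2+C_2\tau Mx)\}$; and if instead you inflate $L$ by a logarithmic factor to make the tail negligible, that factor enters the almost-sure bound $|M_j'|\le 2ML$ and hence the sub-exponential term becomes $CM\tau(\log n)x$ --- precisely the extra $(\log n)$ factor (as in Merlev\`ede et al.) that this theorem is designed to remove. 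A log factor cannot be ``absorbed into the absolute constants.''

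The second issue is that the obstacle you flag for Freedman's inequality is in fact a non-obstacle, and recognizing this is the key to avoiding truncation altogether. The envelope of $L_j$ obtained from the Lipschitz bound is a function of $(\boldsymbol{\varepsilon}_j,\boldsymbol{\varepsilon}_j')$ alone, which is independent of $\mF_{j-1}$; hence all conditional moments $\mE[|L_j|^k\mid\mF_{j-1}]$ are bounded by \emph{deterministic} quantities, and no high-probability control of a random quadratic variation (self-bounding, multi-level truncation) is needed. The paper exploits exactly this: it keeps the termwise minimum with $2M$ inside $L_j$, splits each $L_j$ into an indicator piece ($b_{i-j}^\top\eta_j\ge 2M$) and a small piece, and interpolates to get the Bernstein-type moment bound $\mE[|L_j|^k\mid\mF_{j-1}]\le C\,k!\,\tau^k C'^k M^{k-2}\gamma^2\sigma^2\rho_0^{-2j/\tau}$ for all $k\ge 2$, which feeds directly into a conditional MGF/Chernoff argument (summing the Taylor series, recursively conditioning), with the far past $j\le 0$ controlled by the geometric decay in $|j|$ and $\tau\le n$. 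This yields the stated constants with no truncation depth to optimize; incidentally, the $\sqrt{2\pi}$ comes from Stirling's formula in the moment computation, not from a Gaussian integral, and the $\rho_0^{4}$ from elementary relaxations of $1-\rho_0^{2/(k\tau)}$, not from an optimal $L$. As written, your proposal does not close without replacing the Chebyshev step and carrying out the conditional moment/MGF computation.
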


\begin{rem}
Equipped with our new inequality \eqref{thmb}, one can investigate the concentration properties of sums of bounded transformations of high dimensional linear processes that exhibit both temporal and cross-sectional dependence, characterized by $\tau$ and $\gamma$ respectively. In the special case where the processes are one-dimensional, denoted by $X_i\in \mathbb{R}$, and $\tau=1$ and $\gamma$ is of a constant order that satisfies condition (\ref{condition: gmc}), our probability inequality \eqref{thmb} is just as sharp as the classical Bernstein's inequality \eqref{bernstein1946}. It is worth mentioning that our inequality is strictly sharper than the existing Bernstein-type inequalities for univariate time series established by \cite{merlevede2009bernstein} and \cite{zhang2019robust}. To fix the idea, we shall recall that \cite{merlevede2009bernstein} derived a concentration inequality for univariate strong mixing process $(X_i)$ of mean 0 and upper bounded by $M$ in magnitude:
	\begin{align}\label{m2009}
		\mP\Big(\sum_{i=1}^n X_i\geq x\Big)\leq\exp\Big\{-\frac{Cx^2}{nv^2+M^2+M(\log n)^2x}\Big\},
	\end{align}
	where $v^2$ is the asymptotic variance of $\sum_{i=1}^nX_i/\sqrt n$. \cite{zhang2019robust} obtained a similar bound with $v^2$ represented in terms of functional dependence measures. In our framework of linear processes with condition (\ref{condition: gmc}) satisfied, $v^2 \asymp \sigma^2 \gamma^2$ can be computed for one-dimensional cases. Notably, our inequality is sharper by removing the additional factor $(\log n)^2$ in the sub-exponential type bound.
\end{rem}

To study high dimensional time series, an important class of transformations is linear combinations of transformed component processes, that is, $G(X_i) = \sum_{j=1}^p a_j h_j(X_{ij})$, where $\sum_{j=1}^n |a_j|=1$, $h_j: \mathbb{R} \to \mathbb{R}$ are univariate functions satisfying $|h_j(x)|\leq M$ and $|h_j(x)-h_j(y)|\leq 1$ for any $x, y \in \mathbb{R}$, thus condition \eqref{lips} is satisfied with $g_j=|a_j|$. As a special case, when $G(X_i) = h_j(X_{ij})$, for a fixed $1\leq j \leq p$, the result provides a concentration inequality for sums of each component process $(X_{ij})_{i \in \mathbb{Z}}$ after the transformation $h_j$. This is useful in the application of estimating the mean vector of high-dimensional linear processes in a robust way, as discussed at the end of this section. In Remark 2.3, we highlight that our inequality yields a rate of $\ell_\infty$ norm convergence for the robust mean estimator that is as sharp as the optimal rate for i.i.d.~processes.


Condition (\ref{gmc revise}) requires $\|A_k\|$ geometrically decayed  up to the quantity $\gamma$ and the decay speed is controlled by $\tau$. \citet{chen2016regularized} worked on the same linear model under a weaker condition allowing polynomial decay, namely, $\|A_k\|=O((1 \vee k)^{-\alpha})$ for some $\alpha>1$, under which, it is noteworthy that an exponential type probability inequality does not hold in general even if it is one dimensional process with a uniform bound. 
That is to say, if we relax the condition (\ref{condition: gmc}) to a polynomial decay, the concentration inequality delivers an exact rate with algebraic decay for one dimensional linear process; see Theorem 14 in \citet{chen2018concentration}.

In Theorem \ref{thmbern}, the existence of a finite variance of $\varepsilon_{ij}$ is assumed. If it is relaxed to the existence of finite exponential moment, a similar bound can be achieved with $G$ not necessarily bounded; see Theorem \ref{thmbern2} below. 

\begin{thm}
\label{thmbern2}
In the model (\ref{eq: linear process}), assume that $\mE \varepsilon_{ij}=0$, $\mE \exp(c_0|\varepsilon_{ij}|) = \theta <\infty$ for some constant $c_0>0$ and condition (\ref{gmc revise}) is met. Then for $G$ satisfying (\ref{lips}), it holds that
		 \begin{align}\label{bern2}
		 \mP\Big(\sum_{i=1}^nG(X_i)-\mE G(X_i)\geq x\Big)&\leq 2\exp\bigg\{-\frac{x^2}{C_3 n\theta^2\tau^2\gamma^2+ C_4 \gamma \tau x }\bigg\},\end{align}
	where the constants $C_3$ and $C_4$ depend on $\rho_0$ and $c_0$.
\end{thm}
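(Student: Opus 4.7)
The plan is to recycle the martingale-difference decomposition underlying Theorem \ref{thmbern} while replacing the uniform bound $|G|\leq M$ with a sub-exponential MGF estimate supplied by $\mE\exp(c_0|\varepsilon_{jl}|)\leq\theta$. Set $\mF_j=\sigma(\boldsymbol{\varepsilon}_\ell:\ell\leq j)$ and $\mathcal{P}_j Y=\mE[Y|\mF_j]-\mE[Y|\mF_{j-1}]$, so that
\[
 S_n:=\sum_{i=1}^n (G(X_i)-\mE G(X_i))=\sum_{j\leq n}D_j,\qquad D_j:=\sum_{i=j\vee 1}^n \mathcal{P}_j G(X_i)
\]
is a sum of $(\mF_j)$-martingale differences. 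Introducing an independent copy $\boldsymbol{\varepsilon}_j^*$ and the perturbed process $X_i^{*,j}:=X_i-A_{i-j}(\boldsymbol{\varepsilon}_j-\boldsymbol{\varepsilon}_j^*)$ yields the coupling identity $\mathcal{P}_j G(X_i)=\mE[G(X_i)-G(X_i^{*,j})\mid\mF_j]$. Conditional on $\mF_{j-1}$, one can therefore write $D_j=h_j(\boldsymbol{\varepsilon}_j)$ for an $\mF_{j-1}$-measurable function $h_j$ with $\mE_{\boldsymbol{\varepsilon}_j}h_j=0$, and (\ref{lips}) combined with $X_i(y)-X_i(y')=A_{i-j}(y-y')$ gives the weighted-$\ell^1$ Lipschitz estimate
\[
 |h_j(y)-h_j(y')|\leq \sum_{l=1}^p c_{j,l}|y_l-y_l'|,\qquad c_{j,l}:=\sum_{i=j\vee 1}^n \sum_{k=1}^p g_k|(A_{i-j})_{kl}|.
\]

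The crux, and the main technical obstacle, is controlling the constants $(c_{j,l})$ by $\gamma$ and $\tau$ alone, without a spurious factor of $p$. Using $g\geq 0$, $\sum_k g_k=1$, and $|(A_{k'})_{kl}|\leq \|A_{k'}\|$, a simple averaging produces the entrywise bound $(|A_{k'}|^\top g)_l\leq \|A_{k'}\|$, whence $\max_l c_{j,l}\leq \sum_{k'\geq 0}\|A_{k'}\|\lesssim \gamma\tau$ via (\ref{gmc revise}). For the $\ell^2$ mass, Cauchy--Schwarz with weights $g_k$ gives $\||A_{k'}|^\top g\|_2^2\leq \sum_k g_k\sum_l|(A_{k'})_{kl}|^2\leq \|A_{k'}\|^2$, and a second weighted Cauchy--Schwarz over $k'$ with weights $\rho_0^{k'/\tau}$ followed by a geometric sum yields $\sum_l c_{j,l}^2\lesssim \gamma^2\tau^2$. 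The delicate point is that the naive detour $\|\,|A_{k'}|\,\|_2\leq \sqrt p\,\|A_{k'}\|_2$ must be avoided; the $\ell^1$-normalization $\sum_k g_k=1$ is exactly the structural ingredient that reduces weighted column statistics of $|A_{k'}|$ to $\|A_{k'}\|$ through pointwise averages of entries or rows.

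With these deterministic Lipschitz bounds in hand, I would next derive a sub-exponential MGF for $D_j$. Taylor expansion of $\mE\exp(c_0|\varepsilon_{jl}|)\leq \theta$ gives $\mE|\varepsilon_{jl}|^m\leq m!\theta/c_0^m$ and in turn the Gaussian-like estimate $\mE\exp(\lambda\varepsilon_{jl})\leq \exp(C\theta^2\lambda^2/c_0^2)$ for $|\lambda|\leq c_0/2$. A secondary projection martingale of $h_j(\boldsymbol{\varepsilon}_j)-\mE h_j$ across the independent coordinates $\varepsilon_{j1},\dots,\varepsilon_{jp}$ with per-coordinate Lipschitz constants $c_{j,l}$ then produces
\[
 \mE[\exp(\lambda D_j)\mid\mF_{j-1}]\leq \exp\Bigl(C\theta^2\lambda^2\textstyle\sum_l c_{j,l}^2\Bigr)\leq \exp(C'\theta^2\gamma^2\tau^2\lambda^2),\qquad |\lambda|\leq c/(\gamma\tau).
\]
Iterating the tower property over $j=1,\dots,n$ gives $\mE\exp(\lambda S_n)\leq \exp(C'n\theta^2\gamma^2\tau^2\lambda^2)$ in the same range of $\lambda$, and the usual Chernoff--Markov optimization delivers the Bernstein-type tail in \eqref{bern2}. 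The symmetric argument applied to $-G$, together with the union bound, accounts for the factor of $2$. The resulting constants $C_3,C_4$ depend on $\rho_0$ through the geometric-sum factor $1/\log(1/\rho_0)$ and on $c_0$ through the sub-exponential MGF conversion, as stated.
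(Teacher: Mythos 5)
Your proposal is correct in substance and shares the paper's outer skeleton (the projection/martingale decomposition $D_j=\sum_{i\geq 1\vee j}P_jG(X_i)$, a conditional MGF bound per increment valid only for $\lambda\lesssim 1/(\gamma\tau)$, then Chernoff), but the key step is executed differently. The paper dominates each increment by the scalar $|L_j|\leq d_j^\top\eta_j$ with $\eta_j=\mE[|\boldsymbol{\varepsilon}_j-\boldsymbol{\varepsilon}_j'|\,|\,\mF_j]$, verifies $\mE e^{Y_j}\leq\theta^2$ at a single scale $\lambda^*\asymp c_0/(\gamma\tau)$ via the $\ell_1$ bound on $d_j$, and converts this into a quadratic bound for all $0<\lambda\leq\lambda^*$ through a Markov/tail-integration argument combined with the monotonicity of $(e^{\lambda x}-\lambda x-1)/\lambda^2$; you instead run a second, coordinate-wise Doob martingale inside each innovation vector $\boldsymbol{\varepsilon}_j$, using the per-coordinate Lipschitz constants $c_{j,l}$ together with Bernstein-type moment bounds from $\mE e^{c_0|\varepsilon_{jl}|}=\theta$, which yields the variance proxy $\theta^2\sum_l c_{j,l}^2\lesssim\theta^2\gamma^2\tau^2$ directly. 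Your observation that the normalization $\sum_k g_k=1$ lets you bound $\max_l c_{j,l}$ and $\sum_l c_{j,l}^2$ by $\gamma\tau$ and $\gamma^2\tau^2$ without any dimension factor is sound (entrywise and row-norm bounds by $\|A_{k'}\|$ do the work), and in fact your route sidesteps the $\ell_1$-norm bound on $d_j$ that the paper relies on. Two small repairs: your final iteration must also include the past indices $j\leq 0$ (your $S_n$ sums over all $j\leq n$); for these the same argument applies with $\sum_l c_{j,l}^2\lesssim\gamma^2\tau^2\rho_0^{2(1-j)/\tau}$, contributing an extra $O(\gamma^2\tau^3\theta^2)$ to the variance proxy, which is absorbed into $Cn\gamma^2\tau^2\theta^2$ only via the standing assumption $\tau\leq n$ (exactly as in the paper's split at $j=-\tau$). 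Also, the factor $2$ in \eqref{bern2} does not come from a union bound over $\pm G$ (the claim is one-sided); in the paper it arises from splitting past and present indices, while in your unified MGF treatment it is simply harmless slack.
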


One immediate application of Theorem \ref{thmbern} is to estimate the mean vector for  high dimensional fat-tailed linear processes. From an $M$-estimation viewpoint, we apply Huber's estimator (\cite{huber1964robust}) of the mean vector, denoted by $\hat{\mu}=(\hat{\mu}_1,\ldots, \hat{\mu}_p)^\top$, with $\hat{\mu}_j$ as the solution of $a$ to the equation 
\begin{equation*}\sum_{i=1}^n \phi_{\nu}(X_{ij}-a) = 0,
\end{equation*}
where $\phi_{\nu}(x)=(x\wedge \nu)\vee(-\nu)$ is the Huber function with the robustification parameter $\nu > 0$. 

\begin{thm}
\label{thm: mean}
Let $X_i$ be generated from model (\ref{eq: linear process}) with $\mE \varepsilon_{ij}=0$, $\text{Var}(\varepsilon_{ij})=1$, $\mu = \mE X_{i}$ and $\max_{1\leq j\leq p}\text{Var}(X_{ij})=\mu_2^2 <\infty$. 
Choose $\nu\asymp \mu_2\sqrt{{n}/{\log p}}$.
With probability at least $1-4p^{-c}$ for some $c>0$, it holds that
\begin{equation}
\label{eq: mean estimation}
|\hat{\mu}-\mu|_\infty \leq C(\gamma + \mu_2)\tau\sqrt{\frac{\log p}{n}}
\end{equation}
under the scaling condition $(\gamma+\mu_2)\tau \sqrt{\log p/n} \to 0$, where $C$ is a positive constant depending on $c$ and the constants $C_1, C_2$ in Theorem \ref{thmbern}.
\end{thm}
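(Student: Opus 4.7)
The plan is to bound $|\hat\mu_j - \mu_j|$ coordinate-wise via the Bernstein inequality (Theorem \ref{thmbern}) and then take a union bound over $j = 1, \ldots, p$. Exploiting the fact that $a \mapsto \sum_i \phi_\nu(X_{ij}-a)$ is non-increasing in $a$, one has the standard sandwich
\[
\{\hat\mu_j - \mu_j > t\} \subseteq \Big\{\sum_{i=1}^n \phi_\nu(X_{ij} - \mu_j - t) \geq 0\Big\},
\]
and symmetrically $\{\hat\mu_j - \mu_j < -t\} \subseteq \{\sum_i \phi_\nu(X_{ij} - \mu_j + t) \leq 0\}$. It therefore suffices to control the centered sum for the target threshold $t := C_0(\gamma+\mu_2)\tau \sqrt{\log p/n}$, with $C_0$ chosen at the end.

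First I would set $G(u) = \phi_\nu(u_j - \mu_j - t)$, which depends only on the $j$-th coordinate. Then $|G| \leq \nu$, and since $\phi_\nu$ is $1$-Lipschitz, $G$ satisfies the Lipschitz hypothesis \eqref{lips} with $g_j = 1$ and $g_k = 0$ for $k \neq j$. Next I would control the bias of the Huber score. Writing $Z := X_{1j} - \mu_j - t$, the identity $\phi_\nu(Z) = Z - (Z - \phi_\nu(Z))\mathbf{1}_{\{|Z| > \nu\}}$, the bound $|Z - \phi_\nu(Z)| \leq |Z|$, and Markov's inequality give
\[
\big|\mE G(X_1) + t\big| \;=\; \big|\mE[(Z - \phi_\nu(Z))\mathbf{1}_{\{|Z|>\nu\}}]\big| \;\leq\; \frac{\mE Z^2}{\nu}.
\]
Under the scaling condition $t \to 0$, $\mE Z^2 \leq 2\mu_2^2 + 2t^2 \lesssim \mu_2^2$, so with the choice $\nu \asymp \mu_2 \sqrt{n/\log p}$ the bias is $\lesssim \mu_2 \sqrt{\log p/n}$. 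Taking $C_0$ large enough ensures $\mE G(X_1) \leq -t/2$, so the sandwich event is contained in $\{\sum_i G(X_i) - n\mE G(X_i) \geq nt/2\}$.

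Applying Theorem \ref{thmbern} with $M = \nu$, $\sigma^2 = 1$, and $x = nt/2$ then yields
\[
\mP\Big(\sum_{i=1}^n G(X_i) - n\mE G(X_i) \geq nt/2\Big) \leq 2\exp\bigg\{-\frac{(nt/2)^2}{C_1 n \tau^2 \gamma^2 + C_2 \tau \nu (nt/2)}\bigg\}.
\]
With the choices above, the variance piece of the denominator contributes an exponent $\asymp nt^2/(\tau^2\gamma^2)$, which is $\asymp \log p$ thanks to the $\gamma\tau$ part of $t$, while the sub-exponential piece contributes $nt/(\tau\nu) \asymp n \cdot \mu_2 \tau \sqrt{\log p/n} /(\tau \mu_2 \sqrt{n/\log p}) = \log p$ from the $\mu_2\tau$ part. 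Choosing $C_0$ large enough makes each tail at most $p^{-c-1}$; union-bounding over the $2p$ pairs (coordinate $j$ and sign $\pm$) delivers the claimed $4p^{-c}$ probability.

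The main obstacle is the interplay between the bias of the Huber score and the robustification parameter $\nu$: $\nu$ must be large enough that the bias $\mE Z^2/\nu$ can be absorbed into $t/2$, yet small enough that the sub-exponential term $C_2 \tau \nu (nt/2)$ in the Bernstein denominator still produces an exponent of order $\log p$. The choice $\nu \asymp \mu_2\sqrt{n/\log p}$ is precisely the value that balances these two requirements and yields the final rate $(\gamma + \mu_2)\tau\sqrt{\log p/n}$. Everything else is a direct application of Theorem \ref{thmbern} together with the standard monotonicity reduction for $M$-estimators defined by monotone score functions.
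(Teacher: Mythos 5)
Your proposal is correct and follows essentially the same route as the paper's proof: a monotonicity reduction of the event $\{\hat\mu_j-\mu_j>t\}$ to a one-sided deviation of the centered Huber score, a bias bound of order $\mu_2^2/\nu$ absorbed into $t/2$, an application of Theorem \ref{thmbern} with $M=\nu$, $\sigma^2=1$ and Lipschitz vector $g=e_j$, and a union bound over coordinates and signs giving $4p^{-c}$. The only differences are presentational (the paper quotes the reduction from \citet{zhang2019robust} and writes the bias as $4\nu^{-1}\mu_2^2$ under $\nu^{-1}\delta\le 1/2$, while you bound it directly by $\mE Z^2/\nu$), so no gap to report.
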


\begin{rem}
Theorem \ref{thm: mean} delivers the rate of $\ell_\infty$ norm convergence for the robust mean estimator $\hat{\mu}$ and it involves a delicate interplay with the cross-sectional dependence, temporal dependence and the variance of the process. If $\gamma, \mu_2$ and $\tau$ are all of a constant order, it follows that 
\begin{equation}
\label{rate mean}
|\hat{\mu} - \mu|_\infty = O_{\mP}(\sqrt{\log p / n}),
\end{equation}
under the scaling condition $\log p/n \to 0$. We shall remark that (\ref{rate mean}) is as sharp as the optimal rate provided in Theorem 5 of \citet{fan2017estimation} concerning the concentration of the mean estimation for the i.i.d.~case. And it is strictly sharper than the results using existing Bernstein type inequalities for time series such as the ones in \cite{merlevede2009bernstein}, \cite{hang2017bernstein} and \cite{zhang2019robust}.
\end{rem}

\section{Robust Estimation of Time Series Regression}
\label{sec: robust estimation}
In this section, we shall investigate robust estimation of high dimensional time series linear regression and autoregression with fat-tailed covariates and errors. It is expected that our framework of high dimensional linear processes and these Bernstein type inequalities will be useful in other high-dimensional estimation and inference problems that involve dependent and non-sub-Gaussian random variables.

\subsection{Estimating Time Series Regression with Correlated Errors}
\label{sec: linear regression}
We work on linear regression models with random design that involve time dependent covariates and errors:
\begin{equation}
\label{label: linear}
Y_i = X_i^\top \beta^* + \xi_i,
\end{equation}
with more justification provided as follows.

\textbf{Assumptions.}
\begin{itemize}
\item[(A1)] $X_i$ is generated from the  $p$-dimensional linear process $X_i = \sum_{k=0}^\infty A_k\boldsymbol{\varepsilon}_{i-k}$ where the components of $\boldsymbol{\varepsilon}_i$ are i.i.d.~random variables with $\mE(\varepsilon_{ij})=0$  and $\text{Var}(\varepsilon_{ij})=\sigma_{\varepsilon}^2 <\infty$. Condition (\ref{gmc revise}) is satisfied with $\gamma$ and $\tau$, which may depend on $p$.
\item[(A2)] $\xi_i = \sum_{k=0}^\infty b_k \eta_{i-k}$ is the error process, where $\eta_{i}$ are i.i.d.~random variables with $\mE(\eta_{i})=0$ and $\text{Var}(\eta_{i})=\sigma_{\eta}^2<\infty$, and $b_k \leq C\rho^{k}$ for universal constants $0<\rho<1$ and $C<\infty$. 
\item[(A3)] $X_i$ is strictly exogenous in the sense that $(\boldsymbol{\varepsilon}_i)_i$ are independent of $(\eta_i)_i$, where $(\boldsymbol{\varepsilon}_i)_i$ and $(\eta_i)_i$ are error processes of $X_i$ and $\xi_i$ respectively as defined in (A1) and (A2). 
\end{itemize}
The framework (\ref{label: linear}) is quite general as the linear process includes a wide range of commonly used time series models. For linear regression models with dependent errors, earlier work mainly dealt with fixed design or i.i.d.~covariates. \cite{wang2007regression} and \cite{yoon2013penalized} considered the case where $\xi_i$ follows an autoregressive process, one type of linear processes.  \cite{gupta2012note} concerned weakly dependent $\xi_i$ introduced by \cite{doukhan1999new} and specifically discussed the AR(1) and ARMA cases. \cite{alfons2013sparse} adopted the same format of moving average errors but assumed long memory dependence. More generally, \cite{wu2016performance} and \cite{chernozhukov2021lasso} considered the nonlinear Wold representation with $X_i = g(\ldots, \boldsymbol{\varepsilon}_{i-1}, \boldsymbol{\varepsilon}_{i})$ and $\xi_i = h(\ldots, \eta_{i-1}, \eta_{i})$. 

We form a modified $\ell_1$-regularized Huber estimator of $\beta$, given by
\begin{equation*}
\hat{\beta} = \text{arg min}_{\beta\in\mb{R}^{p}} \frac{1}{n}\sum_{i=1}^n \Phi_{\nu}((Y_i-X_i^\top \beta)w(X_i)) + \lambda |\beta|_1,
\end{equation*}
where  $\Phi_{\nu}$ is Huber loss function (\cite{huber1964robust}) 
\begin{equation*}
\Phi_{\nu} (x) 
=\left\{
\begin{aligned}
&x^2/2, &\text{if } |x| \leq \nu,\\
&\nu|x|-\nu^2/2, &\text{if } |x| > \nu,
\end{aligned}
\right.
\end{equation*}
defined with respect to the robustification parameter $\nu >0$. More properties of Huber regression are referred to \cite{huber1973robust}, \cite{yohai1979asymptotic}, 
\cite{mammen1989asymptotics}, \cite{sun2020adaptive}, \cite{fan2017estimation} among others. Motivated by \cite{loh2021scale}, $w(x): \mathbb{R}^p \to \mathbb{R}$ is a weight function defined by $$w(x) = \min\Big\{1, \frac{b}{|Bx|_2}\Big\}$$ 
where $b \in \mathbb{R}$ is a fixed constant and $B\in \mathbb{R}^{p\times p}$ is a provided positive definite matrix. 
With such a choice of $w(x)$, it always holds that $|w(x)x|_2\leq b/\lambda_{\min}(B)=:b_0$. Different from the regular Huber regression concerning well-behaved $X_i$ (e.g., Gaussian or sub-Gaussian), an additional weight function is incorporated on the covariate process to account for the fat tails of $X_i$. In Section S1, we conduct a simulation study for robust time series regression estimation and look into the effect of $w(x)$. 

As a popular convention, $\beta^*$ is assumed to be sparse in the sense that $|\beta^*|_0=s$. Denote the condition number of $B$ as $\kappa(B)=\lambda_{\max}(B)/\lambda_{\min}(B)$. Theorem \ref{thm: linear reg} below concerns the estimation consistency of $\hat{\beta}$. 

\begin{thm}
\label{thm: linear reg} Let Assumptions (A1) (A2) (A3) be satisfied. Assume
\begin{equation}
\label{scaling condition}
b_0 (b_0+\kappa(B)\gamma\sigma_{\varepsilon})\tau\sqrt s\sqrt{(\log p)^3/n} \to 0.
\end{equation}
Choose $\nu \asymp \sigma_{\eta}(n/\log p)^{1/2}$ and $\lambda  \asymp b_0 \sigma_{\eta} (\log p/n)^{1/2}$. 
With probability at least $1-8p^{-c}$ for some $c>0$,  it holds that that
\begin{equation}
\label{eq: rate}|\widehat{\beta}-\beta|_2 \leq C \frac{b_0\sigma_{\eta}}{\lambda_{\min}(\mE[\frac{w^2(X_i)}2X_iX_i^\top])}\sqrt{\frac{s\log p}{n}}.
\end{equation}
\end{thm}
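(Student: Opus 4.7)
The plan is to follow the standard regularized $M$-estimation template for the weighted Huber loss $L_n(\beta) = n^{-1}\sum_{i=1}^n \Phi_\nu((Y_i - X_i^\top \beta) w(X_i))$: combine (i) a high-probability bound on $|\nabla L_n(\beta^*)|_\infty$ that justifies the choice of $\lambda$, (ii) a restricted strong convexity (RSC) lower bound for the Bregman remainder of $L_n$ on the usual cone $\{|\Delta_{S^c}|_1 \leq 3|\Delta_S|_1\}$, and (iii) the basic inequality from the optimality of $\hat\beta$. The crucial structural fact is that the weight $w$ is designed precisely so that $|w(X_i) X_i|_2 \leq b_0$; this turns both the Huber score and the quadratic curvature term into uniformly bounded quantities, which is exactly what Theorem \ref{thmbern} is tuned for. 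Throughout I treat the pair $\tilde X_i = (X_i^\top, \xi_i)^\top$ as a $(p+1)$-dimensional linear process driven by the joint innovation $(\boldsymbol{\varepsilon}_i^\top, \eta_i)^\top$ with block-diagonal coefficients $\mathrm{diag}(A_k, b_k)$; by (A3) and because the exponential decay $b_k \lesssim \rho^k$ is no worse than the $X$-side decay, $\tilde X_i$ still satisfies (\ref{gmc revise}) with the same $\tau$ and $\gamma$ up to absolute constants, so Theorem \ref{thmbern} applies directly to bounded Lipschitz functions of $\tilde X_i$.

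For step (i), write $\nabla L_n(\beta^*) = -n^{-1}\sum_i \phi_\nu(\xi_i w(X_i))\, w(X_i) X_i$. Each coordinate $j$ of the summand is bounded by $b_0 \nu$, since $|\phi_\nu|\leq \nu$ and $|w(X_i) X_{ij}|\leq b_0$. Using independence of $\xi_i$ and $X_i$ and $\mE\xi_i=0$, the bias of the $j$-th coordinate equals
\begin{equation*}
\mE\big[(\phi_\nu-\mathrm{id})(\xi_i w(X_i))\, w(X_i) X_{ij}\big],
\end{equation*}
which, via $|\phi_\nu(u)-u|\leq u^2/\nu$ and $\mE\xi_i^2 \lesssim \sigma_\eta^2$, is bounded by $b_0 \sigma_\eta^2/\nu \asymp b_0\sigma_\eta \sqrt{\log p/n}$ under the prescribed $\nu$. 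Applying Theorem \ref{thmbern} to the bounded Lipschitz function $\tilde X_i \mapsto \phi_\nu(\xi_i w(X_i))\, w(X_i) X_{ij}$ (whose Lipschitz constants in the coordinates of $\tilde X_i$ are controlled by $b_0$ in the $\xi$ direction and by $b_0\|B\|$-type quantities in the $X$ directions) and union bounding over $j=1,\dots,p$ delivers $|\nabla L_n(\beta^*)|_\infty \lesssim b_0\sigma_\eta \tau\sqrt{\log p/n}$ with probability at least $1-2p^{-c}$, matching $\lambda/2$ up to constants.

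For step (ii), introduce the Bregman remainder $D_n(\Delta) = L_n(\beta^*+\Delta)-L_n(\beta^*) - \langle \nabla L_n(\beta^*), \Delta\rangle$. Since $\Phi_\nu$ is $1$-strongly convex on $[-\nu,\nu]$ and $|w(X_i) X_i^\top \Delta|\leq b_0|\Delta|_2$, a localization argument valid for $|\Delta|_2$ smaller than a constant multiple of $\nu/b_0$ yields
\begin{equation*}
D_n(\Delta) \;\geq\; \frac{1}{2n}\sum_{i=1}^n w(X_i)^2 (X_i^\top \Delta)^2\, \mathbf{1}\{|\xi_i w(X_i)|\leq \nu/2\}.
\end{equation*}
By independence of $\xi_i$ and $X_i$ and Chebyshev's inequality giving $\mP(|\xi_i w(X_i)|>\nu/2) \leq 4\sigma_\eta^2/\nu^2 = o(1)$, the expectation of the right-hand side is bounded below by $\lambda_{\min}(\mE[\tfrac12 w(X_i)^2 X_i X_i^\top])\,|\Delta|_2^2$ up to a factor tending to one. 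Applying Theorem \ref{thmbern} entrywise to the bounded matrix $\hat\Gamma = n^{-1}\sum_i w(X_i)^2 X_i X_i^\top$ (each entry satisfies $|w(X_i)^2 X_{ij} X_{ik}|\leq b_0^2$) and union bounding over $p^2$ entries gives $\|\hat\Gamma - \mE\hat\Gamma\|_{\max}\lesssim b_0^2 \tau \sqrt{\log p/n}$; similarly handling the indicator-truncated version (e.g., through a smoothing approximation so that Theorem \ref{thmbern} remains applicable) and restricting to the cone, where $|\Delta|_1 \leq 4\sqrt s\, |\Delta|_2$, gives RSC with curvature $\lambda_{\min}(\mE[\tfrac{w^2}{2} X X^\top])|\Delta|_2^2$ under the scaling (\ref{scaling condition}).

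Step (iii) is then routine: optimality of $\hat\beta$ combined with the gradient bound from step (i) places $\Delta:=\hat\beta-\beta^*$ in the cone and yields $D_n(\Delta) \leq \tfrac{3\lambda}{2}\sqrt s\,|\Delta|_2$, so together with the RSC bound we conclude $|\Delta|_2 \leq 3\lambda\sqrt s / (2\lambda_{\min}(\mE[\tfrac{w^2}{2} X X^\top]))$, which is exactly (\ref{eq: rate}) for $\lambda\asymp b_0\sigma_\eta\sqrt{\log p/n}$. I expect the principal obstacle to lie in step (ii): establishing the RSC bound uniformly in $\Delta$ over the cone while simultaneously tracking the Huber indicator, the weight $w$, the serial dependence through $\tau$, and the random design. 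In particular, carefully bookkeeping which pieces of the empirical deviation scale with $|\Delta|_1$ (hence with $\sqrt s$ via the cone) versus $|\Delta|_2$, and applying Theorem \ref{thmbern} to suitable bounded Lipschitz functionals of the joint process $\tilde X_i$ at each stage, is what produces the precise scaling (\ref{scaling condition}) with $\sqrt s$ and $(\log p)^{3/2}$ rather than a weaker one.
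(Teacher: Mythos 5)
Your high-level skeleton (gradient deviation bound $+$ restricted strong convexity $+$ the basic inequality, in the style of Loh) is the same as the paper's, which proves the theorem via Lemma S3.1 and Lemma S3.2. However, both of your key technical steps deviate in ways that create genuine gaps. In step (i) you apply Theorem \ref{thmbern} to the joint $(p+1)$-dimensional process $(X_i^\top,\xi_i)^\top$ with Lipschitz dependence on the $X$-coordinates. First, the map $x\mapsto \phi_\nu(\xi w(x))\,w(x)x_j$ is not Lipschitz in $x$ uniformly in $\xi$: the term $|\phi_\nu(\xi w(x))-\phi_\nu(\xi w(y))|$ is only bounded by $|\xi|\,|w(x)-w(y)|$, so the claimed ``$b_0\|B\|$-type'' constants are not available. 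Second, even granting some joint Lipschitz control, the variance proxy in \eqref{thmb} for the joint process would involve $\gamma\sigma_\varepsilon\tau$ (and, if you must pass to the Euclidean-Lipschitz version, extra $\log p$ factors), so you could only conclude $|\nabla L_n(\beta^*)|_\infty\lesssim b_0(\sigma_\eta+\gamma\sigma_\varepsilon)\tau\sqrt{\log p/n}$, which does not justify the prescribed $\lambda\asymp b_0\sigma_\eta\sqrt{\log p/n}$ (your own stated bound even carries a spurious $\tau$ that $\lambda$ does not have). The paper avoids this entirely by exploiting exogeneity (A3): it conditions on $(X_i)_i$ and applies Theorem \ref{thmbern} to the scalar error process $\xi_i$ alone, for which $\gamma,\tau$ are constant-order by (A2) and $X_{ij}w(X_i)$ enters only as a coefficient bounded by $b_0$; this is what makes $\lambda$ free of $\gamma,\sigma_\varepsilon,\tau$.

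In step (ii), your plan to control $\|\hat\Gamma-\mE\hat\Gamma\|_{\max}$ entrywise and then use $|\Delta|_1\leq 4\sqrt{s}\,|\Delta|_2$ on the cone cannot deliver RSC under the stated scaling \eqref{scaling condition}. Entrywise, the function $x\mapsto w(x)^2x_jx_k$ is Lipschitz only in the Euclidean sense, so you would need Corollary B.2 (the $|u-v|_2$ version), whose bound carries the extra $(\log p)^2$ and $(M^2+\gamma^2\sigma^2)$ factors; the resulting cone bound scales like $s\,b_0(b_0+\kappa(B)\gamma\sigma_\varepsilon)\tau\,(\log p)^{3/2}/\sqrt{n}\,|\Delta|_2^2$, i.e.\ with $s$ rather than $\sqrt{s}$, and this is not controlled by \eqref{scaling condition} once $s\gg(\log p)^2$. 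The paper's Lemma S3.2 obtains the $\sqrt{s}$ by a different device: conditioning on $(\xi_i)_i$ (so the indicator $\mathbf{1}\{|\xi_i|\leq\nu/2\}$ is a fixed $0/1$ coefficient, no smoothing needed), applying Corollary B.2 to the quadratic form $u^\top\Gamma u$ for fixed $2s$-sparse unit vectors at deviation level $t\asymp b_0(b_0+\kappa(B)\gamma\sigma_\varepsilon)\tau\sqrt{s}\sqrt{(\log p)^3/n}$ with failure probability $e^{-cs\log p}$, and then using the $\varepsilon$-net and extension lemmas of Loh--Wainwright to pass to all of $\mb{R}^p$ and the cone. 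That discretization over sparse vectors is exactly what produces the $\sqrt{s}(\log p)^{3/2}$ in \eqref{scaling condition}; without it your argument proves the conclusion only under a strictly stronger scaling assumption. Step (iii) of your proposal is fine and matches the paper.
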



The scaling condition (\ref{scaling condition}) to ensure consistency indicates a subtle interplay with the dimensionality parameters ($s, p, n$),  internal parameters ($\tau, \gamma, \sigma_{\varepsilon}$), and the known values $b_0$ and $\kappa(B)$ associated with the weight function $w(x)$. 
The convergence rate (\ref{eq: rate}) scales inversely with the quantity $\lambda_{\min}(\mE[\frac{w^2(X_i)}2X_iX_i^\top])$ and it suggests that we can not shrink the covariates too aggressively. If $X_i$ is well-behaved with the existence of finite exponential moment, one may eliminate the weight function and replace the factor by the larger quantity $\lambda_{\min}(\mE[X_iX_i^\top])$. 

In the extensively studied regression setting with i.i.d.~covariates, \cite{fan2017estimation} delivered an optimal convergence rate of $|\widehat{\beta}-\beta|_2$ for weakly sparse model under the fat tails (the same as the minimax rate in \cite{raskutti2011minimax}). In the special exact sparse case, their convergence rate is $\sqrt{s(\log p)/n}$ and it relies on the sub-Gaussian tail assumption for the covariates $X_i$. \cite{loh2021scale} allowed broader classes of distributions for $X_i$ by inserting a weight function to control the Euclidean norm of $X_i$, but required the errors drawn i.i.d.~from a symmetric distribution and thus selected $\nu$ at a fixed constant order (cf.~Theorem 1), while \cite{fan2017estimation} waived the symmetry requirement by allowing $\nu$ to diverge in order to reduce the bias induced by the Huber loss when the distribution of $\xi_i$ is asymmetric. We borrow the ideas from both and further account for time dependent covariates and errors. Compared to \cite{loh2021scale} with i.i.d.~covariates and i.i.d.~errors, our result requires a stronger scaling condition (\ref{scaling condition}) in terms of the dependence quantities $\gamma, \tau$ and a larger power of $\log p$, by concerning both dependent covariates and errors. 

Applying $\ell_1$ regularization in time series regression, \cite{wu2016performance} (cf.~Theorem 5) dealt with correlated covariates and errors and allowed a wider class of stationary processes in a causal form. The linear error process in our consideration falls in the weaker dependence range within their framework. If $\gamma, \tau, \sigma_{\eta} = O(1)$, $p=o(n^{q-1})$ is required for their regular estimator without accounting for robustness, where $q>2$ is the order of finite moments for $\xi_i$. \cite{chernozhukov2021lasso} considered the Lasso estimator for a system of time series regression equations with one regression equation as a special case, for which the allowed dimension is still of a polynomial rate to ensure consistency by looking into the performance bound with respect to the prediction norm (cf.~Corollary 5.4). In comparison with the above two work, we can allow a much wider range for the dimension $p$ under mild conditions.

The tuning parameter $\nu$ plays a key role by adapting to errors with fat tails. In practical applications, the optimal values of the
tuning parameters $\nu$ and $\lambda$ can be chosen by a two-dimensional grid search using cross-validation or information-based criterion such as AIC or BIC. We leave theoretical investigation on selecting the tuning parameters as important future work.

\subsection{Estimating Transition Matrix in VAR Models}
\label{sec: VAR estimation}
To study the evolution of a set of endogenous variables over time, a popular choice is vector autoregression. Interpretations of large vector autoregressive models have been developed in various applications such as policy analysis (\cite{sims1992interpreting}), financial systemic risk analysis (\cite{Gourieroux2011}), portfolio selection (\citet{ledoit2003improved}), functional genomics (\cite{shojaie2012adaptive}) and brain networks (\cite{sameshima2014methods}). 

As a general VAR model of order $d$ can be reformulated as a VAR(1) model by appropriately redefining the random vectors, much work (\cite{han2015direct}, \cite{guo2016high}) considered the model with lag 1 as given in \eqref{var1}. Among the work concerning high dimensional vector autoregressive models, most investigations require the Gaussian assumption (\cite{kock2015oracle}, \cite{basu2015regularized}, \cite{han2015direct}) or some structure assumption stronger than the minimal requirement $\lambda_{\max}(A)<1$; for example, \cite{han2015direct} imposed $\|A\|<1$, and \cite{guo2016high} considered banded $A$ with some decay condition on $\|A^k\|$ free of $p$. For many VAR designs (Example \ref{ex: VAR} is one such), it could happen that $\|A\|\geq 1$ and the dimension $p$, as the size of $A$, can play a role in measuring the temporal and cross-sectional dependence. \cite{basu2015regularized} proposed stability measures to capture temporal and cross-sectional dependence. From a different viewpoint, we try to fill in the gap between the spectral radius of a matrix and its spectral norm. Intuition can be gained from the proposition below. It provides a sufficient and necessary condition for $\lambda_{\max}(A)<1$ by relating to the spectral norm. 
\begin{prop}\label{p1}
For any matrix $A$, it holds that $\lambda_{\max}(A) < 1$ if and only if there exists some finite integer $k$ such that $\|A^k\| \leq \rho_0$ given any universal constant $0 < \rho_0 < 1$. 
\end{prop}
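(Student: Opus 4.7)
The plan is to prove both directions via Gelfand's formula for the spectral radius, namely $\lim_{k\to\infty}\|A^k\|^{1/k} = \lambda_{\max}(A)$, together with the elementary spectral mapping fact that the eigenvalues of $A^k$ are $\lambda_i^k$, hence $\lambda_{\max}(A^k) = [\lambda_{\max}(A)]^k$.

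For the forward direction ($\Rightarrow$), suppose $\lambda_{\max}(A) < 1$. Fix any $\rho_0 \in (0,1)$ and pick an intermediate $r$ with $\lambda_{\max}(A) < r < 1$. By Gelfand's formula, there exists $k_0$ such that $\|A^k\|^{1/k} \leq r$ for every $k \geq k_0$, and therefore $\|A^k\| \leq r^k$. Since $r<1$, we can further enlarge $k$ so that $r^k \leq \rho_0$, yielding $\|A^k\| \leq \rho_0$ for all sufficiently large $k$.

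For the converse ($\Leftarrow$), suppose there exists a finite $k$ with $\|A^k\| \leq \rho_0 < 1$. Using $\lambda_{\max}(M) \leq \|M\|$ for any square matrix $M$ together with the spectral mapping identity $\lambda_{\max}(A^k) = [\lambda_{\max}(A)]^k$, we obtain
\begin{equation*}
[\lambda_{\max}(A)]^k = \lambda_{\max}(A^k) \leq \|A^k\| \leq \rho_0 < 1,
\end{equation*}
so $\lambda_{\max}(A) \leq \rho_0^{1/k} < 1$, as required.

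Neither direction presents a genuine obstacle; the result is essentially a repackaging of Gelfand's formula. The only thing to be careful about is the quantifier in the statement: the claim is phrased as ``given any universal constant $0<\rho_0<1$, there exists some finite $k$''. The forward direction must hold for every such $\rho_0$, which is precisely what Gelfand's formula delivers (one can always make $r^k$ eventually smaller than any prescribed $\rho_0$). The backward direction only needs existence of a single $k$ and a single $\rho_0<1$, which is the weaker logical content and follows immediately from the spectral mapping bound.
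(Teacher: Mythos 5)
Your proof is correct and follows essentially the same route as the paper: the backward direction is the identical inequality $[\lambda_{\max}(A)]^k=\lambda_{\max}(A^k)\leq\|A^k\|$, and your use of Gelfand's formula for the forward direction is just a repackaging of the paper's argument, which obtains the same eventual bound $\|A^k\|\leq[\lambda_{\max}(A)+\epsilon]^k$ by rescaling $A$ and invoking the convergent-matrix theorem.
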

Letting $\tau =\min\{k\in\mb{Z}^+: \|A^k\| \leq \rho_0\}$ and $\gamma = \rho_0^{-1} \max_{0\leq k \leq \tau-1}\|A^k\|$, condition (\ref{gmc revise}) holds for the model (\ref{var1}) without extra requirement.
We now introduce the notation. Let ${\boldsymbol a}_{j\cdot}^\top$  be the $j$-th row of $A$ and
$s_j$ be the cardinality of the support set of ${\boldsymbol a}_{j\cdot}$, i.e., $s_j=|\text{supp}({\boldsymbol a}_{j\cdot})|=|\{i: a_{ij}\neq 0\}|$. Denote $s = \max_{1\leq j \leq p}s_j$ and $\mathcal{S}=\sum_{i=j}^ps_j$. 
For robustness, we first truncate the data by obtaining $\tilde{X_i} = \phi_{\nu}(X_i)$, where $\nu$ is the truncation parameter and is to be determined later. 
For notational convenience, we assume $X_0$ is also observed. Based on the truncated sample $\widetilde X_i$ and tuning parameter $\lambda>0$, we propose to estimate $A$ by solving the following Lasso problem:
\begin{equation}
\label{eq: lasso}
	\widehat{A}=\text{arg min}_{B\in\mb{R}^{p\times p}} \frac1n\sum_{i=1}^n |\widetilde{X}_{i} - B\widetilde{X}_{i-1}|_2^2+\lambda|B|_1,
\end{equation}
which is equivalent to solving the $p$ sub-problems:
\begin{equation}
\label{subproblem for Lasso}
	\widehat{\boldsymbol a}_{j\cdot}=\text{arg min}_{{\boldsymbol b}\in\mb{R}^{p}}\frac1n\sum_{i=1}^n(\widetilde{X}_{ij}-{\boldsymbol b}^\top\widetilde{X}_{i-1})^2+\lambda|\boldsymbol{b}|_1.
\end{equation}
Before proceeding, we state the key assumptions on the process (\ref{var1}) and some scaling conditions to guarantee consistency of the robust estimator $\hat{A}$. 

\textbf{Assumptions.}\\
\noindent (B1) $\mE \varepsilon_{ij} = 0$; $\mE\varepsilon_{ij}^2 = 1$; $\max_{1\leq j \leq p} \|X_{ij}\|_q = \mu_q<\infty$ for some $q>2$.\\
\noindent (B2) $\mu_q \gamma\tau s^2[(\log p)/{n}]^{(q-2)/(2q-2)}\to0.$  \\
\noindent (B2') $\mu_q \gamma\tau {\mathcal{S}}^2[(\log p)/{n}]^{(q-2)/(2q-2)}\to0.$

Assumption (B1) imposes polynomial moment conditions for the underlying VAR process. 
Assumption (B2) (or (B2')) assumes a vanishing scaling property. If $\mu_q$, $\tau$ and $\gamma$ are of a constant order, (B2) is reduced to the scaling condition that involves $s$ (or $\mathcal{S}$), $n$ and $p$ only. 
\begin{thm}\label{thmestimation} Let Assumptions (B1) and (B2) be satisfied. Choose the truncation parameter $\nu\asymp \mu_q(n/\log p)^{1/(2q-2)}$. Let $\widehat{A}$ be the solution of (\ref{eq: lasso}) with $\lambda\asymp \mu_q\gamma\tau(\|A\|_{\infty}+1)[(\log p)/n]^{(q-2)/(2q-2)}$.
	It holds that
	\begin{eqnarray}\label{eq: thmestimation}
	\|\widehat{A}-A\|_{\infty} \leq C \mu_q\gamma\tau (\|A\|_{\infty}+1)s\bigg(\frac{\log p}{n}\bigg)^{\frac{1}{2}-\frac{1}{2q-2}}
   \end{eqnarray}
	with probability at least $1-8p^{-c}$ for some constant $c>0$. If Assumption (B2') is further satisfied, it also holds that	\begin{equation}\label{Frobenous norm}
	 \|\widehat{A}-A\|_{F}  \leq C'  \mu_q\gamma\tau(\|A\|_{\infty}+1)\sqrt{\mathcal{S}}\left(\frac{\log p}n\right)^{\frac{1}{2}-\frac{1}{2q-2}}
	\end{equation}
	with probability at least $1-8p^{-c}$ for some constant $c>0$.
\end{thm}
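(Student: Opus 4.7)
The plan is to analyze the $p$ row-wise sub-problems in \eqref{subproblem for Lasso} separately and then assemble the row bounds into $\|\cdot\|_\infty$ and $\|\cdot\|_F$ estimates. Fix a row index $j$. The standard Lasso oracle analysis requires (i) a deviation bound showing that the gradient of the empirical quadratic loss at the truth $\boldsymbol a_{j\cdot}$ is dominated by $\lambda/2$ in $\ell_\infty$, and (ii) a restricted eigenvalue (RE) condition on $\widehat{\Sigma} := n^{-1}\sum_{i=1}^n \widetilde{X}_{i-1}\widetilde{X}_{i-1}^\top$ over the $s$-sparse cone. Because truncation destroys the orthogonality $\mathbb{E}[X_{i-1}\varepsilon_{ij}] = 0$, the gradient at $\boldsymbol a_{j\cdot}$ splits into a centered empirical process plus a deterministic truncation bias that must be balanced against the stochastic fluctuation via the choice of $\nu$.

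For the stochastic piece, I would decompose
\begin{equation*}
\frac{1}{n}\sum_{i=1}^n \widetilde X_{i-1,k}\bigl(\widetilde X_{ij} - \boldsymbol a_{j\cdot}^\top \widetilde X_{i-1}\bigr) \;=\; \Delta^{(1)}_{jk} \;-\; \boldsymbol a_{j\cdot}^\top \Delta^{(2)}_{k\cdot} \;+\; B_{jk},
\end{equation*}
where $\Delta^{(1)}$ and $\Delta^{(2)}$ are centered sample averages of the truncated cross-lag and same-lag products and $B_{jk}$ collects the expectations. Treating $Y_i=(X_i^\top,X_{i-1}^\top)^\top$ as a linear process that inherits the dependence parameters $(\gamma,\tau)$ up to absolute constants, I apply Theorem \ref{thmbern} to $G(Y_i) = \phi_\nu(Y_{i,k+p})\phi_\nu(Y_{i,j})/(2\nu)$, which is bounded by $\nu/2$ and satisfies \eqref{lips} with weights $(1/2,1/2)$. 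A union bound over $O(p^2)$ coordinate pairs then yields a uniform sub-Gaussian deviation of order $\nu\gamma\tau\sqrt{\log p/n}$ for all entries of $\Delta^{(1)},\Delta^{(2)}$ with probability at least $1-O(p^{-c})$.

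The deterministic term $B_{jk}$ equals $-\mathbb E[\widetilde X_{i-1,k}T_{ij}] + \mathbb E[\widetilde X_{i-1,k}\,\boldsymbol a_{j\cdot}^\top T_{i-1}]$ with $T_{i\ell}= X_{i\ell} - \widetilde X_{i\ell}$; the $\varepsilon_{ij}$ contribution vanishes because $\widetilde X_{i-1,k}$ is past-measurable. H\"older's inequality combined with $\mathbb E|X_{i\ell}|^q \leq \mu_q^q$ yields $|\mathbb E[\widetilde X_{i-1,k}T_{i\ell}]| \lesssim \mu_q^q/\nu^{q-2}$, so $|B_{jk}| \lesssim (1+\|A\|_\infty)\mu_q^q/\nu^{q-2}$. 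Plugging in $\nu \asymp \mu_q(n/\log p)^{1/(2q-2)}$ matches both the stochastic and bias contributions to $\lambda/2 \asymp \mu_q\gamma\tau(\|A\|_\infty+1)(\log p/n)^{(q-2)/(2q-2)}$. This bias-variance balancing is the main obstacle: the factor $\|A\|_\infty+1$ in $\lambda$ is forced by the truncation bias transmitted through $\boldsymbol a_{j\cdot}^\top \Delta^{(2)}$, and the sharp sub-Gaussian tail in Theorem \ref{thmbern} is essential to avoid extra logarithmic factors in the rate.

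Finally, an entrywise application of Theorem \ref{thmbern} to $\widehat\Sigma - \mathbb E\widehat\Sigma$, combined with a truncation-bias bound relating $\mathbb E\widehat\Sigma$ to the true covariance $\mathbb E[X_{i-1}X_{i-1}^\top]$ (positive definite under stationarity and unit innovation variance), transfers the RE condition onto $\widehat\Sigma$ over $s$-sparse vectors under the scaling condition (B2). Standard Lasso arguments (cone condition and basic inequality) then deliver $|\widehat{\boldsymbol a}_{j\cdot} - \boldsymbol a_{j\cdot}|_1 \lesssim s\lambda$ and $|\widehat{\boldsymbol a}_{j\cdot} - \boldsymbol a_{j\cdot}|_2 \lesssim \sqrt{s_j}\lambda$ with high probability; a union bound over $j$ yields \eqref{eq: thmestimation}, while summing the squared row bounds and invoking the stronger scaling (B2') for uniform RE control over all $s_j$-sparse directions produces $\|\widehat A - A\|_F^2 \lesssim \mathcal S\lambda^2$, which is \eqref{Frobenous norm}.
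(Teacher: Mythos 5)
Your proposal is correct and follows the paper's overall skeleton (row-wise analysis of \eqref{subproblem for Lasso}, a gradient deviation bound plus an RE condition both obtained from the new Bernstein inequality, truncation bias controlled by H\"older's inequality with the $q$-th moment, then the standard basic-inequality/cone argument), but several of your technical steps take a genuinely different route. For the deviation bound, the paper applies Corollary \ref{corbern} once per coordinate to the full function $G(X_i,X_{i-1})=2(\widetilde X_{ij}-\widetilde X_{i-1}^\top\boldsymbol a_{j\cdot})\widetilde X_{(i-1)k}$, absorbing $\|A\|_\infty+1$ into the bound and Lipschitz constant, whereas you split the gradient entrywise into truncated bilinear products $\phi_\nu(\cdot)\phi_\nu(\cdot)/(2\nu)$ and contract with $\boldsymbol a_{j\cdot}$ afterwards; both yield the same $\lambda$ and the same bias term $\lesssim(\|A\|_\infty+1)\mu_q^q\nu^{2-q}$ (your observation that the $\varepsilon_{ij}$ contribution vanishes by past-measurability matches the paper's $\mE[\nabla L_{j1}(\boldsymbol a_{j\cdot})]=0$). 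For the RE condition, the paper applies Theorem \ref{thmbern} to the quadratic form $(u^\top\widetilde X_i)^2$ over sparse $u$ and then extends via the $\varepsilon$-net and Lemma 12/15 of Loh and Wainwright, which is what forces the $s^2$ in (B2); your entrywise max-norm bound on $\widehat\Sigma-\mE\widehat\Sigma$ combined with $|\Delta|_1\leq4\sqrt s|\Delta|_2$ on the cone is simpler, avoids the net argument, and in fact only consumes a factor $s$ rather than $s^2$, so it is comfortably covered by (B2) (be careful to state the RE over the cone, not merely over exactly sparse vectors, which the max-norm route does give directly; also note, as in the paper, that $\Sigma_0\succeq I_p$ from the unit innovation variance is what anchors the population curvature at $1$). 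For the Frobenius bound, the paper re-verifies RE for the vectorized problem with $I_p\otimes(\widetilde X^\top\widetilde X/n)$ under (B2'), while you simply sum the per-row bounds $|\widehat\Delta_j|_2^2\lesssim s_j\lambda^2$ to get $\mathcal S\lambda^2$ — a more direct assembly that, if carried out carefully, does not even need the stronger scaling (B2'). The only caveats are shared with the paper itself: the balancing of the bias $\mu_q^2(\log p/n)^{(q-2)/(2q-2)}$ against $\lambda\asymp\mu_q\gamma\tau(\cdot)$ implicitly treats $\mu_q\lesssim\gamma\tau$, and the sub-exponential part of the tail must be checked to be dominated under (B2), which it is for your choice of $\nu$.
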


The obtained rates of convergence are governed by two sets of parameters: (i) dimensionality parameters: the dimension $p$, 
sparseness parameter $s$ (or $\mathcal{S}$), and the sample size $n$; (ii) internal parameters: the moment $\mu_q$, dependence quantities $\tau$, $\gamma$, and the maximum absolute row sum $\|A\|_\infty$.
If the internal parameters are assumed to be of a constant order, we can get
\begin{equation*}
\|\widehat{A}-A\|_F = O_{\mathbb{P}}\Big(\sqrt{\mathcal{S}}\big(\frac{\log p}{n}\big)^{\frac{1}{2}-\frac{1}{2q-2}}\Big).
\end{equation*}
To ensure consistency, the dimension $p$ can be allowed to increase exponentially with $n$ in view of the mild scaling condition. Compared to \cite{guo2016high} with the same constant order of internal parameters, they can only allow the narrower range $p=o(n^c)$ for some $0<c<(q-4)/8$ (cf.~Condition 4(i)). For Gaussian autoregressive models,  proposition 4.1 of \cite{basu2015regularized} suggests the order in terms of dimension parameters as
\begin{equation*}
\|\widehat{A}-A\|_F = O_{\mathbb{P}}\Big( \sqrt{\mathcal{S}}\sqrt{\frac{\log p}{n}}\Big).
\end{equation*}
In the presence of fat tails with the existence of finite $q$-th moment, our result yields a slightly slower convergence rate characterized by the moment order $q$ and it is closer to their bound when $q$ gets larger.

As an alternative method, the idea of Dantzig-type estimation (\cite{candes2007dantzig}, \cite{cai2011constrained}, \cite{han2015direct}) can be modified in the robust way. 
Let $\Sigma_k$ denote the autocovariance matrix of the process $(X_i)$ at lag $k$. The celebrated Yule-Walker equation $A=\Sigma_0^{-1}\Sigma_1$ suggests that a good estimate $\widehat A$ should have a small error in terms of $\|\Sigma_0\widehat A-\Sigma_1\|_{\max}$. Without direct access to the autocovariance matrices $\Sigma_0$ and $\Sigma_1$, a natural approach is to find nice estimators for them. \cite{han2015direct} used sample autocovariance matrices and enjoyed a nice performance bound under Gaussianity. For fat-tailed cases, we 
consider the robust estimators of the autocovariance matrices based on the truncated sample:
$$\widehat\Sigma_k=\frac{1}{n}\sum_{i=1}^{n}\widetilde X_{i-k}\widetilde X_{i}^\top, \text{ for  }k=0,1.$$
The Dantzig- type estimator is then modified to solving the following convex programming:
\begin{align}\label{Dantzig}\widehat A=\text{arg min}_{B\in\mb{R}^{p\times p}} |B|_1 \ \ \text{s.t. } \ \ \|\widehat\Sigma_0 B-\widehat\Sigma_1\|_{\max}\leq\lambda,
\end{align}
where $\lambda>0$ is a tuning parameter. 
Observe that problem \eqref{Dantzig} can be solved in parallel, namely, \eqref{Dantzig} is equivalent to $p$ subproblems:
\begin{align}\label{thetahat}\widehat {\boldsymbol a}_{\cdot j}=\text{arg min}_{{\boldsymbol b}\in\mb{R}^{p}} |{\boldsymbol b}|_1 \ \ \text{s.t. } \ \ |\widehat\Sigma_0 {\boldsymbol b}-\widehat{\Sigma}_1 u_j|_{\infty}\leq\lambda,\quad j=1,\dots, p
\end{align}
for any unit vector $u_j$. Let ${\boldsymbol a}_{\cdot1},{\boldsymbol a}_{\cdot2},\dots, {\boldsymbol a}_{\cdot p}$ be columns of $A$ and denote $s^* = \max_{1\leq j \leq p}|\text{supp}({\boldsymbol a}_{\cdot j})|$. 
We can obtain $\widehat A$ by simply concatenating all the columns $\widehat{\boldsymbol a}_{\cdot j}$, i.e. $\widehat A=(\widehat{\boldsymbol a}_{\cdot 1}, \widehat{\boldsymbol a}_{\cdot 2}, \dots, \widehat{\boldsymbol a}_{\cdot p})$. The next theorem delivers an upper bound on the statistical accuracy.

\begin{thm}\label{thmdan}
Let Assumption (B1) be satisfied. Let $\widehat{A}$ be the solution of (\ref{Dantzig}) with $\nu\asymp \mu_q(n/\log p)^{1/(2q-2)}$ and $\lambda \asymp \mu_q \gamma\tau(\|A\|_1+1)[(\log p)/n]^{(q-2)/(2q-2)}$. With probability at least $1-8p^{-c'}$ for some constant $c'>0$, it holds that
\begin{eqnarray}\label{eq: max norm}
&&	\|\widehat A-A\|_{\max}\leq C \mu_q\gamma\tau\|\Sigma_0^{-1}\|_1(\|A\|_1+1)\bigg(\frac{\log p}n\bigg)^{\frac{1}{2}-\frac{1}{2q-2}}, \\\label{thmeqdan}
 &&\|\widehat A-A\|_1\leq C' \mu_q\gamma\tau\|\Sigma_0^{-1}\|_1(\|A\|_1+1)s^*\bigg(\frac{\log p}n\bigg)^{\frac{1}{2}-\frac{1}{2q-2}}.
\end{eqnarray}
\end{thm}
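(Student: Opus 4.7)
The argument rests on the Yule--Walker identity $\Sigma_0 A=\Sigma_1$, so the whole task reduces to controlling the entry-wise max-norm discrepancies $\|\widehat\Sigma_k-\Sigma_k\|_{\max}$ for $k\in\{0,1\}$. Once I establish, on a good event of probability at least $1-8p^{-c'}$,
\[
\Delta_n:=\|\widehat\Sigma_0-\Sigma_0\|_{\max}\vee\|\widehat\Sigma_1-\Sigma_1\|_{\max}\lesssim \mu_q\gamma\tau(\log p/n)^{(q-2)/(2q-2)},
\]
the remainder of the proof is a short Dantzig manipulation together with a column-wise cone argument.

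The main and most delicate step is the bound on $\Delta_n$, which I obtain entry by entry and then union-bound over the $O(p^2)$ index pairs. For a fixed pair $(j_1,j_2)$, I decompose the deviation into the stochastic part $(\widehat\Sigma_k-\mE\widehat\Sigma_k)_{j_2,j_1}$ and the truncation bias $(\mE\widehat\Sigma_k-\Sigma_k)_{j_2,j_1}$. The bias is $|\mE[X_{ij_1}X_{i-k,j_2}]-\mE[\phi_\nu(X_{ij_1})\phi_\nu(X_{i-k,j_2})]|$, which by splitting on $\{|X_{ij_1}|>\nu\}\cup\{|X_{i-k,j_2}|>\nu\}$ and applying Markov plus H\"older under Assumption (B1) is of order $\mu_q^q/\nu^{q-2}$. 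For the stochastic part I view $Z_i=(X_i^\top,X_{i-k}^\top)^\top$ as a $2p$-dimensional stationary linear process (with the innovation vector padded to dimension $2p$ if needed); its block coefficient matrices satisfy $\|\tilde A_j\|\le\|A_j\|+\|A_{j-k}\|$, so condition (\ref{gmc revise}) is inherited with $(\gamma,\tau)$ of the same order. I then apply Theorem \ref{thmbern} to $G(z)=\phi_\nu(z_{j_1})\phi_\nu(z_{p+j_2})/(2\nu)$, which is bounded by $\nu/2$ and satisfies (\ref{lips}) with weights $\tfrac12,\tfrac12$ on the two active coordinates. The sub-Gaussian branch of \eqref{thmb} dominates at the prescribed scale and, after undoing the $1/(2\nu)$ rescaling and averaging over $n$, gives an entry-wise stochastic error of order $\mu_q\gamma\tau(\log p/n)^{(q-2)/(2q-2)}$. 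The choice $\nu\asymp\mu_q(n/\log p)^{1/(2q-2)}$ is exactly the value that equates the bias $\mu_q^q/\nu^{q-2}$ with this stochastic rate, so $\Delta_n$ achieves the target order. The main obstacle in this step is verifying that the stacked process $(X_i,X_{i-1})$ still satisfies (\ref{gmc revise}) with $(\gamma,\tau)$ of the correct order, and that the sub-exponential branch $C_2\tau M x$ of \eqref{thmb} stays negligible after the $2\nu$ rescaling (so that the rate is not spoiled by a $\nu^2\log p/n$ remainder).

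The remaining Dantzig manipulation is brief. Using $\|MN\|_{\max}\le\|M\|_{\max}\|N\|_1$,
\[
\|\widehat\Sigma_0 A-\widehat\Sigma_1\|_{\max}\le \|\widehat\Sigma_0-\Sigma_0\|_{\max}\|A\|_1+\|\widehat\Sigma_1-\Sigma_1\|_{\max}\lesssim (\|A\|_1+1)\Delta_n\le\lambda,
\]
so the true $A$ is feasible for \eqref{Dantzig}. By the column-wise optimality in \eqref{thetahat}, $|\widehat{\boldsymbol a}_{\cdot j}|_1\le|\boldsymbol a_{\cdot j}|_1$ and hence $\|\widehat A\|_1\le\|A\|_1$. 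Therefore
\[
\|\Sigma_0\widehat A-\Sigma_1\|_{\max}\le \|\Sigma_0-\widehat\Sigma_0\|_{\max}\|\widehat A\|_1+\|\widehat\Sigma_0\widehat A-\widehat\Sigma_1\|_{\max}+\|\widehat\Sigma_1-\Sigma_1\|_{\max}\lesssim (\|A\|_1+1)\Delta_n.
\]
Since $\widehat A-A=\Sigma_0^{-1}(\Sigma_0\widehat A-\Sigma_1)$, applying $\|MN\|_{\max}\le\|M\|_\infty\|N\|_{\max}$ together with $\|\Sigma_0^{-1}\|_\infty=\|\Sigma_0^{-1}\|_1$ (symmetry of $\Sigma_0$) produces \eqref{eq: max norm}. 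For \eqref{thmeqdan} I run a column-wise cone argument: setting $\delta_j=\widehat{\boldsymbol a}_{\cdot j}-\boldsymbol a_{\cdot j}$ and $S_j=\text{supp}(\boldsymbol a_{\cdot j})$, the inequality $|\boldsymbol a_{\cdot j}+\delta_j|_1\le|\boldsymbol a_{\cdot j}|_1$ splits on $S_j$ versus $S_j^c$ to give $|\delta_{j,S_j^c}|_1\le|\delta_{j,S_j}|_1$, hence $|\delta_j|_1\le 2|\delta_{j,S_j}|_1\le 2s^*|\delta_j|_\infty\le 2s^*\|\widehat A-A\|_{\max}$; maximizing over $j$ delivers \eqref{thmeqdan}.
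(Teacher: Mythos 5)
Your proposal is correct and follows essentially the same route as the paper: a max-norm deviation bound for the truncated autocovariance estimators $\widehat\Sigma_0,\widehat\Sigma_1$ (the paper's Lemma S3.3, obtained from Theorem 2.1 and its two-argument corollary applied to the rescaled product $\phi_\nu(\cdot)\phi_\nu(\cdot)$ plus the H\"older-type bias bound of order $\mu_q^q\nu^{2-q}$), then feasibility of $A$, the inequality $\|\widehat A\|_1\le\|A\|_1$, the $\Sigma_0^{-1}$ decomposition for $\|\widehat A-A\|_{\max}$, and the column-wise support/cone argument giving the extra factor $s^*$ for $\|\widehat A-A\|_1$. The only cosmetic difference is that you verify the stacked process $(X_i^\top,X_{i-1}^\top)^\top$ inherits condition (2.3) directly, which is exactly the content of the paper's Corollary S2.1.
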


It is interesting to see that the convergence rate of the modified Dantzig-type estimator has a similar form to that of the robust Lasso estimator developed in Theorem \ref{thmestimation}, if the included internal parameters for the process are of a constant order. Both methods involve $p$ parallel programming problems with the lasso-based one concerning row-by-row estimation while the Dantzig method concerning column-by-column instead. The situation $\|A\|<1$ studied by \cite{han2015direct} is the special case where $\gamma=1$ and $\tau =1$ in our framework. In their paper, a more flexible sparse condition was imposed: the transition matrix $A$ belongs to a class of weakly sparse matrices defined in terms of strong $\ell^r$-ball ($0\leq r <1$), which was also considered by \cite{bickel2008covariance}, \cite{rothman2009generalized},
\cite{cai2011constrained}, \cite{cai2012optimal} in estimating covariance and precision matrices.  For $r=0$, it is the exact sparse case and Theorem 1 in \cite{han2015direct} implies the dimension parameter order
\begin{equation*}
\|\widehat{A}-A\|_1 = O_{\mathbb{P}}\Big(s^*\sqrt{\frac{\log p}{n}}\Big),
\end{equation*}
a bit sharper than our result (\ref{thmeqdan}). There is additional cost for fat-tailed processes with robustness absorbed. We shall remark that we can also derive the bound of $\|\widehat{A}-A\|_1$ accordingly for weakly sparse $A$ based on the result (\ref{eq: max norm}) without any technical difficulty.

\section{Simulation Study}
In this section, we evaluate the finite sample performance of both robust Lasso and Dantzig estimators that are proposed in Section \ref{sec: VAR estimation} and compare with the traditional Lasso and Dantzig methods. Simulation on time series linear regression is presented in the supplementary material. We consider the model \eqref{var1}, where $\varepsilon_{ij}$'s are i.i.d.~standardized Student's $t$-distributions with $\text{df}=5$. We take the numerical setup of $n=50,100$ and $p=50, 100, 500$ and set $s =\lfloor \log p \rfloor $. For the true transition matrix $A=(a_{ij})$, we consider the following designs.
\begin{itemize}
	\item [(1)] Banded: $A = (\lambda^{|i-j|}{\bf 1}\{|i-j|\leq s\})$ and $\lambda = 0.5$. 
	\item [(2)] Block diagonal: $A = \text{diag}\{A_i\}$, where each $A_i \in \mathbb{R}^{s\times s}$ follows the structure in Example \ref{ex: VAR} with $B=2$ and $\lambda_i \sim Unif(-0.8, 0.8).$
	\item[(3)] Toeplitz: $A = (\lambda^{|i-j|})$ and $\lambda = 0.5$.
	\item[(4)] Random Sparse: $a_{ii} \sim Unif(-0.8,0.8)$ and $a_{ij}\sim N(0,1)$ for $(i,j)\in C \subset \{(i,j): i\neq j\}$ where $C$ is randomly selected and $|C| = s^2$.
\end{itemize}
To ensure stationarity of the VAR model, the designs in (1), (3), and (4) were further scaled by a factor of $2\lambda_{\max}(A)$ to ensure that the spectral radius of the transition matrix is less than 1. Figure \ref{f2} shows the plot of $\|A^k\|$ under the four designs with $p=100, 500$. These patterns of matrix $A$ were previously studied in \cite{han2015direct}, where the assumption $\|A\|<1$ was necessary. In this study, we keep the designs of symmetric sparse and weakly sparse matrices, which are presented in cases (1) and (3), respectively. For these two cases, it holds that $\|A^k\| = (\lambda_{\max}(A))^k = (0.5)^k$, and condition (\ref{gmc revise}) is satisfied with $\tau=1$, $\gamma = 1$ and $\rho_0=0.5$. However, for the designs using asymmetric coefficient matrices (cases (2) and (4)), we allow $\|A\|>1$, and $\tau$ and $\gamma$ in condition (\ref{gmc revise}) may depend on the value of $p$.

\begin{figure}
    \centering
	\includegraphics[width=0.9\textwidth]{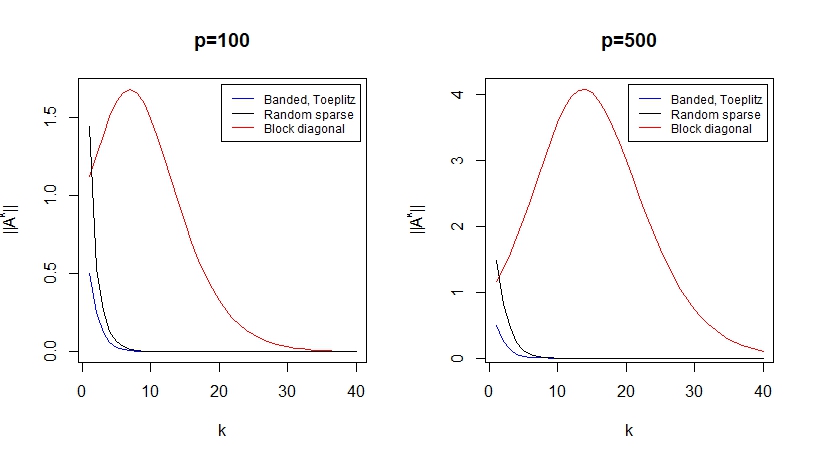} \\ \vspace{-0.7 cm}
	\caption{The graph of $\|A^k\|$ for the four designs of $A$ with $p=100, 500$}
	\label{f2}
\end{figure}


In each repetition, we generate a process of length $2n$. We run the estimation procedure in (\ref{eq: lasso}) or (\ref{Dantzig}) based on $\{X_{1},\ldots, X_n\}$ by a two-dimensional grid search for the tuning parameters $\nu$ and $\lambda$. For each $(\nu, \lambda)$ in the grid, denote the estimator by $\hat{A}(\nu, \lambda)$. Then $(\nu, \lambda)$ is chosen such that $n^{-1}\sum_{t=n+1}^{2n}|X_t - \hat{A}(\nu, \lambda)X_{t-1}|_2^2$, the average prediction error on $\{X_{n+1}, \ldots, X_{2n}\}$, is minimized.
The following tables report the average (as
an entry) and standard deviation (in parentheses) of estimation error based on 1000 repetitions in different matrix norms consistent with Theorem \ref{thmestimation} and Theorem \ref{thmdan}. As comparisons, we obtain the results for robust methods and the traditional versions (Lasso estimator in \cite{tibshirani1996regression} and Dantzig-based estimator in \cite{han2015direct}) in different designs.

\begin{table}
\begin{center}
\small
\begin{tabular}{|c|c|c|c|c|}
 \hline
$p=50, n=100$ &  Banded & Block & Toeplitz & Random \\ 
\hline 
Lasso $L_\infty$  & 1.49 (0.060) & 0.96 (0.072) & 1.46 (0.143) & 1.28 (0.124)\\ 
 
Lasso  $L_F$ & 1.56 (0.112) & 1.22 (0.112) & 1.55 (0.121) & 1.30  (0.084)\\ 

Robust-Lasso  $L_\infty$ & 1.35 (0.049) & 0.80 (0.078)& 1.30 (0.072) & 1.17 (0.065)\\ 

Robust-Lasso $L_F$  & 1.37 (0.076) & 1.05 (0.041)& 1.36 (0.090)& 1.23 (0.038)\\ 
 \hline
 \hline
Dantzig $L_1$  & 2.01 (0.121) & 1.91 (0.087) & 2.02 (0.140) & 2.40 (0.159)\\ 
 
Dantzig  $L_F$ & 2.10 (0.095) & 1.92 (0.074) & 2.04 (0.125) & 2.69  (0.078)\\ 

Robust-Dantzig  $L_1$ & 1.86 (0.050) & 1.08 (0.058)& 1.86 (0.043) & 1.47 (0.077)\\ 

Robust-Dantzig $L_F$  & 1.90 (0.049) & 1.41 (0.044)& 1.89 (0.033)& 2.02 (0.073)\\ 
 \hline
\end{tabular}
\end{center}
\end{table}

\begin{table}
\begin{center}
\small
\begin{tabular}{|c|c|c|c|c|}
 \hline
$p=100, n=50$ &  Banded & Block & Toeplitz & Random \\ 
\hline 
Lasso $L_\infty$  & 2.64 (0.205) & 2.31 (0.093) & 2.49 (0.308) & 2.40 (0.114)\\  
 
Lasso  $L_F$ & 2.73 (0.168) & 2.44 (0.141) & 2.74 (0.125) & 2.48 (0.119)\\  

Robust-Lasso  $L_\infty$ & 2.65 (0.073) & 2.26 (0.101) &  2.67 (0.039) & 2.18 (0.084)\\  

Robust-Lasso $L_F$  & 2.67 (0.080) & 2.38 (0.139) & 2.69 (0.052)& 2.32 (0.131)\\ 
 \hline
 \hline
Dantzig $L_1$  & 3.13 (0.177) & 2.70 (0.146) &  3.15 (0.140) & 3.21 (0.136)\\  
 
Dantzig  $L_F$ & 3.16 (0.073) & 3.06 (0.172)& 3.58 (0.116) & 3.75 (0.173)\\  

Robust-Dantzig  $L_1$ & 1.80 (0.069) & 1.82 (0.051)& 1.72 (0.047)& 1.51 (0.073)\\  

Robust-Dantzig $L_F$  & 2.78 (0.071) & 2.01 (0.104)& 2.77 (065)& 2.45 (0.090)\\ 
 \hline
\end{tabular}
\end{center}
\end{table}

\begin{table}
\begin{center}
\small
\begin{tabular}{|c|c|c|c|c|}
 \hline
$p=500, n=100$ &  Banded & Block & Toeplitz & Random \\ 
\hline 
Lasso $L_\infty$  & 4.99 (0.091) & 4.12 (0.043) & 4.27 (0.052) & 4.49  (0.019)\\  
 
Lasso  $L_F$ & 8.16 (0.070) & 7.98 (0.004) & 8.05 (0.021) & 7.82 (0.052)\\  

Robust-Lasso  $L_\infty$ & 4.80 (0.012) & 3.31 (0.015)& 3.55 (0.051)& 3.40 (0.017)\\  

Robust-Lasso $L_F$  &  7.51 (0.120) & 7.50(0.177)& 7.69 (0.158) & 6.69 (0.220)\\
 \hline
 \hline
Dantzig $L_1$  & 5.03 (0.070) & 5.64 (0.034) & 5.18 (0.055) & 5.43 (0.050)\\  
 
Dantzig  $L_F$ & 8.64 (0.169) & 9.03 (0.199) & 9.18 (0.222) & 8.43  (0.192)\\  

Robust-Dantzig  $L_1$ & 4.51 (0.030) & 4.50 (0.017)& 4.69 (0.037) & 4.69 (0.034)\\  

Robust-Dantzig $L_F$  & 7.11 (0.123) & 7.05 (0.102)& 7.09 (0.099)& 6.76 (0.122)\\ 
 \hline
\end{tabular}
\end{center}
\end{table}

From statistical perspective, the tables indicate that both of our robust estimation methods outperform the regular Lasso and Dantzig, when the innovation vectors have fat tail and the transition matrix enjoys a sparsity pattern. 
In a nutshell, our robust methods is more advantageous in tackling non-Gaussian time series.

\section{Concluding Remarks}
Time series regression arises in a wide range of disciplines. Conventional tools are inadequate when it involves high dimensional temporal dependent and fat-tailed data. In this paper, we develop a novel Bernstein inequality for high dimensional linear processes, with the help of which, we have made contributions towards the robust estimation theory of high dimensional time series regression in the presence of fat tails. The convergence rate depends
on the strength of temporal and cross-sectional dependence, the moment condition, the dimension and the sample size. We allow the dimension to increase exponentially with the sample size as a natural requirement of consistency. To perform statistical inference of the estimates 
such as hypothesis
testing and construction of confidence intervals, one needs to develop the deeper result in terms of asymptotic distributional theory. The latter is more challenging and we leave it as future work.

\bibliographystyle{apalike}	 

\begin{thebibliography}{}

\bibitem[Adamczak et~al., 2008]{adamczak2008tail}
Adamczak, R. et~al. (2008).
\newblock A tail inequality for suprema of unbounded empirical processes with
  applications to markov chains.
\newblock {\em Electronic Journal of Probability}, 13:1000--1034.

\bibitem[Alfons et~al., 2013]{alfons2013sparse}
Alfons, A., Croux, C., and Gelper, S. (2013).
\newblock Sparse least trimmed squares regression for analyzing
  high-dimensional large data sets.
\newblock {\em Annals of Applied Statistics}, 7(1):226--248.

\bibitem[Basu and Michailidis, 2015]{basu2015regularized}
Basu, S. and Michailidis, G. (2015).
\newblock Regularized estimation in sparse high-dimensional time series models.
\newblock {\em The Annals of Statistics}, 43(4):1535--1567.

\bibitem[Bernstein, 1946]{bernstein1946theory}
Bernstein, S. (1946).
\newblock {\em The Theory of Probabilities}.
\newblock Gastehizdat Publishing House, Moscow.

\bibitem[Bhattacharjee and Bose, 2014]{bhattacharjee2014estimation}
Bhattacharjee, M. and Bose, A. (2014).
\newblock Estimation of autocovariance matrices for infinite dimensional vector
  linear process.
\newblock {\em Journal of Time Series Analysis}, 35(3):262--281.

\bibitem[Bhattacharjee and Bose, 2016]{bhattacharjee2016large}
Bhattacharjee, M. and Bose, A. (2016).
\newblock Large sample behaviour of high dimensional autocovariance matrices.
\newblock {\em The Annals of Statistics}, 44(2):598--628.

\bibitem[Bickel and Levina, 2008]{bickel2008covariance}
Bickel, P.~J. and Levina, E. (2008).
\newblock Covariance regularization by thresholding.
\newblock {\em The Annals of statistics}, 36(6):2577--2604.

\bibitem[Bickel et~al., 2009]{bickel2009simultaneous}
Bickel, P.~J., Ritov, Y., Tsybakov, A.~B., et~al. (2009).
\newblock Simultaneous analysis of lasso and dantzig selector.
\newblock {\em The Annals of Statistics}, 37(4):1705--1732.

\bibitem[Brockwell and Davis, 2009]{brockwell2009time}
Brockwell, P.~J. and Davis, R.~A. (2009).
\newblock {\em Time series: theory and methods}.
\newblock Springer science \& business media.

\bibitem[B{\"u}hlmann and Van De~Geer, 2011]{buhlmann2011statistics}
B{\"u}hlmann, P. and Van De~Geer, S. (2011).
\newblock {\em Statistics for high-dimensional data: methods, theory and
  applications}.
\newblock Springer Science \& Business Media.

\bibitem[Cai et~al., 2011]{cai2011constrained}
Cai, T., Liu, W., and Luo, X. (2011).
\newblock A constrained {$\ell_1$} minimization approach to sparse precision
  matrix estimation.
\newblock {\em Journal of the American Statistical Association},
  106(494):594--607.

\bibitem[Cai and Zhou, 2012]{cai2012optimal}
Cai, T.~T. and Zhou, H.~H. (2012).
\newblock Optimal rates of convergence for sparse covariance matrix estimation.
\newblock {\em The Annals of Statistics}, 40(5):2389--2420.

\bibitem[Candes et~al., 2007]{candes2007dantzig}
Candes, E., Tao, T., et~al. (2007).
\newblock The dantzig selector: Statistical estimation when p is much larger
  than n.
\newblock {\em The annals of Statistics}, 35(6):2313--2351.

\bibitem[Chen and Wu, 2018]{chen2018concentration}
Chen, L. and Wu, W. (2018).
\newblock Concentration inequalities for empirical processes of linear time
  series.
\newblock {\em Journal of Machine Learning Research}, 18(231):1--46.

\bibitem[Chen et~al., 2016]{chen2016regularized}
Chen, X., Xu, M., and Wu, W.~B. (2016).
\newblock Regularized estimation of linear functionals of precision matrices
  for high-dimensional time series.
\newblock {\em IEEE Transactions on Signal Processing}, 64(24):6459--6470.

\bibitem[Chernozhukov et~al., 2021]{chernozhukov2021lasso}
Chernozhukov, V., H{\"a}rdle, W.~K., Huang, C., and Wang, W. (2021).
\newblock Lasso-driven inference in time and space.
\newblock {\em The Annals of Statistics}, 49(3):1702--1735.

\bibitem[Cont, 2001]{cont2001empirical}
Cont, R. (2001).
\newblock Empirical properties of asset returns: stylized facts and statistical
  issues.
\newblock {\em Quantitative Finance}, 1(2):223--236.

\bibitem[Doukhan and Louhichi, 1999]{doukhan1999new}
Doukhan, P. and Louhichi, S. (1999).
\newblock A new weak dependence condition and applications to moment
  inequalities.
\newblock {\em Stochastic processes and their applications}, 84(2):313--342.

\bibitem[Fan et~al., 2017]{fan2017estimation}
Fan, J., Li, Q., and Wang, Y. (2017).
\newblock Estimation of high dimensional mean regression in the absence of
  symmetry and light tail assumptions.
\newblock {\em Journal of the Royal Statistical Society: Series B (Statistical
  Methodology)}, 79(1):247--265.

\bibitem[Fan and Li, 2001]{fan2001variable}
Fan, J. and Li, R. (2001).
\newblock Variable selection via nonconcave penalized likelihood and its oracle
  properties.
\newblock {\em Journal of the American statistical Association},
  96(456):1348--1360.

\bibitem[Fan et~al., 2011]{fan2011sparse}
Fan, J., Lv, J., and Qi, L. (2011).
\newblock Sparse high dimensional models in economics.
\newblock {\em Annual review of economics}, 3:291--317.

\bibitem[Friston, 2011]{friston2011functional}
Friston, K.~J. (2011).
\newblock Functional and effective connectivity: a review.
\newblock {\em Brain connectivity}, 1(1):13--36.

\bibitem[Golub and Van~Loan, 2013]{golub2013matrix}
Golub, G.~H. and Van~Loan, C.~F. (2013).
\newblock {\em Matrix computations, 4th edition}.
\newblock Johns Hopkins University Press.

\bibitem[Gourieroux and Jasiak, 2011]{Gourieroux2011}
Gourieroux, C. and Jasiak, J. (2011).
\newblock {\em Financial Econometrics: Problems, Models, and Methods}.
\newblock Princeton University Press.

\bibitem[Guo et~al., 2016]{guo2016high}
Guo, S., Wang, Y., and Yao, Q. (2016).
\newblock High-dimensional and banded vector autoregressions.
\newblock {\em Biometrika}, 103(4):889--903.

\bibitem[Gupta, 2012]{gupta2012note}
Gupta, S. (2012).
\newblock A note on the asymptotic distribution of lasso estimator for
  correlated data.
\newblock {\em Sankhya A}, 74(1):10--28.

\bibitem[Han et~al., 2015]{han2015direct}
Han, F., Lu, H., and Liu, H. (2015).
\newblock A direct estimation of high dimensional stationary vector
  autoregressions.
\newblock {\em Journal of Machine Learning Research}, 16:3115--3150.

\bibitem[Hang and Steinwart, 2017]{hang2017bernstein}
Hang, H. and Steinwart, I. (2017).
\newblock A bernstein-type inequality for some mixing processes and dynamical
  systems with an application to learning.
\newblock {\em The Annals of Statistics}, 45(2):708--743.

\bibitem[Harvey, 1990]{harvey1990econometric}
Harvey, A.~C. (1990).
\newblock {\em The econometric analysis of time series}.
\newblock Mit Press.

\bibitem[Hsu et~al., 2008]{hsu2008subset}
Hsu, N.-J., Hung, H.-L., and Chang, Y.-M. (2008).
\newblock Subset selection for vector autoregressive processes using lasso.
\newblock {\em Computational Statistics \& Data Analysis}, 52(7):3645--3657.

\bibitem[Huber, 1964]{huber1964robust}
Huber, P. (1964).
\newblock Robust estimation of a location parameter.
\newblock {\em The Annals of Mathematical Statistics}, 35(1):73--101.

\bibitem[Huber, 1973]{huber1973robust}
Huber, P.~J. (1973).
\newblock Robust regression: asymptotics, conjectures and monte carlo.
\newblock {\em The annals of statistics}, 1(5):799--821.

\bibitem[Kaul, 2014]{kaul2014lasso}
Kaul, A. (2014).
\newblock Lasso with long memory regression errors.
\newblock {\em Journal of Statistical Planning and Inference}, 153:11--26.

\bibitem[Kim et~al., 2012]{kim2012measuring}
Kim, Y., Giacometti, R., Rachev, S., Fabozzi, F., and Mignacca, D. (2012).
\newblock Measuring financial risk and portfolio optimization with a
  non-gaussian multivariate model.
\newblock {\em Annals of operations research}, 201(1):325--343.

\bibitem[Kock and Callot, 2015]{kock2015oracle}
Kock, A.~B. and Callot, L. (2015).
\newblock Oracle inequalities for high dimensional vector autoregressions.
\newblock {\em Journal of Econometrics}, 186(2):325--344.

\bibitem[Kondrashov et~al., 2005]{kondrashov2005hierarchy}
Kondrashov, D., Kravtsov, S., Robertson, A.~W., and Ghil, M. (2005).
\newblock A hierarchy of data-based enso models.
\newblock {\em Journal of climate}, 18(21):4425--4444.

\bibitem[Koopman and Lucas, 2008]{koopman2008non}
Koopman, S.~J. and Lucas, A. (2008).
\newblock A non-gaussian panel time series model for estimating and decomposing
  default risk.
\newblock {\em Journal of Business \& Economic Statistics}, 26(4):510--525.

\bibitem[Ledoit and Wolf, 2003]{ledoit2003improved}
Ledoit, O. and Wolf, M. (2003).
\newblock Improved estimation of the covariance matrix of stock returns with an
  application to portfolio selection.
\newblock {\em Journal of empirical finance}, 10(5):603--621.

\bibitem[Liu et~al., 2015]{liu2015marvcenko}
Liu, H., Aue, A., and Paul, D. (2015).
\newblock On the mar{\v{c}}enko--pastur law for linear time series.
\newblock {\em The Annals of Statistics}, 43(2):675--712.

\bibitem[Loh, 2017]{loh2017statistical}
Loh, P.-L. (2017).
\newblock Statistical consistency and asymptotic normality for high-dimensional
  robust $ m $-estimators.
\newblock {\em The Annals of Statistics}, 45(2):866--896.

\bibitem[Loh, 2021]{loh2021scale}
Loh, P.-L. (2021).
\newblock Scale calibration for high-dimensional robust regression.
\newblock {\em Electronic Journal of Statistics}, 15(2):5933--5994.

\bibitem[Loh and Wainwright, 2012]{loh2012high}
Loh, P.-L. and Wainwright, M.~J. (2012).
\newblock High-dimensional regression with noisy and missing data: Provable
  guarantees with nonconvexity.
\newblock {\em The Annals of Statistics}, 40(3):1637.

\bibitem[L{\"u}tkepohl, 2005]{lutkepohl2005new}
L{\"u}tkepohl, H. (2005).
\newblock {\em New introduction to multiple time series analysis}.
\newblock Springer Science \& Business Media.

\bibitem[Mammen, 1989]{mammen1989asymptotics}
Mammen, E. (1989).
\newblock Asymptotics with increasing dimension for robust regression with
  applications to the bootstrap.
\newblock {\em The Annals of Statistics}, 17(1):382--400.

\bibitem[Meinshausen and Yu, 2009]{meinshausen2009lasso}
Meinshausen, N. and Yu, B. (2009).
\newblock Lasso-type recovery of sparse representations for high-dimensional
  data.
\newblock {\em The annals of statistics}, 37(1):246--270.

\bibitem[Merlev{\`e}de et~al., 2009]{merlevede2009bernstein}
Merlev{\`e}de, F., Peligrad, M., Rio, E., et~al. (2009).
\newblock Bernstein inequality and moderate deviations under strong mixing
  conditions.
\newblock In {\em High dimensional probability V: the Luminy volume}, pages
  273--292. Institute of Mathematical Statistics.

\bibitem[Nardi and Rinaldo, 2011]{nardi2011autoregressive}
Nardi, Y. and Rinaldo, A. (2011).
\newblock Autoregressive process modeling via the lasso procedure.
\newblock {\em Journal of Multivariate Analysis}, 102(3):528--549.

\bibitem[Raskutti et~al., 2011]{raskutti2011minimax}
Raskutti, G., Wainwright, M.~J., and Yu, B. (2011).
\newblock Minimax rates of estimation for high-dimensional linear regression
  over $\ell_q$-balls.
\newblock {\em IEEE transactions on information theory}, 57(10):6976--6994.

\bibitem[Reinsel, 2003]{reinsel1997elements}
Reinsel, G.~C. (2003).
\newblock {\em Elements of multivariate time series analysis}.
\newblock Springer Science \& Business Media.

\bibitem[Rothman et~al., 2009]{rothman2009generalized}
Rothman, A.~J., Levina, E., and Zhu, J. (2009).
\newblock Generalized thresholding of large covariance matrices.
\newblock {\em Journal of the American Statistical Association},
  104(485):177--186.

\bibitem[Sameshima and Baccala, 2014]{sameshima2014methods}
Sameshima, K. and Baccala, L.~A. (2014).
\newblock {\em Methods in Brain Connectivity Inference Through Multivariate
  Time Series Analysis}.
\newblock CRC press.

\bibitem[Shojaie et~al., 2012]{shojaie2012adaptive}
Shojaie, A., Basu, S., and Michailidis, G. (2012).
\newblock Adaptive thresholding for reconstructing regulatory networks from
  time-course gene expression data.
\newblock {\em Statistics in Biosciences}, 4(1):66--83.

\bibitem[Shumway et~al., 2000]{shumway2000time}
Shumway, R.~H., Stoffer, D.~S., and Stoffer, D.~S. (2000).
\newblock {\em Time series analysis and its applications}, volume~3.
\newblock Springer.

\bibitem[Sims, 1980]{sims1980macroeconomics}
Sims, C.~A. (1980).
\newblock Macroeconomics and reality.
\newblock {\em Econometrica}, 48(1):1--48.

\bibitem[Sims, 1992]{sims1992interpreting}
Sims, C.~A. (1992).
\newblock Interpreting the macroeconomic time series facts: The effects of
  monetary policy.
\newblock {\em European Economic Review}, 36(5):975--1000.

\bibitem[Stock and Watson, 2001]{stock2001vector}
Stock, J.~H. and Watson, M.~W. (2001).
\newblock Vector autoregressions.
\newblock {\em Journal of Economic perspectives}, 15(4):101--115.

\bibitem[Sun et~al., 2020]{sun2020adaptive}
Sun, Q., Zhou, W.-X., and Fan, J. (2020).
\newblock Adaptive huber regression.
\newblock {\em Journal of the American Statistical Association},
  115(529):254--265.

\bibitem[Tibshirani, 1996]{tibshirani1996regression}
Tibshirani, R. (1996).
\newblock Regression shrinkage and selection via the lasso.
\newblock {\em Journal of the Royal Statistical Society: Series B
  (Methodological)}, 58(1):267--288.

\bibitem[Tsay, 1984]{tsay1984regression}
Tsay, R.~S. (1984).
\newblock Regression models with time series errors.
\newblock {\em Journal of the American Statistical Association},
  79(385):118--124.

\bibitem[Tsay, 2005]{tsay2005analysis}
Tsay, R.~S. (2005).
\newblock {\em Analysis of financial time series}.
\newblock John wiley \& sons.

\bibitem[Wang et~al., 2007]{wang2007regression}
Wang, H., Li, G., and Tsai, C.-L. (2007).
\newblock Regression coefficient and autoregressive order shrinkage and
  selection via the lasso.
\newblock {\em Journal of the Royal Statistical Society: Series B (Statistical
  Methodology)}, 69(1):63--78.

\bibitem[Wang et~al., 2013]{wang2013robust}
Wang, X., Jiang, Y., Huang, M., and Zhang, H. (2013).
\newblock Robust variable selection with exponential squared loss.
\newblock {\em Journal of the American Statistical Association},
  108(502):632--643.

\bibitem[Wu and Wu, 2016]{wu2016performance}
Wu, W.-B. and Wu, Y.~N. (2016).
\newblock Performance bounds for parameter estimates of high-dimensional linear
  models with correlated errors.
\newblock {\em Electronic Journal of Statistics}, 10(1):352--379.

\bibitem[Yohai and Maronna, 1979]{yohai1979asymptotic}
Yohai, V.~J. and Maronna, R.~A. (1979).
\newblock Asymptotic behavior of $ m $-estimators for the linear model.
\newblock {\em The Annals of Statistics}, 7(2):258--268.

\bibitem[Yoon et~al., 2013]{yoon2013penalized}
Yoon, Y.~J., Park, C., and Lee, T. (2013).
\newblock Penalized regression models with autoregressive error terms.
\newblock {\em Journal of Statistical Computation and Simulation},
  83(9):1756--1772.

\bibitem[Zhang, 2021]{zhang2019robust}
Zhang, D. (2021).
\newblock Robust estimation of the mean and covariance matrix for high
  dimensional time series.
\newblock {\em Statistica Sinica}, 31(2):797--820.

\bibitem[Zou and Hastie, 2005]{zou2005regularization}
Zou, H. and Hastie, T. (2005).
\newblock Regularization and variable selection via the elastic net.
\newblock {\em Journal of the royal statistical society: series B (statistical
  methodology)}, 67(2):301--320.

\end{thebibliography}
\begin{bibliography}{ref_bernstein}
\end{bibliography}

\newpage
\begin{appendices}
\section{Simulation on Time Series Regression}
In this section, we evaluate the finite sample performance of the modified $\ell_1$-regularized Huber estimator proposed in Section 3.1 compared with the regular Huber estimator. 
We generate the linear process $\{X_i\}_{i=1}^n$ from a VAR model,
$$X_{i+1}=AX_i+\epsilon_i,$$
where we consider a Toeplitz transition matrix $A = (\lambda^{|i-j|})$ with $\lambda = 0.5$ and further scaled by $2\lambda_{\max}(A)$ to ensure the stationarity of $(X_i)$.
The innovation vectors $\epsilon_{i}$ have i.i.d.~coordinates drawn from Student's $t$-distribution with $\text{df}=5$. We construct $\xi_i$ in (3.1) as
$$\xi_i=\sum_{k=0}^\infty b_k\eta_{i-k},$$
where $b_k=\rho^k$ with a $\rho$ drawn from $Unif(-0.8, 0.8)$ and $\eta_i$ follows a Student's $t$-distribution with $\text{df}=5$ and is independent of $\epsilon_i$. Recall the linear model
$$Y_i=X_i^\top\beta^*+\xi_i.$$
We choose $\beta^*=(1,1,\dots,1,0,0,\dots,0)^\top$ with $s$ elements of value 1 and $p-s$ elements of value 0 for $s=2\lfloor \log (p)\rfloor$. In weight function $$w(x)=\min\bigg\{1, \frac b{|Bx|_2}\bigg\},$$ 
we select $b=5, 15, 50, 100$ and $B=I_p$. The simulation results with different $n, p$ are summarized in Table \ref{ts-regression}. We observe that neither small $b$ nor large $b$ can be consistently beneficial. The weight function with a small $b$ shrinks the covariates too aggressively, hence discards too much information of the tail behavior of the linear process. Large $b$ makes the shrinkage less effective, hence approaches the Huber estimator.
\begin{table}
\small
\caption{Experiment results on time series regression.}
\vspace{-20pt}
\label{ts-regression}
\begin{center}
\begin{tabular}{ |c|cccc|}
\hline 
$(n, p)$ & $(100, 10)$  & $(100, 100)$ & $(100, 500)$  & $(100, 1000)$\\
 \hline
 Huber  & 0.77 (0.090) & 3.64 (0.120) & 4.37 (0.079)  & 5.65 (0.064)\\
 \hline
  Weighted Huber ($b=5$) & 0.80 (0.043) & 4.25 (0.069) & 4.55 (0.082) & 5.89 (0.041)\\
\hline
 Weighted Huber ($b=15$) & 0.68 (0.035) & 3.15 (0.092) & 4.15 (0.035) & 5.28 (0.161)\\
\hline
 Weighted Huber ($b=50$) & 0.87 (0.042) &  3.24 (0.155) & 4.00 (0.090) & 5.11 (0.049)\\
\hline
 Weighted Huber ($b=100$) & 0.70 (0.086) & 3.70 (0.086) & 4.26 (0.135) & 5.30 (0.077) \\
 \hline
\end{tabular}
\end{center}
\end{table}

\section{Proofs of Results in Section 2}
In this section, we provide the proofs of the results presented in Section 2. 
\begin{proof}[Proof of Theorem 2.1]
We first define the filtration $\{\mathcal{F}_i\}$ with the $\sigma$-field $\mathcal{F}_i=\sigma(\boldsymbol{\varepsilon}_i,\boldsymbol{\varepsilon}_{i-1},\dots)$, and the projection operator $P_j(\cdot)=\mE(\cdot|\mathcal{F}_j)-\mE(\cdot|\mathcal{F}_{j-1})$. Conventionally it follows that $P_j(G(X_i))=0$ for $j\geq i+1$. We can write
$$\sum_{i=1}^nG(X_i)-\mE G(X_i)=\sum_{j=-\infty}^n\Big(\sum_{i=1}^nP_j(G(X_i))\Big)=:\sum_{j=-\infty}^nL_j,$$
where $L_j=\sum_{i=1}^nP_j(G(X_i))$. By the Markov inequality, for any $\lambda>0$, 
\begin{eqnarray}
\label{P}
&&	\mP\bigg(\sum_{i=1}^nG(X_i)-\mE G(X_i)\geq 2x\bigg)
	\leq \mP\bigg(\sum_{j=-\infty}^{0}L_j\geq x\bigg)+\mP\bigg(\sum_{j=1}^nL_j\geq x\bigg)\cr
&\leq & \me^{-\lambda x}\mE\bigg[\exp\bigg\{\lambda\sum_{j=-\infty}^{0}L_j\bigg\}\bigg]+\me^{-\lambda x}\mE\bigg[\exp\bigg\{\lambda\sum_{j=1}^{n}L_j\bigg\}\bigg].
\end{eqnarray}
We shall bound the right-hand side of \eqref{P} with a suitable choice of $\lambda>0$. Observing that $\{L_j\}_{j\leq n}$ is a sequence of martingale differences with respect to $\{\mathcal{F}_j\}$, we firstly seek an upper bound on $\mE[\me^{\lambda L_j}\bigr|\mF_{j-1}]$. 
By the Lipschitz condition (2.6) and the boundedness of $G$, it follows that
\begin{align}\label{lipscond}
	|L_j|
		 &\leq \sum_{i=1\lor j}^n\min\left\{\left|\mE\left[G(X_i)\bigr|\mF_j\right]-\mE\left[G(X_i)|\mF_{j-1}\right]\right|, 2M\right\}\notag\\
		 &\leq\sum_{i=1\lor j}^n\min\left\{g^\top|A_{i-j}|\mE\left[|\boldsymbol{\varepsilon}_j-\boldsymbol{\varepsilon}_j'|\bigr|\mF_j\right], 2M\right\},
\end{align}
where $\varepsilon'_j$ is an i.i.d.~copy of $\boldsymbol{\varepsilon}_j$. 
For notational convenience, we denote
$b_i^\top=g^\top|A_i|$ and $\eta_j=\mE(|\boldsymbol{\varepsilon}_j-\boldsymbol{\varepsilon}_j'|\big|\mF_j)$. Then we have
\begin{align*}|L_j|&\leq{2M\sum_{i=1\lor j}^n\mb{I}(b_{i-j}^\top\eta_j\geq 2M)}+{\sum_{i=1\lor j}^n b_{i-j}^\top\eta_j\mb{I}(b_{i-j}^\top\eta_j\leq 2M)}=:I_j+II_j.
\end{align*}
For $j\leq 0$ and $k\geq2$, by the triangle inequality,  it holds that
\begin{eqnarray}\label{L_jbound}\mE[|L_j|^k\big|\mF_{j-1}]&\leq&\left[\left(\mE[|I_j|^k\big|\mF_{j-1}]\right)^{1/k}+\left(\mE[|II_j|^k\big|\mF_{j-1}]\right)^{1/k}\right]^k\cr
&\leq&\left(\|I_j\|_k+\|{II}_j\|_k\right)^k.\end{eqnarray}
Moreover,
\begin{align}\label{more}
\|{I}_j\|_k\leq2M\sum_{i=-j}^\infty \big\|\mb{I}(b_{i}^\top\eta_j\geq2M)\big\|_k
	&\leq 2M\sum_{i=-j}^\infty\Big[\mP\big((b_{i}^\top\eta_j)^2\geq (2M)^2\big)\Big]^{1/k}.	
\end{align}
Recall the definitions of $\gamma$ and $\tau$. We have
$|b_i|_1\leq \gamma\rho_0^{i/\tau},$
which implies
$$\mE[(b_i^\top\eta_j)^2]\leq2\sigma^2|b_i|_1^2\leq 2\gamma^2\sigma^2\rho_0^{2i/\tau}, \text{ for all }j.$$
By the Markov inequality, we obtain from \eqref{more} that for $k\geq2$,
\begin{align}\label{I}
	\|{I}_j\|_k\leq
	2M\left(\frac{\gamma\sigma}{\sqrt{2} M}\right)^{2/k}\frac{\rho_0^{-2j/k\tau}}{1-\rho_0^{2/k\tau}}.
\end{align}
In view of the fact $1-x\geq -x\log x$ for $x\in(0,1)$, we can further relax the bound in \eqref{I}. Applying the Stirling formula, for $k\geq2$, we can obtain
\begin{eqnarray*}
	\|I_j\|_k^k&\leq& k^k\tau^k\rho_0^{-2/\tau}\left(\frac M{\log(1/\rho_0)}\right)^k\left(\frac{\gamma\sigma}{\sqrt{2} M}\right)^{2}\rho_0^{-2j/\tau}\cr
	&\leq &\frac{1}{2\sqrt {2\pi}}\left(\frac{\gamma\sigma}{\rho_0 M}\right)^2k!\tau^k\left(\frac {\me M}{\log(1/\rho_0)}\right)^k\rho_0^{-2j/\tau}.
\end{eqnarray*}
Define the constants \begin{align*}
 	C_1=\frac{1}{2\sqrt {2\pi}}\rho_0^{-2},\quad\text{and}\quad
 	C_2=\frac {\me }{\log(1/\rho_0)}.
 \end{align*}
Then we can simply write
\begin{align}
\label{eq: I}\|I_j\|_k^k\leq C_1k!\tau^kC_2^kM^{k-2}\gamma^2\sigma^2\rho_0^{-2j/\tau}.
\end{align}
Analogously, for $k\geq2$, we can also get
\begin{align}\label{eq: II}
	\|{II}_j\|_k^k
	\leq \Big[\sum_{i=-j}^\infty \big\{\mE\big[(b_i^\top\eta_{j})^2\left(2M\right)^{k-2}\big]\big\}^{1/k}\Big]^k
	\leq C_1k!\tau^kC_2^kM^{k-2}\gamma^2\sigma^2\rho_0^{-2j/\tau}.
\end{align}
By \eqref{L_jbound}, (\ref{eq: I}) and (\ref{eq: II}), we have
\begin{equation}\label{bd}
	\mE[|L_j|^k\big|\mF_{j-1}]\leq C_1k!\tau^k (C_2')^kM^{k-2}\gamma^2\sigma^2\rho_0^{-2j/\tau},
\end{equation}
where $C_2'=2C_2=2\me /\log(1/\rho_0)$. Now we are ready to derive an upper bound for $\mE[\me^{\lambda L_{j}}\big|\mF_{j-1}]$. By the Taylor expansion, we have
	$$\mE[\me^{\lambda L_j}|\mF_{j-1}]=1+\mE[\lambda L_j|\mF_{j-1}]+\sum_{k=2}^\infty\frac1{k!}\mE[\lambda^k L_j^k|\mF_{j-1}].$$
Notice that $\mE[L_j\bigr|\mF_{j-1}]=0$. For $0<\lambda<(C_2'M\tau)^{-1}$, we have
\begin{eqnarray*}
	\mE[\me^{\lambda L_{j}}\big|\mF_{j-1}]&\leq& 1+C_1M^{-2}\gamma^2\sigma^2\rho_0^{-2j/\tau}\sum_{k=2}^\infty \Big(C_2'M\tau\lambda\Big)^k\cr
	&\leq&\exp\left\{\frac{C_1'\gamma^2\sigma^2\tau^2\rho_0^{-2j/\tau}\lambda^2}{1-C_2'M\tau\lambda}\right\},
\end{eqnarray*}
where the constant $$C_1'=C_1(C_2')^2=\frac{1}{2\sqrt {2\pi}}\left(\frac{2\me}{\rho_0\log(1/\rho_0)}\right)^2,$$
Thus, recursively conditioning on $\mF_0,\mF_{-1},\dots$, we have for $0<\lambda<(C_2'\tau)^{-1}$,
\begin{eqnarray*}
\mP\bigg(\sum_{j=-\infty}^{0}L_j\geq x\bigg)&\leq & \me^{-\lambda x}\mE\bigg[\exp\bigg\{\lambda\sum_{j=-\infty}^{0}L_j\bigg\}\bigg]\cr
&\leq &\me^{-\lambda x}\exp\left\{\frac{C_1'\gamma^2\sigma^2\tau^2(1-\rho_0^{2/\tau})^{-1}\lambda^2}{1-C_2'M\tau\lambda}\right\}.
\end{eqnarray*}
Specifically, choosing $\lambda=x[C_2'M\tau x+2C_1'\gamma^2\sigma^2\tau^2(1-\rho_0^{2/\tau})^{-1}]^{-1}$
yields
	\begin{align}\label{bernstein1}\mP\bigg(\sum_{j=-\infty}^{0}L_j\geq x\bigg)&\leq\exp\left\{-\frac{x^2}{4C_1'\gamma^2\sigma^2\tau^2(1-\rho_0^{2/\tau})^{-1}+2C_2'M\tau x}\right\}\notag\\
	&\leq\exp\left\{-\frac{x^2}{2C_1'\gamma^2\sigma^2\rho_0^{-2}(\log(1/\rho_0))^{-1}\tau^3+2C_2'M\tau x}\right\}\notag\\
	&=\exp\left\{-\frac{x^2}{C_1''\tau^3\gamma^2\sigma^2+2C_2'M\tau x}\right\},\end{align}
where $C_1''=2C_1'\rho_0^{-2}(\log(1/\rho_0))^{-1}$. We can deal with $L_j$ for $j\geq 1$ by similar arguments and obtain
\begin{align*}
	\mE[\me^{\lambda L_{j}}\big|\mF_{j-1}]\leq\exp\left\{\frac{C_1'\gamma^2\sigma^2\tau^2\lambda^2}{1-C_2'M\tau\lambda}\right\} \text{ for } j \geq 1.
\end{align*}
In a similar way as deriving \eqref{bernstein1}, it follows that
\begin{align}\label{bernstein2}
	\mP\left(\sum_{j=1}^{n}L_j\geq x\right)\leq \exp\left\{-\frac{x^2}{C_1''\gamma^2\sigma^2\tau^2n+2C_2'M\tau x}\right\}.
\end{align}
Combining (\ref{P}), \eqref{bernstein1} and \eqref{bernstein2}, we have
\begin{align*}\mP\Big(\sum_{i=1}^nG(X_i)-\mE[G(X_i)]\geq x\Big)\leq 2\exp\left\{-\frac{x^2}{4C_1''\tau^2(\tau\lor n)+4C_2'M\tau x}\right\},
\end{align*}
which implies (2.7) for $\tau \leq n$.
\end{proof}

\begin{proof}[Proof of Theorem 2.2] We follow the starting steps when proving Theorem 2.1. Without assuming $G$ bounded, we have
\begin{equation*}
 |L_j| \leq \sum_{i=1\lor j}^ng^\top|A_{i-j}|\mE\left[|\boldsymbol{\varepsilon}_j-\boldsymbol{\varepsilon}_j'|\bigr|\mF_j\right] = \sum_{i=1 \lor j}^n b_{i-j}^\top \eta_{j} =: d_j^\top \eta_j.
\end{equation*}
For $j \leq -\tau$, we have
\begin{equation}
\label{eq dj}
|d_j|_1 \leq 
\sum_{i=1}^n |b_{i-j}|_1\leq \gamma \frac{\rho_0^{1/\tau}}{1-\rho_0^{1/\tau}}\cdot \rho_0^{-j/\tau} \leq  (\log (1/\rho_0))^{-1} \gamma \tau \rho_0^{-j/\tau}.
\end{equation}
Note that
\begin{align}
\label{eq: lambda L}
    \mE[e^{\lambda|L_j|}|\mathcal{F}_{j-1}] \leq \mE[e^{\lambda d_j^\top\eta_j}|\mathcal{F}_{j-1}] = 
    \mE[e^{\lambda d_j^\top\eta_j}] \leq \mE[e^{\lambda d_j^\top (|\boldsymbol{\varepsilon}_j| + |\boldsymbol{\varepsilon}_j'|)}].
\end{align}
Let $\lambda^*=c_0 (\log (1/\rho_0))(\gamma\tau)^{-1}$ and $Y_j = \lambda^* d_j^\top (|\boldsymbol{\varepsilon}_j|+|\boldsymbol{\varepsilon}_{j}'|)\rho_0^{j/\tau}$. By (\ref{eq dj}) and (\ref{eq: lambda L}), it follows that for any $j \leq -\tau$, $\mE e^{Y_j} \leq \theta^2$ and
\begin{eqnarray*}
 \mE[e^{\lambda^*|L_j|}-1|\mathcal{F}_{j-1}] &\leq & \mE e^{Y_j \rho_0^{-j/\tau}} -1 = \int_{0}^\infty \rho_0^{-j/\tau} e^{x \rho_0^{-j/\tau}} \mP(Y_j \geq x) dx \cr
 &\leq & \int_{0}^\infty \rho_0^{-j/\tau} e^{x \rho_0^{-j/\tau}} e^{-x}\theta^2 dx \cr &\leq & \frac{\rho_0^{-j/\tau}\theta^2}{1-\rho_0^{-j/\tau}} \leq \frac{\rho_0^{-j/\tau}\theta^2}{1-\rho_0} .
\end{eqnarray*}
Since $\mE[L_j|\mathcal{F}_j]=0$, for any $0 < \lambda \leq \lambda^*$, 
\begin{eqnarray*}
\mE[e^{\lambda L_j}-1|\mathcal{F}_{j-1}] &=&  \mE[e^{\lambda L_j}-\lambda L_j -1|\mathcal{F}_{j-1}] \cr
&\leq & \mE[e^{\lambda |L_j|}-\lambda |L_j| -1|\mathcal{F}_{j-1}]\cr
&\leq & \mE[e^{\lambda^* |L_j|}-\lambda^* |L_j| -1|\mathcal{F}_{j-1}] \cdot \lambda^2/(\lambda^*)^2\cr
&\leq & \mE[e^{\lambda^* |L_j|} -1|\mathcal{F}_{j-1}] \cdot \lambda^2/(\lambda^*)^2,
\end{eqnarray*}
in view of $e^x-x \leq e^{|x|}-|x|$ for any $x$ and when $x>0$, $(e^{\lambda x}-\lambda x-1)/\lambda^2$ is increasing with $\lambda \in (0, \infty)$. Using $1+x\leq e^x$, we have
\begin{eqnarray*}
    \mE[e^{\lambda L_j}|\mathcal{F}_{j-1}] &\leq& 1+ \mE[e^{\lambda^*|L_j|}-1|\mathcal{F}_{j-1}] \cdot \lambda^2/(\lambda^*)^2 \cr
    &\leq & 1+ {C_1 \rho_0^{-j/\tau} \gamma^{2}\tau^2\theta^2 \lambda^2}\leq \exp\left\{ {C_1 \rho_0^{-j/\tau} \gamma^{2}\tau^2\theta^2 \lambda^2} \right\}.
\end{eqnarray*}
where $C =  c_0^{-2}(\log(1/\rho_0))^{-2}/(1-\rho)$, which implies that
\begin{align*}
	\mP\bigg(\sum_{j=-\infty}^{-\tau}L_j\geq x\bigg)&\leq \me^{-\lambda x}\mE\bigg[\exp\bigg\{\lambda\sum_{j=-\infty}^{-1}L_j\bigg\}\bigg]
	\leq \me^{-\lambda x}\exp\left\{{C_1\gamma^2\tau^3\theta^2\lambda^2}\right\}.
\end{align*}
with $C_1 = C (\log(1/\rho_0))^{-1}(\rho_0)^{-2}$.
For the cases when $j> -\tau$, we use the bound $|d_j|_1\leq (\rho_0\log (1/\rho_0))^{-1}\gamma \tau$ and obtain $ \mE[e^{\lambda L_j}|\mathcal{F}_{j-1}] \leq 1+ C_2 \gamma^2 \tau^2\theta^2 \lambda^2$ for $C_2 = C/\rho_0^2$ and
\begin{align}
	\mP\bigg(\sum_{j=-\tau+1}^{n}L_j \geq x \bigg)&\leq \exp\left\{-\lambda x + C_2 (n+\tau) \gamma^2 \tau^2 \theta^2\lambda^2\right\}.
\end{align}
Therefore (2.9) follows by choosing $$\lambda = \min\bigg\{\lambda^*, \  \frac{x}{2C_1\gamma^2\tau^3\theta^2}, \ \frac{x}{2C_2(n+\tau)\gamma^2\tau^2\theta^2},\bigg\}.$$
\end{proof}

By a slight modification of the Lipschitz condition \eqref{lipscond}, we can develop some ancillary results in Corollar \ref{corbern} and Corollary \ref{corbern2}, that can be useful in estimating time series regression models. The proof follows similarly from that of Theorem 2.1 without extra technical difficulty.



\begin{cor}\label{corbern}
Consider the same setting of the model as in Theorem 2.1. Let $G:\mb{R}^{2p}\to\mb{R}$ be a function with $|G(u)|\leq M$ for all $u\in\mb{R}^{2p}$. Suppose there exists a vector $g = (g_1, \ldots, g_{2p})^\top$ with $g_i\geq0$ for $1\leq i\leq 2p$ and $\sum_{i=1}^{2p} g_i =1$ such that
	\begin{equation*}
		|G(u)-G(v)|\leq \sum_{i=1}^{2p} g_i |u_i-v_i|,\text{ for all }u, v\in \mathbb{R}^{2p}.
		\end{equation*}
	Then for any $x>0$, we have
	\begin{align}\label{thm}\mP\Big(\sum_{i=1}^nG(X_{i},X_{i-1})-\mE G(X_{i},X_{i-1})\geq x\Big)&\leq 2\exp\bigg\{-\frac{x^2}{C_1' n\sigma^2\gamma^2\tau^2+C_2'\tau Mx}\bigg\}.
\end{align}



\begin{proof}[Proof of Corollary \ref{corbern}]
It follows from the fact that the $(2p)$-dimensional process $(X_i^\top, X_{i-1}^\top)^\top$ is also linear and satisfies the condition (2.3) with $\gamma$ multiplied by a constant depending on $\rho_0$ only. 
\end{proof}
\end{cor}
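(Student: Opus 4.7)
The plan is to reduce Corollary 2.2 directly to Theorem 2.1 by viewing the process of consecutive pairs as itself a linear process in dimension $2p$. Define $W_i := (X_i^\top, X_{i-1}^\top)^\top \in \mathbb{R}^{2p}$. Substituting the MA$(\infty)$ representation (2.1) into both blocks and re-indexing, I would write
$$W_i = \sum_{k=0}^\infty \tilde{A}_k \boldsymbol{\varepsilon}_{i-k}, \qquad \tilde{A}_k = \begin{pmatrix} A_k \\ A_{k-1}\end{pmatrix},$$
with the convention $A_{-1} = 0$. This exhibits $(W_i)$ as a stationary $(2p)$-dimensional linear process driven by exactly the same i.i.d.\ innovations $\boldsymbol{\varepsilon}_i$. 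The leading coefficient $\tilde{A}_0$ is not the $2p \times 2p$ identity, but this is harmless because the proof of Theorem 2.1 never uses the normalization $A_0 = I_p$; it only uses the MA representation and the decay condition (2.4).

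Next I would verify that (2.4) is inherited by the stacked coefficients with only a multiplicative adjustment of $\gamma$. For $k \geq 1$, $\|\tilde{A}_k\|^2 \leq \|A_k\|^2 + \|A_{k-1}\|^2 \leq 2\gamma^2 \rho_0^{2(k-1)/\tau}$, while $\|\tilde{A}_0\| = \|A_0\| = 1 \leq \gamma$. Hence
$$\|\tilde{A}_k\| \leq \tilde{\gamma}\,\rho_0^{k/\tau}, \qquad \tilde{\gamma} := \sqrt{2}\,\rho_0^{-1/\tau}\gamma \leq \sqrt{2}\,\rho_0^{-1}\gamma,$$
so condition (2.4) holds for $W_i$ with $\tilde{\tau}=\tau$ and $\tilde{\gamma}$ a universal constant multiple of $\gamma$.

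Finally, the uniform bound $|G(u)|\leq M$ on $\mathbb{R}^{2p}$ and the Lipschitz inequality with weight vector $g\in\mathbb{R}^{2p}_{\geq 0}$ satisfying $\sum_{i=1}^{2p} g_i = 1$ are precisely the hypotheses of Theorem 2.1 as applied to the $(2p)$-dimensional linear process $W_i$. Invoking Theorem 2.1 with parameters $(\tilde{\gamma},\tilde{\tau},M,\sigma)$ yields (2.17), where $C_1'$ differs from $C_1$ by the factor $\tilde{\gamma}^2/\gamma^2 \leq 2\rho_0^{-2}$ and $C_2'$ coincides with $C_2$ (since $\tilde{\tau}=\tau$). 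The only real obstacle is bookkeeping around the decay constant for the stacked process and checking that the identity-normalization of the leading coefficient is not secretly being used in the proof of Theorem 2.1; once the MA$(\infty)$ form of $(W_i)$ is written down, everything else follows from a direct black-box application of the previously established Bernstein bound.
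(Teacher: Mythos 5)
Your proposal is correct and is essentially the paper's own argument: the paper's proof simply asserts that the stacked process $(X_i^\top, X_{i-1}^\top)^\top$ is again a linear process satisfying the decay condition with $\gamma$ inflated by a constant depending only on $\rho_0$, and then applies Theorem 2.1 as a black box. You have merely filled in the bookkeeping (the stacked MA$(\infty)$ coefficients, the bound $\|\tilde{A}_k\|\leq \sqrt{2}\,\rho_0^{-1}\gamma\rho_0^{k/\tau}$, and the observation that the normalization $A_0=I_p$ is never used), all of which checks out.
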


\begin{cor}\label{corbern2}
Consider the same setting of the model as in Theorem 2.1. Let $G:\mb{R}^{p}\to\mb{R}$ be a function with $|G(u)|\leq M$ for all $u\in\mb{R}^{p}$. Assume that
	\begin{equation*}
		|G(u)-G(v)|\leq |u-v|_2,\text{ for all }u, v\in \mathbb{R}^{p}.
		\end{equation*}
Assume that $\log p >1$ and $\tau \log p \leq n$. Then for any $x>0$, we have
\begin{eqnarray}
\label{thm2}
&&\mP\Big(\sum_{i=1}^nG(X_i)-\mE G(X_i)\geq x\Big)\cr
&\leq & 2\exp\bigg\{-\frac{x^2}{C_1'' n(\sigma^2\gamma^2+M^2)\tau^2(\log p)^2+C_2''\tau M (\log p) x}\bigg\}.
\end{eqnarray}
\end{cor}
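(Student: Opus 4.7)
The plan is to mirror the martingale-decomposition proof of Theorem \ref{thmbern} as closely as possible, keeping the filtration $\mathcal{F}_j = \sigma(\boldsymbol{\varepsilon}_j, \boldsymbol{\varepsilon}_{j-1}, \ldots)$ and the projection operator $P_j(\cdot) = \mE(\cdot\mid \mathcal{F}_j) - \mE(\cdot\mid \mathcal{F}_{j-1})$. Write $\sum_{i=1}^n[G(X_i) - \mE G(X_i)] = \sum_{j=-\infty}^n L_j$ with $L_j = \sum_{i=1\vee j}^n P_j G(X_i)$. Coupling $\boldsymbol{\varepsilon}_j$ to an independent copy $\boldsymbol{\varepsilon}_j'$ and invoking the $\ell_2$-Lipschitz hypothesis together with the uniform bound $\|G\|_\infty\leq M$ gives
\[
|L_j| \leq \sum_{i=1\vee j}^n \min\bigl\{\|A_{i-j}\|\, \eta_j,\ 2M\bigr\},\qquad \eta_j := \mE\bigl[|\boldsymbol{\varepsilon}_j - \boldsymbol{\varepsilon}_j'|_2 \,\big|\, \mathcal{F}_j\bigr],
\]
where a direct calculation yields $\mE \eta_j^2 \leq 2p\sigma^2$. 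A mechanical reuse of the Theorem \ref{thmbern} machinery would therefore bloat the variance by a linear factor of $p$, so the entire novelty of the proof lies in trading this $p$ for the announced $(\log p)^2$.

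The key idea will be to exploit the geometric decay of $\|A_k\|$ to get a piecewise-concave envelope for $|L_j|$. Set $a_{\max}(j) = \gamma\rho_0^{((1\vee j)-j)/\tau}$; summing the geometric series in $i$ shows
\[
\sum_{i=1\vee j}^n \min\bigl\{\|A_{i-j}\|\eta_j,\ 2M\bigr\} \leq C\tau\min\Bigl\{a_{\max}(j)\eta_j,\ M\bigl(1+\log_+(a_{\max}(j)\eta_j/M)\bigr)\Bigr\},
\]
which is linear in $\eta_j$ below the threshold $\eta_j\asymp M/a_{\max}(j)$ and only logarithmic above it. I will bound $\mE L_j^2$ by splitting at this threshold. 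On the sub-threshold event, $\eta_j\leq 2M/a_{\max}(j)$ almost surely, which cancels the bad $p\sigma^2$ against the $a_{\max}(j)^2$ factor; the resulting contribution telescopes through $\sum_{j\leq 0}\rho_0^{2(1-j)/\tau}\leq C\tau$ and produces only $O(\tau^3(\sigma^2\gamma^2+M^2))$ across all $j$. On the super-threshold event, I will use the elementary estimate $\mE(\log_+ Y)^2 \leq (\log_+\sqrt{\mE Y^2})^2 + O(\log_+\sqrt{\mE Y^2})$ with $Y = a_{\max}(j)\eta_j/M$ and $\mE Y^2 \leq 2p\sigma^2 (a_{\max}(j)/M)^2$, giving $\mE(\log_+ Y)^2 \leq C(\log p)^2$. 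The two regimes combine to yield $\mE L_j^2 \leq C\tau^2(\sigma^2\gamma^2 + M^2)(\log p)^2$, and the hypothesis $\tau\log p\leq n$ keeps the $O(n)$ relevant $j$-indices consistent with the claimed variance scaling.

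The rest is a faithful template copy of Theorem \ref{thmbern}. I will extend the split-at-threshold argument from the second moment to a $k$-th conditional moment bound of the form $\mE[|L_j|^k \mid \mathcal{F}_{j-1}] \leq C_1\, k!\,\tau^k\,(C_2 M\log p)^{k-2}(\sigma^2\gamma^2 + M^2)(\log p)^2$, by controlling $\mE\min\{\|A_{i-j}\|\eta_j,2M\}^k$ exactly as in the proof of Theorem \ref{thmbern} but with the new thresholding. Inserting this into the Taylor expansion of $\mE[e^{\lambda L_j}\mid \mathcal{F}_{j-1}]$, recursively conditioning over $j$, and optimizing in $\lambda$ reproduces the desired bound (\ref{thm2}): the denominator $C_1''n(\sigma^2\gamma^2+M^2)\tau^2(\log p)^2$ comes from summing conditional second moments, while the sub-exponential factor $C_2''\tau M(\log p)\,x$ comes from the per-step $M\log_+(a_{\max}(j)\eta_j/M)$ bound.

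The hard part is the second paragraph. Once the $p\to(\log p)^2$ trade is in hand the rest is routine. Conceptually it is the interplay between the boundedness of $G$ (which saturates each summand of $L_j$ at $2M$) and the geometric decay of $\|A_k\|$ (which confines the saturation range to $O(\tau\log p)$ indices) that converts the heavy-tailed $\ell_2$-norm $|\boldsymbol{\varepsilon}_j|_2\asymp\sqrt{p}\sigma$ into a $\log p$ penalty, and the assumption $\tau\log p\leq n$ is exactly what is needed to keep this transitional regime within the $O(n)$ relevant martingale levels.
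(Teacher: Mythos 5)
Your proposal is sound and does deliver the corollary, but its key step is genuinely different from the paper's. The paper keeps the crude envelope $|L_j|\le\sum_{i\ge 1\vee j}\min\{\gamma\rho_0^{(i-j)/\tau}|\eta_j|_2,\,2M\}$ and splits the martingale \emph{levels} at the fixed lag $n_0=\lceil \tau\log p/\log(1/\rho_0)\rceil$: for $j\le -n_0$ the factor $p$ in $\mE|\eta_j|_2^2\le 2p\sigma^2$ is cancelled deterministically by $\rho_0^{2n_0/\tau}\le p^{-2}$ and the argument of Theorem 2.1 applies verbatim, while for $j>-n_0$ it simply pays $2n_0M\asymp\tau M\log p$ for the first $n_0$ summands and treats the deeper lags exactly as in \eqref{bd}, so that $\gamma^2\sigma^2$ enters only as a variance accompanied by $k!$ and never in the sub-exponential scale; this yields $\mE[|L_j|^k\mid\mF_{j-1}]\le C_3(C_4n_0M)^kk!(1+M^{-2}\gamma^2\sigma^2)$ and the claim after summing over the $n+n_0\lesssim n$ relevant levels. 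You instead keep all levels and, for each $j$, resolve the inner sum at its \emph{random} saturation point, obtaining the adaptive envelope $C\tau\min\{a_{\max}(j)\eta_j,\,M(1+\log_+(a_{\max}(j)\eta_j/M))\}$ and then bounding moments of the logarithm via $\mE\eta_j^2\le 2p\sigma^2$. Both routes exploit the same mechanism (the $2M$ cap saturates only $O(\tau\log p)$ summands, trading $\sqrt p$ for $\log p$); the paper's fixed cutoff makes the bookkeeping mechanical, while your version makes the origin of the $(\log p)^2$ more transparent and does not require choosing the cutoff a priori.

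One caution on your second paragraph: the claim $\mE(\log_+Y_j)^2\le C(\log p)^2$ (and its $k$-th moment analogue with scale $M\log p$) tacitly assumes $\log_+(\sigma\gamma/M)\lesssim\log p$, since $\log_+\sqrt{\mE Y_j^2}\lesssim\log p+\log_+(\sigma\gamma/M)$ and nothing in the corollary caps $\sigma\gamma/M$ by a power of $p$; used naively this inflates the sub-exponential scale from $\tau M\log p$ to $\tau M(\log p+\log_+(\sigma\gamma/M))$. The fix stays inside your framework: split $\log_+Y_j\le\log_+\sqrt{2p}+\log_+(\sigma\gamma/M)+\log_+(Y_j/\sqrt{\mE Y_j^2})$, keep the $\log p$ piece as a deterministic increment of size $\tau M\log p$, absorb the middle piece into the variance slot via $(\log_+x)^k\le (k/(2e))^k x^2$, which turns $M^k(\log_+(\sigma\gamma/M))^k$ into $k!\,C^kM^{k-2}\sigma^2\gamma^2$, and bound the last piece by $k!\,C^k$ via Markov; this legitimizes your stated per-level bound $\mE[|L_j|^k\mid\mF_{j-1}]\le C_1k!\tau^k(C_2M\log p)^{k-2}(\sigma^2\gamma^2+M^2)(\log p)^2$, after which the MGF recursion, the optimization in $\lambda$, and the use of $\tau\log p\le n$ proceed as you describe. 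Similarly, your sub-threshold total is really $O(\tau^3(\sigma^2\gamma^2+M^2\log p))$ rather than $O(\tau^3(\sigma^2\gamma^2+M^2))$, but this is harmless because it is dominated by the claimed variance term under $\tau\le n$.
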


\begin{proof}[Proof of Corollary \ref{corbern2}]
With a different Lipschitz condition on $G$, the step \eqref{lipscond} becomes
\begin{equation*}
|L_j| \leq \sum_{i=1\lor j}^n \min\{|A^{i-j} \eta_j|_2,2M\} \leq \sum_{i=1\lor j}^n \min\{\gamma \rho_0^{(i-j)/\tau}|\eta_j|_2,2M\}.
\end{equation*}
Note that $\mE|\eta_j|_2^2 \leq 2p \sigma^2$.
For $j \leq -n_0$ where $n_0 = \lceil \tau \log p/\log (1/\rho_0) \rceil $, by similar arguments in deriving (\ref{bernstein1}), it can be obtained that
	\begin{align}\mP\bigg(\sum_{j=-\infty}^{-n_0}L_j\geq x\bigg)
	\leq \exp\left\{-\frac{x^2}{C_1\tau^3+C_2M\tau x}\right\}.\end{align}
For $j > -n_0$, we have
\begin{equation*}
|L_j| \leq 2n_0M + \sum_{i = j+n_0}^\infty \min\{\gamma \rho_0^{(i-j)/\tau}|\eta_j|_2,2M\}.
\end{equation*}
Similarly as \eqref{bd}, we can get
\begin{eqnarray*}
\mE[|L_j|^k|\mathcal{F}_{j-1}] &\leq& 2^k[(2n_0M)^k + C_1'k!\tau^k (C_2')^kM^{k-2}\gamma^2\sigma^2]\cr
&\leq & C_3(C_4 n_0M)^kk!(1+M^{-2}\gamma^2\sigma^2),
\end{eqnarray*}
which further implies
\begin{equation*}
	\mE\Big[\exp\Big\{\lambda \sum_{j=-s+1}^nL_j\Big\}\Big]\leq\exp\bigg\{\frac{C_3C_4^2(M^2+\gamma^2\sigma^2) n_0^2(n_0+n)\lambda^2}{1- C_4n_0M\lambda}\bigg\},
\end{equation*}
and
\begin{align*}
	\mP\left(\sum_{j=-n_0+1}^{n}L_j\geq x\right)\leq \exp\left\{-\frac{x^2}{C_3'(M^2+\gamma^2\sigma^2)n_0^2 (n_0+n)+C_4' M\tau(\log p) x}\right\}.
\end{align*}
Then (\ref{thm2}) follows in view of $n_0 \leq C_{\rho_0} n$.
\end{proof}

\begin{proof}[Proof of Theorem 2.3]
Let $\hat{\mu}_j$ be the Huber estimator of $\mu_j$. Following similar arguments of proving Theorem 3.1 in \citet{zhang2019robust}, for
$$R_{nj}(a) = \sum_{i=1}^n [\phi_\nu(X_{ij}-a)-\mE \phi_{\nu}(X_{ij}-a)],$$
it can be obtained that for any $\delta > 0$ with $\nu^{-1}\delta \leq 1/2$,
\begin{equation*}
\mP(\hat{\mu}_j - \mu_j \geq \delta) \leq \mP(R_{nj}(\mu_j +\delta)\geq n(\delta - 4\nu^{-1}\mu_2^2)). 
\end{equation*}
By the Lipschitz continuity of the function $\phi_\nu$ and the uniform bound $|\phi_\nu(x)| \leq \nu$, applying Theorem 2.1 to $R_{nj}(\mu_j+\delta)$, it follows that 
\begin{equation*}
\mP(R_{nj}(\mu_j+\delta)\geq y) \leq 2\exp\bigg\{-\frac{y^2}{2C_1n\tau^2\gamma^2+C_2\tau \nu y}\bigg\}.
\end{equation*}
Then it follows that
\begin{equation*}
\mP(\hat{\mu}_j - \mu_j \geq \delta) \leq 2x
\end{equation*}
by letting $n(\delta -4\nu^{-1}\mu_2^2) = y = \tau\gamma\sqrt{2C_1n\log(1/x)} + C_2\tau \nu\log(1/x) $ for $0<x<1/\me$.
The requirement $\nu^{-1}\delta \leq 1/2$ is met if we choose $\nu = \frac{2\mu^*}{\sqrt{C_2 }}\sqrt{\frac{n}{\log (1/x)}}$ for any $\mu^* \geq \mu_2$ and impose the condition
\begin{equation*}
(\sqrt{2C_1C_2}\gamma/\mu_2+ 4C_2 )\tau\log(1/x)\leq n. 
\end{equation*}
For $\delta \leq \delta_n = (\sqrt{2C_1}\gamma + 4\sqrt{C_2}\mu^*)\tau\sqrt{\frac{\log(1/x)}{n}}$, we have $\mP(\hat{\mu}_j - \mu_j \geq \delta_n) \leq 2x.$ It can also be obtained that $\mP(\hat{\mu}_j - \mu_j \leq -\delta_n) \leq 2x$ similarly.
By letting $x = p^{-c-1}$, for some $c>0$, it follows that  
\begin{equation*}
\mP\Big(\max_{1\leq j\leq p}|\hat{\mu}_j -\mu_j|\geq \sqrt{c+1}(\sqrt{2 C_1}\gamma + 4\sqrt{C_2}\mu^*)\tau\sqrt{\frac{\log p}{n}}\Big) \leq 4p^{-c}.
\end{equation*}
which further implies (2.10).
\end{proof}

\section{Proofs of Results in Section 3}
This section includes all the proofs for the results on robust estimation of time series regressions presented in Section 3.

\subsection{Proofs of Results in Section 3}
Denote $L_n(\beta)=\frac{1}{n}\sum_{i=1}^n \Phi_{\nu}((Y_i-X_i^\top \beta)w(X_i))$ and $\phi_\nu(\cdot) = \Phi_\nu'(\cdot)$. Recall  $b_0 = b/\lambda_{\min}(B)$ and $\kappa(B)=\lambda_{\max}(B)/\lambda_{\min}(B)$.
\begin{lem}[Deviation bound]
\label{lem: 3.1.1}
Let Assumptions (A1) (A2) (A3) in Section 3.1 be satisfied. Let $\nu =c\sigma_{\eta}(n/\log p)^{1/2}$ and $\lambda = C b_0 \sigma_{\eta}(\log p/n)^{1/2}$ for a sufficiently large $C$, with probability at least $1-4p^{-c_1}$ for some $c_1>0$, it holds that
$|\nabla L_n(\beta^*)|_\infty \leq \lambda.$
\end{lem}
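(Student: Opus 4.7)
The plan is to decompose $\nabla L_n(\beta^*)$ coordinate-wise into a centered stochastic term and a deterministic bias term, bound each separately via the machinery of Section~2, and take a union bound over $j=1,\ldots,p$. Differentiating gives
\[
\partial_{\beta_j}L_n(\beta^*) = -\frac{1}{n}\sum_{i=1}^n V_{ij}, \qquad V_{ij} := \phi_\nu(\xi_i w(X_i))\,w(X_i)\,X_{ij},
\]
which I would split as $-(T_j + B_j)$ with $T_j := \frac{1}{n}\sum_i(V_{ij}-\mE V_{ij})$ and $B_j := \mE V_{ij}$.

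For the bias $B_j$, I would use the independence of $\xi_i$ and $X_i$ from Assumption (A3) together with $\mE \xi_i = 0$ to rewrite $B_j = \mE[w(X_i) X_{ij}\cdot\mE\{\phi_\nu(\xi_i w(X_i))-\xi_i w(X_i)\mid X_i\}]$. The elementary inequality $|\phi_\nu(y)-y|\leq y^2/\nu$, combined with $w(X_i)\leq 1$ and $|w(X_i)X_{ij}|\leq b_0$, then yields $|B_j|\leq b_0\,\mathrm{Var}(\xi_i)/\nu$. Since $\mathrm{Var}(\xi_i)=\sigma_\eta^2\sum_{k\geq 0}b_k^2\lesssim \sigma_\eta^2$ by (A2) and $\nu\asymp \sigma_\eta\sqrt{n/\log p}$, we obtain $\max_j|B_j|\lesssim b_0\sigma_\eta\sqrt{(\log p)/n}$, which is at most $\lambda/2$ for a suitable absolute constant.

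For the fluctuation $T_j$, I would view $V_{ij}$ as a bounded transformation (with $|V_{ij}|\leq \nu b_0$) of the joint $(p{+}1)$-dimensional stationary linear process $Z_i=(X_i^\top,\xi_i)^\top$, whose innovations $(\boldsymbol\varepsilon_i^\top,\eta_i)^\top$ are i.i.d.\ under (A3) and whose block-diagonal coefficient matrices have spectral norms $\max(\|A_k\|,|b_k|)$, so condition (\ref{gmc revise}) is inherited with parameters of the same order as $\gamma$ and $\tau$. Applying Theorem~\ref{thmbern} to this joint process yields a Bernstein-type tail
\[
\mP(|T_j|\geq t)\leq 2\exp\!\Big(-\frac{nt^2}{C_1\sigma_\eta^2 b_0^2\gamma^2\tau^2 + C_2\nu b_0\tau t}\Big).
\]
Plugging in $t\asymp b_0\sigma_\eta\sqrt{(\log p)/n}$ together with $\nu \asymp \sigma_\eta\sqrt{n/\log p}$ balances the two denominator terms and renders the tail $\leq 2\exp(-c\log p)$ for $C$ large enough in the definition of $\lambda$. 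A union bound over $j$, combined with the bias estimate, then gives $|\nabla L_n(\beta^*)|_\infty\leq \lambda$ on an event of probability at least $1-4p^{-c_1}$.

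The principal obstacle is verifying the Lipschitz condition \eqref{lips} required to apply Theorem~\ref{thmbern} to $V_{ij}$. Because $w(x)=\min\{1,b/|Bx|_2\}$ is only piecewise smooth and $V_{ij}$ has the product form $\phi_\nu(\xi w(X))\,w(X)\,X_j$, a naive product-rule expansion yields a Lipschitz constant depending on $|\xi|$ and $|X_j|$, which are not uniformly bounded. A natural fix is to restrict to a high-probability event $\{|\xi_i|\leq M_\xi,\,|X_{ij}|\leq M_X\}$, on which $V_{ij}$ is uniformly Lipschitz with an effective constant controlled by $b_0$ and the truncation levels; apply the Bernstein inequality there; and absorb the negligible contribution of the complementary event into the probability budget via a moment bound using (A1)-(A2).
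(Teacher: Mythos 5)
Your decomposition into a centered fluctuation term and a bias term matches the paper, and your bias estimate (using $\mE\xi_i=0$, $|\phi_\nu(y)-y|\le y^2/\nu$, and $\mathrm{Var}(\xi_i)\lesssim\sigma_\eta^2$) is essentially the paper's bound \eqref{eq:s3.1.1}. The gap is in the fluctuation term: the Bernstein tail you display, with variance proxy $\sigma_\eta^2 b_0^2\gamma^2\tau^2$ and scale $\nu b_0\tau$, is asserted rather than derived, and the route you propose does not produce it. Treating $V_{ij}$ as a function of the joint $(p{+}1)$-dimensional process requires verifying the weighted-$\ell_1$ Lipschitz condition \eqref{lips} with weights summing to one; in the $X$-directions the natural Lipschitz control of $\phi_\nu(\xi w(x))w(x)x_j$ is Euclidean with constant of order $\nu(\kappa(B)+1)$ (even after truncating $\xi$, since $|\xi|\le\nu/w(x)$ on the active branch of $\phi_\nu$), so either you convert $|x-y|_2$ to $\sum_i g_i|x_i-y_i|$ at a cost that scales with $p$, or you invoke the Euclidean variant (Corollary \ref{corbern2}), whose variance proxy contains $M^2(\log p)^2$ with $M=\nu b_0$; with $\nu\asymp\sigma_\eta\sqrt{n/\log p}$ that term alone forces $t\gtrsim b_0\sigma_\eta\tau\log p$, which does not even vanish, let alone match $\lambda\asymp b_0\sigma_\eta\sqrt{\log p/n}$. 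Your proposed high-probability truncation also changes the centering ($\mE\tilde V\ne\mE V$), and "absorbing" the complement is left unargued, so the key step of the lemma remains open.

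The paper avoids all of this by exploiting strict exogeneity (A3): it conditions on the covariate path $(X_i)_i$, so the only remaining randomness is the univariate linear error process $(\xi_i)$, which satisfies condition \eqref{gmc revise} by (A2). Conditionally, $\phi_\nu(\xi_i w(X_i))w(X_i)X_{ij}$ is a function of $\xi_i$ alone, Lipschitz with constant at most $b_0$ (since $w(X_i)\le1$ and $|w(X_i)X_{ij}|\le b_0$) and uniformly bounded by $\nu b_0$, so Theorem \ref{thmbern} applies directly with deviation $x\asymp b_0(\sigma_\eta\sqrt{\log p/n}+\nu\log p/n)\asymp b_0\sigma_\eta\sqrt{\log p/n}$; no Lipschitz control in the $x$-variable is ever needed. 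If you want to salvage your argument, this conditioning step is the missing idea.
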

\begin{proof}
    Consider the first component $\nabla L_{n1}(\beta^*)$ of $\nabla L_n(\beta^*)$. We have 
    $$\nabla L_{n1}(\beta^*)=\frac1n\sum_{i=1}^n\phi_\nu(\xi_iw(X_i))X_{i1}w(X_i).$$
    Note that $|\phi_\nu(x) - \phi_\nu(y)| \leq |x-y|$ and $|\phi_\nu(\xi_iw(X_i))X_{i1}w(X_{i})|\leq \nu b_0$.
    Conditioned on $\{X_i\}_{i=1}^n$, 
    by Theorem 2.1, we have 
    $$\mP\big(|\nabla L_{n1}(\beta^*) - \mE[\nabla L_{n1}(\beta^*)] |\geq C' b_0 x \ |(X_i)_i\big)\leq4p^{-c},$$
    for $x = \sigma_{\eta}\sqrt{\log p/n} + \nu \log p/n$ and some constant $c>1$. Hence by a union bound, with probability at least $1-4 p^{-c_1}$ for $c_1 >0$, it holds that
\begin{equation*}
|\nabla L_{n}(\beta^*) - \mE[\nabla L_{n}(\beta^*)] |_\infty \leq C' b_0 x.
\end{equation*}
As $\mE|\phi_\nu(\xi_iw(X_i))|= \mE [|\xi_iw(X_i)|\mathbf{1}(|\xi_iw(X_i)|>\nu)] \leq C_{\rho}\sigma_{\eta}^2 \nu^{-1}$,
we have
\begin{align}\label{eq:s3.1.1}
       | \mE[\nabla L_{n1}(\beta^*)]| \leq   \mE|\nabla L_{n1}(\beta^*)| \leq C_{\rho} b_0\sigma_{\eta}^2 \nu^{-1}.
\end{align}
Therefore, choosing $\nu = c \sigma_{\eta}(n/\log p)^{1/2}$ and $\lambda = C b_0 \sigma_{\eta} \sqrt{\log p/n}$ ensures that $|\nabla L_n(\beta^*)|_\infty\leq \lambda$ with high probability.

\end{proof}

\begin{lem}[RSC condition]
\label{lem: 3.1.2}
Let Assumptions (A1) (A2) (A3) be satisfied. Assume $$b_0(b_0+\kappa(B)\gamma\sigma_{\varepsilon})\tau\sqrt s\sqrt{(\log p)^3/n}\to0.$$ We have the following holds uniformly for all $\beta$, such that $|\Delta|_2\leq\nu/(2b_0)$ and $|\Delta_{S^c}|_1\leq3|\Delta_S|_1$ with probability no less than $1-4p^{-c_2}$ that
\begin{align}\label{eq:s3.1.2}
    L_n(\beta)-L_n(\beta^*)-\nabla L_n(\beta^*)^\top (\beta-\beta^*) \geq \frac12 \lambda_{\min}(\mE[\frac{w^2(X_i)}2X_iX_i^\top]) |\beta-\beta^*|_2^2.
\end{align}
\end{lem}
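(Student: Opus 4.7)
The plan is to establish the RSC inequality by (i) bounding the Huber-loss Bregman divergence below by a truncated weighted quadratic form, (ii) removing the truncation via a tail-fraction estimate on the error process, and (iii) transferring the resulting quadratic form from $\hat\Sigma_w:=n^{-1}\sum_i w^2(X_i)X_iX_i^\top$ to $\Sigma_w:=\mE[w^2(X_i)X_iX_i^\top]$ by an entrywise concentration, closed out by the cone property $|\Delta|_1\le 4\sqrt{s}|\Delta|_2$.

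For (i), since $\Phi_\nu''=\mathbf{1}(|\cdot|\le\nu)$ a.e., Taylor's theorem with integral remainder gives $\Phi_\nu(a+b)-\Phi_\nu(a)-\phi_\nu(a)b\ge \tfrac{b^2}{2}\mathbf{1}(|a|\le\nu/2)\mathbf{1}(|b|\le\nu/2)$. Taking $a_i=\xi_iw(X_i)$ and $b_i=-(X_i^\top\Delta)w(X_i)$, the ball constraint $|\Delta|_2\le\nu/(2b_0)$ together with $|w(X_i)X_i|_2\le b_0$ forces $|b_i|\le\nu/2$ deterministically, while $w(X_i)\le 1$ gives $\mathbf{1}(|\xi_i|\le\nu/2)\le\mathbf{1}(|a_i|\le\nu/2)$. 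Summation and the bound $w^2(X_i)(X_i^\top\Delta)^2\le b_0^2|\Delta|_2^2$ on the complementary event yield
$$L_n(\beta)-L_n(\beta^*)-\nabla L_n(\beta^*)^\top\Delta\ \ge\ \tfrac{1}{2}\Delta^\top\hat\Sigma_w\Delta\ -\ \tfrac{b_0^2}{2}|\Delta|_2^2\cdot n^{-1}\textstyle\sum_i\mathbf{1}(|\xi_i|>\nu/2).$$

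For (ii), dominate $\mathbf{1}(|t|>\nu/2)$ by the $(2/\nu)$-Lipschitz, bounded envelope $\psi(t)=\min(1,2|t|/\nu)$. Since $\{\xi_i\}$ is a scalar linear process satisfying (\ref{gmc revise}) with $\tau,\gamma=O(1)$ by (A2), Theorem \ref{thmbern} applied to $\psi(\xi_i)$, combined with $\mE\psi(\xi_i)\lesssim\sigma_\eta/\nu\asymp\sqrt{\log p/n}$ for the choice $\nu\asymp\sigma_\eta\sqrt{n/\log p}$, gives $n^{-1}\sum_i\mathbf{1}(|\xi_i|>\nu/2)=O_\mP(\sqrt{\log p/n})$; the correction term is thus $O_\mP(b_0^2\sqrt{\log p/n})|\Delta|_2^2$, negligible under (\ref{scaling condition}). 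For (iii), apply Corollary \ref{corbern2} entrywise to $G_{jk}(x)=w^2(x)x_jx_k$. Boundedness $|G_{jk}(x)|\le b_0^2$ is immediate, and the key Lipschitz fact is that $h(x):=w(x)x$ is $\kappa(B)$-Lipschitz in Euclidean norm. This follows because $Bh(x)$ is exactly the metric projection of $Bx$ onto the Euclidean ball of radius $b$ (hence nonexpansive), so $|Bh(x)-Bh(y)|_2\le|Bx-By|_2$ and $|h(x)-h(y)|_2\le\|B^{-1}\||Bx-By|_2\le\kappa(B)|x-y|_2$. The product rule then gives $G_{jk}$ Lipschitz constant $\le 2b_0\kappa(B)$, and Corollary \ref{corbern2} with a union bound over $p^2$ entries yields
$$\|\hat\Sigma_w-\Sigma_w\|_{\max}\ =\ O_\mP\!\Bigl(b_0(b_0+\kappa(B)\gamma\sigma_\varepsilon)\tau\sqrt{(\log p)^3/n}\Bigr).$$

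Combining with the cone bound $|\Delta|_1\le 4\sqrt s|\Delta|_2$ and the elementary estimate $|\Delta^\top(\hat\Sigma_w-\Sigma_w)\Delta|\le\|\hat\Sigma_w-\Sigma_w\|_{\max}|\Delta|_1^2$ gives a perturbation of order $s\|\hat\Sigma_w-\Sigma_w\|_{\max}|\Delta|_2^2=o(|\Delta|_2^2)$ uniformly in admissible $\Delta$ under the scaling hypothesis, so $\Delta^\top\hat\Sigma_w\Delta\ge(1-o(1))\lambda_{\min}(\Sigma_w)|\Delta|_2^2$. Halving leaves a margin that absorbs the tail correction from (ii), producing the stated lower bound $\tfrac12\lambda_{\min}(\mE[\tfrac{w^2(X_i)}{2}X_iX_i^\top])|\Delta|_2^2$ for $n$ large. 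The hardest part is step (iii): identifying the sharp Euclidean-Lipschitz constant $\kappa(B)$ (not $\kappa(B)^2$) for $w(x)x$ through the projection characterization—without this one picks up an extra factor and needs a strictly stronger scaling condition—and recognizing that $G_{jk}$ admits no dimension-free weighted-$\ell_1$ Lipschitz bound, so Theorem \ref{thmbern} is inapplicable and Corollary \ref{corbern2} must be used, which is precisely the source of the $(\log p)^3$ inside the square root in (\ref{scaling condition}).
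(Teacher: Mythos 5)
Your steps (i) and (ii) are fine: the Bregman lower bound with the indicator $\mathbf{1}(|\xi_i|\le\nu/2)$ is exactly the starting point the paper takes (via Loh's argument), your projection characterization of $x\mapsto w(x)x$ and the resulting $\kappa(B)$-Lipschitz bound is correct, and controlling the empirical tail fraction $n^{-1}\sum_i\mathbf{1}(|\xi_i|>\nu/2)$ through the Lipschitz envelope and Theorem \ref{thmbern} is a legitimate (and slightly different) alternative to the paper's treatment, which instead keeps the indicators inside the quadratic form, conditions on $(\xi_i)_i$, and handles $\mP(|\xi_i|\le\nu/2)$ in the expectation via exogeneity and Chebyshev.

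The genuine gap is in step (iii). Transferring from $\hat\Sigma_w$ to $\Sigma_w$ through entrywise concentration in $\|\cdot\|_{\max}$ and the cone bound $|\Delta|_1\le 4\sqrt{s}|\Delta|_2$ costs a full factor of $s$: your perturbation is of order $s\,\|\hat\Sigma_w-\Sigma_w\|_{\max}|\Delta|_2^2 \asymp s\, b_0(b_0+\kappa(B)\gamma\sigma_\varepsilon)\tau\sqrt{(\log p)^3/n}\,|\Delta|_2^2$, whereas the lemma's hypothesis only forces $\sqrt{s}\,b_0(b_0+\kappa(B)\gamma\sigma_\varepsilon)\tau\sqrt{(\log p)^3/n}\to 0$. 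These differ by $\sqrt{s}$, so your assertion that the perturbation is $o(|\Delta|_2^2)$ "under the scaling hypothesis" is unjustified (take $\sqrt{s}$ times the assumed rate of order $1/\log n$ with $s\to\infty$ and the product diverges); as written your argument proves the lemma only under the strictly stronger condition with $s$ in place of $\sqrt{s}$. The paper avoids this loss by concentrating the quadratic forms $u^\top\Gamma u$ directly over an $\varepsilon$-net of $2s$-sparse unit vectors, demanding per-vector failure probability $\exp\{-c\,s\log p\}$ (this is where the $\sqrt{s}$ in the deviation $t$ comes from, not from an entrywise bound), and then extending to the cone via Lemma 12 of Loh and Wainwright, which yields $|v^\top(\Gamma-\mE\Gamma)v|\le 27t(|v|_2^2+|v|_1^2/s)$ and hence a perturbation of order $t\,|\Delta|_2^2$ only. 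So your diagnosis of where the $(\log p)^{3/2}$ comes from (the Euclidean-Lipschitz Corollary \ref{corbern2}) is right, but the $\sqrt{s}$ in (\ref{scaling condition}) originates in the sparse-net union bound, and replacing that device by a max-norm bound is exactly what breaks the stated rate. A further, more minor point shared with the paper: absorbing the $o(1)$ remainders into $\frac12\lambda_{\min}(\mE[\frac{w^2(X_i)}{2}X_iX_i^\top])$ implicitly treats this eigenvalue as bounded away from zero.
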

\begin{proof}
    Denote $S = \text{supp}(\beta^*)$. We will show that with high probability, \eqref{eq:s3.1.2} holds uniformly over the set 
    $$\mathcal{C}:=\{\beta:|\beta-\beta^*|\leq\frac\nu{2b_0}, |\beta_{S^c}-\beta^*_{S^c}|_1\leq3|\beta_S-\beta^*_S|_1\},$$
    Let $\mathcal{T}(\beta,\beta^*)=L_n(\beta)-L_n(\beta^*)-\nabla L_n(\beta^*)^\top (\beta-\beta^*)$, then it follows the same argument as Appendix B.3 in \cite{loh2021scale} that 
    $$\mathcal{T}(\beta,\beta^*)\geq\frac1n\sum_{i=1}^n\frac12(w(X_i)X_i^\top(\beta-\beta^*))^2\mathbf{1}_{A_i},$$
    where $A_i=\{\xi_i\leq \nu/2\}$. Denote $\Gamma=\frac1n\sum_{i=1}^n\frac{w(X_i)^2}2X_iX_i^\top\mathbf{1}_{A_i}$. For any $u$ such that $|u|_2\leq 1$, we have 
    $$u^\top\Gamma u=\frac1{n}\sum_{i=1}^n\frac12(u^\top X_iw(X_i))^2\mathbf{1}_{A_i}.$$ 
    Notice that $\frac12|(u^\top xw(x))^2-(u^\top yw(y))^2|\leq b_0(\kappa(B)+1)|x-y|_2$ and $|(u^\top x w(x))^2|\leq b_0^2$.
    Conditioned on $\xi_i$, by Corollary \ref{corbern2} we have 
    $$\mP(|u^\top\Gamma u-\mE[u^\top\Gamma u]|\geq t |(\xi_i)_i)\leq 4\exp\{-c_3s\log p\},$$
    where $t = C b_0 (b_0+\kappa(B)\gamma\sigma_{\varepsilon})\tau\sqrt s\sqrt{(\log p)^3/n}$ for a sufficiently large $C$ such that $c_3 > 4$. Note that $t\to0$ by assumption. 
    Following the same spirit of the $\varepsilon$-net argument in lemma 15 of \cite{loh2012high}, we can obtain that 
    $$\big|v^\top\big(\Gamma-\mE\Gamma\big)v\big|\leq t,\ \forall\, v\in\mb{R}^{p},\ |v|_0\leq2s, \ |v|_2\leq1,$$
    holds with probability at least $$1-4\exp\big\{2s\log9+2s\log p-c_3{s\log p}\big\}\geq1-4p^{-c_2},$$
    provided that $p\to \infty$ and a sufficiently large $c_3$. By Lemma 12 in \cite{loh2012high}, it further implies that 
    \begin{equation}|v^\top(\Gamma-\mE\Gamma)v|\leq 27t\bigg(|v|_2^2+\frac{|v|_1^2}{s}\bigg), \ \forall v\in\mb{R}^{p}.
\end{equation}
Denote $\Delta=\beta-\beta^*$, then we have 
\begin{equation}\label{eq:s3.1.3}
    \mathcal{T}(\beta,\beta^*) \geq \Delta^\top\Gamma\Delta \geq \mE[\Delta^\top\Gamma\Delta] - 27t(|\Delta|_2^2+\frac{|\Delta|_1^2}s).
\end{equation}
Moreover, as $\mE|\xi_i|^2 \leq C_{\rho}\sigma_{\eta}^2$ and $\nu \to \infty$,
\begin{align*}
    \mE[\Delta^\top\Gamma\Delta] &= \mE[\frac{w^2(X_i)}2(\Delta^\top X_i)^2] \cdot \mP(|\xi_i|\leq\frac\nu2)\\
    &\geq \lambda_{\min}(\mE[\frac{w^2(X_i)}2X_iX_i^\top]) |\Delta|_2^2 \cdot \Big(1-\frac{4\mE|\xi_i|^2}{\nu^2}\Big)\\
    &\geq \frac34\lambda_{\min}(\mE[\frac{w^2(X_i)}2X_iX_i^\top])|\Delta|_2^2,
\end{align*}
Also, for $\beta\in \mathcal{C}$, $|\Delta|_2^2+\frac{|\Delta|_1^2}s\leq17|\Delta|_2^2$. By \eqref{eq:s3.1.3}, we conclude that
\begin{align*}
    \mathcal{T}(\beta,\beta^*) &\geq \Big(\frac34\lambda_{\min}(\mE[\frac{w^2(X_i)}2X_iX_i^\top]) - 459t\Big)|\Delta|_2^2\\
    &\geq\frac12\lambda_{\min}(\mE[\frac{w^2(X_i)}2X_iX_i^\top])|\Delta|_2^2
\end{align*}
\end{proof}

\begin{proof}[Proof of Theorem 3.1]

With Lemma \ref{lem: 3.1.1} and Lemma \ref{lem: 3.1.2}, the proof follows the same spirit as Appendix B.1 of \cite{loh2021scale} without extra technical difficulty.
\end{proof}

\subsection{Proofs of Results in Section 3.2}
We shall first prove Proposition 3.2.
\begin{proof}[Proof of Proposition 3.2]
If $\lambda_{\max}(A) <1$, for any $\epsilon >0$, the matrix $B=A/[\lambda_{\max}(A)+\epsilon]$ has spectral radius strictly less than 1. By Theorem 5.6.12 of \cite{golub2013matrix}, $B$ is convergent in the sense that $\lim\limits_{k \rightarrow \infty} B^k=0$. Thus, $\|B^k\|\rightarrow 0$ as $k\rightarrow \infty$ and there exists some $N=N(\varepsilon, A)$ such that $\|B^k\|<1$ for all $k \geq N$, which implies $\|A^k\| \leq [\lambda_{\max}(A)+\epsilon]^k$ for all $k \geq N$. Therefore, given the constant $0<\rho_0<1$ and with an arbitrarily small $\epsilon$ with $\lambda_{\max}(A)+\epsilon<1$, there must exist some finite $k$ such that $\|A^k\| \leq \rho_0$.
The proof of the converse is easier by the fact that $[\lambda_{\max}(A)]^k = \lambda_{\max}(A^k) \leq \|A^k\| \text{ for any }k$.
\end{proof}
To prove Theorem 3.3, we introduce some preparatory lemmas. Define $\widetilde L_j(\boldsymbol b)=n^{-1}\sum_{i=1}^n(\widetilde{X}_{ij}-{\boldsymbol b}^\top\widetilde{X}_{i-1})^2$ for $1\leq j\leq p$.
\begin{lem}\label{lem1}
	Let Assumption (B1) be satisfied. For $\nu\asymp \mu_q(n/\log p)^{1/2(q-1)}$ and 
	$\lambda\asymp \tau\gamma\mu_q(\|A\|_{\infty}+1)[(\log p)/n]^{1/2-1/2(q-1)}$, with probability at least $1-4p^{-c_1}$ for some $c_1 >0$, it holds that
	\begin{equation}\label{l4.1}\big|\widetilde L_j(\boldsymbol a_{j\cdot})\big|_\infty\leq\lambda,\,\text{for all }1\leq j\leq p.
	\end{equation}
\end{lem}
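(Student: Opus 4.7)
Since $\widetilde L_j$ is quadratic in its argument, the bound \eqref{l4.1} really concerns the $\ell_\infty$-norm of the empirical score
$$\nabla \widetilde L_j(\boldsymbol a_{j\cdot})=-\frac{2}{n}\sum_{i=1}^n\bigl(\widetilde X_{ij}-\boldsymbol a_{j\cdot}^\top \widetilde X_{i-1}\bigr)\widetilde X_{i-1}\in\mathbb{R}^p.$$
The plan is to control each coordinate $[\nabla \widetilde L_j(\boldsymbol a_{j\cdot})]_k$ by applying Corollary \ref{corbern} to the stacked linear process $(X_i^\top,X_{i-1}^\top)^\top$, and then to union-bound over the $p^2$ pairs $(j,k)$. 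Using the VAR(1) identity $X_{ij}-\boldsymbol a_{j\cdot}^\top X_{i-1}=\varepsilon_{ij}$, I would introduce $G_{jk}(X_i,X_{i-1})=(\widetilde X_{ij}-\boldsymbol a_{j\cdot}^\top \widetilde X_{i-1})\widetilde X_{(i-1)k}$ and split $\tfrac{2}{n}\sum_i G_{jk}$ into the centred fluctuation $\tfrac{2}{n}\sum_i(G_{jk}-\mathbb{E} G_{jk})$ plus the deterministic bias $2\mathbb{E} G_{jk}$.

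For the stochastic fluctuation, $G_{jk}$ is uniformly bounded by $M_{jk}\le (1+\|A\|_\infty)\nu^2$ because $|\widetilde X|\le \nu$ coordinatewise, and is Lipschitz in $(u,v)\in\mathbb{R}^{2p}$ with weights proportional to $\nu$ on coordinates $u_j,v_k$ and to $\nu|a_{jl}|$ on the remaining $v_l$; after normalising by the total weight $\lesssim (1+\|A\|_\infty)\nu$ the hypothesis of Corollary \ref{corbern} is met. Applying that inequality to the stacked linear process, which still obeys \eqref{gmc revise} with the same $\tau,\gamma$ up to a constant, and union-bounding over the $p^2$ index pairs gives a deviation of order
$$\tau\gamma(1+\|A\|_\infty)\nu\sqrt{(\log p)/n}+\tau(1+\|A\|_\infty)\nu^2(\log p)/n$$
with probability at least $1-2p^{-c_1}$, where the Gaussian term uses the variance proxy furnished by $\sigma^2=\mathrm{Var}(\varepsilon_{ij})=1$ in Corollary \ref{corbern}.

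The bias is bounded by Hölder plus Markov: since $|\widetilde X-X|\le |X|\mathbf{1}_{\{|X|>\nu\}}$, Hölder with exponents $(q,q,q/(q-2))$ and $\|X_{ij}\|_q\le \mu_q$ give $\mathbb{E}[|X_{ij}|\mathbf{1}_{\{|X_{ij}|>\nu\}}|X_{(i-1)k}|]\le \mu_q^q/\nu^{q-2}$, and the $\boldsymbol a_{j\cdot}^\top$ piece adds a factor of $\|A\|_\infty$, so that $|\mathbb{E} G_{jk}|\lesssim(1+\|A\|_\infty)\mu_q^q/\nu^{q-2}$. Plugging in $\nu\asymp\mu_q(n/\log p)^{1/(2(q-1))}$ one checks that $\nu\sqrt{(\log p)/n}\asymp\mu_q(\log p/n)^{(q-2)/(2(q-1))}$ and $\mu_q^q/\nu^{q-2}\asymp\mu_q^2(\log p/n)^{(q-2)/(2(q-1))}$, while the sub-exponential Bernstein contribution $\nu^2(\log p)/n\asymp\mu_q^2(\log p/n)^{(q-2)/(q-1)}$ is of strictly smaller order. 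Collecting all three contributions and absorbing the (constant-level) $\mu_q$ factors shows that the total bound is at most $\tau\gamma(1+\|A\|_\infty)\mu_q(\log p/n)^{(q-2)/(2(q-1))}\asymp \lambda$, which proves \eqref{l4.1}.

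The main technical obstacle is the bookkeeping for the weighted Lipschitz form required by Corollary \ref{corbern}: one must exhibit non-negative weights summing to $1$ on the $2p$ coordinates of $(u,v)$ while keeping $M_{jk}$ tight, and the normalisation naturally produces the factor $(1+\|A\|_\infty)$, which then propagates into both $M_{jk}$ and the prefactor of $\lambda$. Once that accounting is done, the calibration of $\nu$ so that the bias and the sub-exponential Bernstein term both collapse to the same rate, and the $p^2$-union bound, are routine.
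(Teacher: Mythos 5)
Your proposal matches the paper's own proof in all essentials: you correctly read \eqref{l4.1} as a bound on $|\nabla \widetilde L_j(\boldsymbol a_{j\cdot})|_\infty$, split each coordinate into a centred fluctuation plus a truncation bias, control the fluctuation via Corollary \ref{corbern} applied to the stacked process $(X_i^\top,X_{i-1}^\top)^\top$ with the bound $M\lesssim(\|A\|_\infty+1)\nu^2$ and Lipschitz weight $\lesssim(\|A\|_\infty+1)\nu$, bound the bias by H\"older plus the tail bound $\mu_q^q\nu^{2-q}$, and finish with a union bound over the $p^2$ coordinates and the same calibration of $\nu$. The only (harmless) difference is that you explicitly check the sub-exponential Bernstein term is of lower order, which the paper leaves implicit in its choice $x=c'\gamma\tau\sqrt{(\log p)/n}$.
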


\begin{proof}[Proof of Lemma \ref{lem1}]
We consider the first component of $\nabla\widetilde L_j(\boldsymbol a_{j\cdot})$, denoted by $\nabla\widetilde L_{j1}(\boldsymbol a_{j\cdot})$. Other components can be manipulated analogously. Let $G(X_i,X_{i-1})=2(\widetilde X_{i1}-\widetilde X_{i-1}^\top \boldsymbol a_{j\cdot})\widetilde X_{(i-1)1}$, where $\tilde{X}_{(i-1)1}$ is the first element of $\tilde{X}_{(i-1)}$.
Then we can write
$$\nabla\widetilde L_{j1}(\boldsymbol a_{j\cdot})=\frac1n\sum_{i=1}^nG(X_i,X_{i-1}).$$
Notice that $|G|\leq2(\|A\|_\infty+1)\nu^2$ and $|G(u)-G(v)|\leq g^\top|u-v|$, where $|g|_1\leq 4(\|A\|_\infty+1)\nu$. 
By Corollary \ref{corbern}, for $x=c'\gamma\tau\sqrt{(\log p)/n}$, we have
\begin{align}\label{l4.2}\mP\bigg(\Big|\nabla\widetilde L_{j1}(\boldsymbol a_{j\cdot})-\mE\big[\nabla\widetilde L_{j1}(\boldsymbol a_{j\cdot})\big]\Big|\geq 4\nu(\|A\|_\infty +1)x\bigg)&\leq 4\exp\Big\{-\frac{(c')^2\log p}{2C_1}\Big\}.\end{align}
In view of $\mE[\nabla L_n(\boldsymbol a_{j\cdot})]=0$, the triangle inequality and $|\widetilde{X}_{ij}| \leq |X_{ij}|$, 
\begin{eqnarray} \label{eq: E}
\hspace{-3 mm}\big|\mE\big[\nabla\widetilde L_{j1}(\boldsymbol a_{j\cdot})\big]\big|&=&\big|\mE\big[\nabla\widetilde L_{j1}(\boldsymbol a_{j\cdot})\big]-\mE\big[\nabla L_{j1}(\boldsymbol a_{j\cdot})\big]\big|\cr
	&=& 2\mE\big[\big|(\widetilde X_{ij}-\boldsymbol a_{j\cdot}^\top\widetilde X_{i-1})\widetilde X_{(i-1)1}-(X_{ij}-\boldsymbol a_{j\cdot}^\top X_{i-1})X_{(i-1)1}\big|\big]\cr
	&\lesssim& \mE\big[\big|{X}_{(i-1)1}(\widetilde{X}_{ij}- X_{ij})\big|\big]+ \mE\big[\big|{X}_{ij}(X_{(i-1)1}-\widetilde X_{(i-1)1})\big|\big] \cr
	&&+ |\boldsymbol{a}_{j\cdot}|^\top \mE\big[\big|{X}_{(i-1)1}(\widetilde X_{i-1}-X_{i-1})\big|\big]\cr
	&& + |\boldsymbol{a}_{j\cdot}|^\top \mE\big[\big|{X}_{i-1}(\widetilde X_{(i-1)1}-X_{(i-1)1})\big|\big].
\end{eqnarray}
Since $|\widetilde{X}_{ij}-X_{ij}| \leq |X_{ij}|{\bf 1}\{|X_{ij}|\geq \nu\}$, by H\"{o}lder's inequality, we have
\begin{eqnarray*}
\mE\big[\big|{X}_{(i-1)1}(X_{ij}-\widetilde X_{ij})\big|\big] &\leq & \|\tilde{X}_{(i-1)1}\|_q\cdot\|\tilde{X}_{ij}-X_{ij}\|_{q/(q-1)}\cr
&\leq &\mu_q \|\tilde{X}_{ij}-X_{ij}\|_{q/(q-1)},
\end{eqnarray*}
where
\begin{equation*}
\|\tilde{X}_{ij}-X_{ij}\|_{q/(q-1)}^{q/(q-1)} \leq \mE|X_{ij}|^{q/(q-1)} {\bf 1}\{|X_{ij}|\geq \nu\} \leq \mu_q^{q} \nu^{-q(q-2)/(q-1)}.
\end{equation*}
It then follows that $\mE\big[\big|{X}_{(i-1)1}(X_{ij}-\widetilde X_{ij})\big|\big] \leq \mu_q^q \nu^{2-q}.$
Other terms in (\ref{eq: E}) can be dealt with similarly. With the choice of $\nu$, we can get
$
\big|\mE\big[\nabla\widetilde L_{j1}(\boldsymbol a_{j\cdot})\big]\big| 
\leq c\nu(\|A\|_\infty +1)x.$
Letting $\lambda=C\nu(\|A\|_\infty+1)x$ for a sufficiently large $C$ and $c'>2\sqrt{C_1}$, it follows from \eqref{l4.2} that
\begin{align*}\mP\bigg(\Big|\nabla\widetilde L_{j1}(\boldsymbol a_{j\cdot})\Big|\geq \lambda\bigg)&\leq 4\exp\Big\{-\frac{(c')^2\log p}{2C_1}\Big\}.\end{align*}
By the Bonferroni inequality, we have
\begin{equation*}\label{l1.1}\mP\bigg(\Big|\nabla\widetilde L_{j}(\boldsymbol a_{j\cdot})\Big|_\infty\geq \lambda,\,\text{for all }1\leq j\leq p\bigg)\leq 4p^{-c_1}
\end{equation*}
where $c_1=2^{-1}C_1^{-1}{(c')^2}-2>0.$
\end{proof}

Define a cone $C(S)=\{\Delta\in\mb{R}^{p}:|\Delta_{S^c}|_1\leq3|\Delta_S|_1\}$ for a subset $S\subseteq\{1,2,\dots,p\}$.
We shall verify a restricted eigenvalue (RE) condition on the set $C(S)$ in the lemma below.
\begin{lem}\label{lem2}
Let Assumptions (B1) and (B2) be satisfied. Choose $\nu\asymp \mu_q(n/\log p)^{1/(2q-2)}$. Then for all $\Delta\in C(S)$,
	\begin{align}\label{eq: lem2}\Delta^\top \nabla^2 \widetilde L_j(\boldsymbol a_{j\cdot})\Delta\geq \frac{1}2|\Delta|_2^2\end{align}	
holds with probability at least $1-4p^{-c_2}$ for some constant $c_2>0$.
\end{lem}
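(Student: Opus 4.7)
Because $\nabla^2 \widetilde L_j(\boldsymbol a_{j\cdot}) = (2/n)\sum_{i=1}^n \widetilde X_{i-1}\widetilde X_{i-1}^\top$ is the same for every $j$, the task reduces to a single restricted eigenvalue inequality: $\Delta^\top \widehat\Gamma\Delta \geq |\Delta|_2^2/4$ for all $\Delta\in C(S)$ with $|S|\leq s$, where $\widehat\Gamma := n^{-1}\sum_i \widetilde X_{i-1}\widetilde X_{i-1}^\top$. My plan is the standard population-versus-sample split $\widehat\Gamma - \Sigma_0 = (\widehat\Gamma-\Gamma)+(\Gamma-\Sigma_0)$ with $\Gamma = \mE\widehat\Gamma$ and $\Sigma_0 = \mE[X_{i-1}X_{i-1}^\top]$: control each piece in the max-norm, and then transfer that control to the quadratic form by means of the cone bound $|\Delta|_1\leq 4\sqrt s\,|\Delta|_2$.

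The population part is easy. Because $(X_i)$ is the stationary VAR(1) solution with $\text{Var}(\boldsymbol{\varepsilon}_i)=I_p$, the Yule--Walker identity $\Sigma_0 = A\Sigma_0 A^\top + I_p$ yields $\Sigma_0\succeq I_p$ and hence $\Delta^\top\Sigma_0\Delta\geq |\Delta|_2^2$. For the truncation bias I bound each entry via $|\phi_\nu(a)\phi_\nu(b)-ab|\leq \nu(|a|-\nu)_+ + |a|(|b|-\nu)_+$ combined with the Markov estimate $\mE[|X_{ij}|\mathbf 1\{|X_{ij}|>\nu\}]\leq \mu_q^q/\nu^{q-1}$ from (B1); at the prescribed $\nu\asymp \mu_q(n/\log p)^{1/(2q-2)}$ this gives $\|\Gamma-\Sigma_0\|_{\max}\lesssim \mu_q^2[(\log p)/n]^{(q-2)/(2q-2)}$. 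For the sample fluctuation I apply Theorem 2.1 to $G_{jk}(X_{i-1})=\phi_\nu(X_{(i-1)j})\phi_\nu(X_{(i-1)k})$, which is uniformly bounded by $\nu^2$ and satisfies \eqref{lips} with multiplier $2\nu$ (take $g_j=g_k=1/2$). A union bound over the $p^2$ entries yields $\|\widehat\Gamma-\Gamma\|_{\max}\lesssim \mu_q\gamma\tau[(\log p)/n]^{(q-2)/(2q-2)}$ with probability at least $1-4p^{-c_2}$.

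Putting the pieces together, any symmetric $M$ contributes at most $\|M\|_{\max}|\Delta|_1^2\leq 16s\|M\|_{\max}|\Delta|_2^2$ on the cone, so
\[
 \Delta^\top \widehat\Gamma\Delta \ \geq \ |\Delta|_2^2 \ - \ 16s\bigl(\|\Gamma-\Sigma_0\|_{\max}+\|\widehat\Gamma-\Gamma\|_{\max}\bigr)|\Delta|_2^2.
\]
Assumption (B2), $\mu_q\gamma\tau s^2[(\log p)/n]^{(q-2)/(2q-2)}\to 0$, is amply strong to drive the parenthetical perturbation below $1/2$ for $n$ large, leaving $\Delta^\top\widehat\Gamma\Delta\geq |\Delta|_2^2/2$; multiplied by the prefactor $2$ in the Hessian this exceeds the required $|\Delta|_2^2/2$.

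The genuinely delicate step is the entrywise Bernstein bound: the uniform bound $M=\nu^2$ and Lipschitz multiplier $2\nu$ must be carried through Theorem 2.1 so that the sub-Gaussian part of \eqref{thmb} controls the deviation at the rate $\nu\gamma\tau\sqrt{(\log p)/n}$ while the $p^2$-entry union bound still leaves polynomial failure probability $p^{-c_2}$. Once this is in hand, the Yule--Walker lower bound and the cone reduction are routine.
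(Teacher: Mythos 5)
Your proof is correct, but it takes a genuinely different route from the paper's. The paper proves the RE condition by applying Theorem 2.1 to the sparse quadratic forms $G(X_i)=(u^\top \widetilde X_i)^2$ for fixed $u$ with $|u|_0\leq s$, $|u|_2\leq 1$ (Lipschitz constant $4s\nu$), making the deviation level $t\asymp \mu_q\gamma\tau s^2[(\log p)/n]^{(q-2)/(2q-2)}$ large enough that an $\varepsilon$-net over $2s$-sparse directions (Lemma 15 of Loh and Wainwright) survives the union bound, and then extending to all of $\mathbb{R}^p$ via their Lemma 12, which yields $|v^\top(\Gamma-\mE\Gamma)v|\leq 27t(|v|_2^2+|v|_1^2/s)$ before restricting to the cone. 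You instead control $\|\widehat\Gamma-\mE\widehat\Gamma\|_{\max}$ entrywise by Theorem 2.1 with $M=\nu^2$ and Lipschitz multiplier $2\nu$, take a union bound over $p^2$ entries, and pass to the cone via $|\Delta^\top M\Delta|\leq \|M\|_{\max}|\Delta|_1^2\leq 16s\|M\|_{\max}|\Delta|_2^2$; this is exactly the concentration step the paper itself uses for the Dantzig estimator (its Lemma S3.6), so all the ingredients are already validated in the paper. What each route buys: yours avoids the discretization machinery entirely and only needs $s\cdot\mu_q\gamma\tau[(\log p)/n]^{(q-2)/(2q-2)}\to 0$, i.e.\ a factor $s$ rather than the $s^2$ built into the paper's choice of $t$ (so (B2) is more than sufficient for you); the paper's route delivers uniform control of the quadratic form over all sparse directions, which it recycles for the Frobenius-norm statement via the $I_p\otimes(\widetilde X^\top\widetilde X/n)$ argument, though your max-norm bound would serve there as well. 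Two small points: in the truncation-bias term the cross product $\mE[|X_{ik}|(|X_{ij}|-\nu)_+]$ needs H\"older with exponents $(q, q/(q-1))$ as in the paper, not just the one-factor Markov bound you quote (the claimed rate $\mu_q^2[(\log p)/n]^{(q-2)/(2q-2)}$ is still correct); and your explicit use of $\Sigma_0=A\Sigma_0A^\top+I_p\succeq I_p$ supplies the justification for $\Delta^\top\mE[X_1X_1^\top]\Delta\geq|\Delta|_2^2$ that the paper leaves implicit. Both arguments share the same residual requirement that $s\mu_q^2[(\log p)/n]^{(q-2)/(2q-2)}$ stay small, which (B2) covers only when $\mu_q\lesssim\gamma\tau s$, so this is not a defect you introduced.
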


\begin{proof}[Proof of Lemma \ref{lem2}]
Denote $\widetilde X=( \widetilde X_0, \widetilde X_1,\dots, \widetilde X_{n-1})^\top$. Then $\nabla^2\widetilde L_j(\boldsymbol a_{j\cdot})=2{\widetilde X^\top\widetilde X}/{n}=:\Gamma$. We shall first show that with probability at least $1-4p^{-c_2}$ for some positive constant $c_2$, it holds that
\begin{equation}\label{l5.1}
	\big|v^\top\big(\Gamma-\mE\Gamma\big)v\big|\leq t,\ \forall\, v\in\mb{R}^{p},\ |v|_0\leq2s, \ |v|_2\leq1,
\end{equation}
where $t=c_1\mu_q\gamma\tau s^2({\log p}/{n})^{1/2-1/[2(q-1)]}.$ For any $u\in\mb{R}^p$ such that $|u|_2\leq1$ and $|u|_0\leq s$ hence $|u|_1\leq\sqrt s$, write 
$$u^\top(\Gamma-\mE\Gamma)u=2n^{-1}\sum_{i=0}^{n-1}(u^\top \widetilde X_i)^2-\mE(u^\top\widetilde X_i)^2=:n^{-1}\sum_{i=0}^{n-1}G(X_i)-\mE[G(X_i)].$$
Thus, for $G(X_i)=(u^\top \widetilde X_i)^2$, we have
$$|G(x)-G(y)|\leq 2|u^\top(x+y)\cdot u^\top(x-y)|\leq 4s\nu g^\top|x-y|,$$
where $|g|_1\leq1$.
Apply Theorem 2.1 to function $G(X_i)/(4s\nu)$ and we have for any fixed $u$ such that $|u|_2\leq1$ and $|u|_0\leq s$,
$$\mP\Big(\big|u^\top\big(\Gamma-\mE\Gamma\big)u\big|\geq t\Big)\leq4\exp\big\{-c_3{s^2\log p}\big\}.$$
Following the same spirit of the $\varepsilon$-net argument in lemma 15 of \cite{loh2012high}, we can obtain that \eqref{l5.1} holds with probability at least $$1-4\exp\big\{2s\log9+2s\log p-c_3{s^2\log p}\big\}\geq1-4p^{-c_2},$$
provided that $p\to \infty$ and a sufficiently large $c_3$ (or equivalently $c_1$). By Lemma 12 in \cite{loh2012high} and \eqref{l5.1}, it further implies that with probability greater than $1-4p^{-c_2},$
\begin{equation}\label{l5.2}|v^\top(\Gamma-\mE\Gamma)v|\leq 27t\bigg(|v|_2^2+\frac{|v|_1^2}{s}\bigg), \ \forall v\in\mb{R}^{p}.
\end{equation}
Note that when $\Delta\in C(S)$,
\begin{equation}
\label{eqq}
|\Delta|_1=|\Delta_S|_1+|\Delta_{S^c}|_1\leq4|\Delta_S|_1\leq4\sqrt s|\Delta_S|_2\leq4\sqrt s|\Delta|_2.
\end{equation}
Furthermore, some algebra delivers that
\begin{eqnarray}\label{l5.3}
\Delta^\top\mE\big[\Gamma\big]\Delta
		=2\mE[(\widetilde X_1^\top\Delta)^2]
		&\geq & 2\Big(\Delta^\top \mE[ X_1X_1^\top ]\Delta-\Delta^\top \mE[X_1X_1^\top-\widetilde X_1\widetilde X_1^\top]\Delta\Big)\cr
		&\geq& 2|\Delta|_2^2-2|\Delta|_1^2 \big|\mE[X_1X_1^\top-\widetilde X_1\widetilde X_1^\top]\big|_\infty.
\end{eqnarray}

For any $1\leq j,k\leq p$, by the triangle and H{\"o}lder's inequality, 
\begin{equation*}
|\mE \tilde{X}_{ij}\tilde{X}_{ik} - \mE X_{ij}X_{ik} | \leq |\mE (\tilde{X}_{ij}-X_{ij})\tilde{X}_{ik})|+ |\mE(\tilde{X}_{ik}-X_{ik}){X}_{ij})|.
\end{equation*}
We have
\begin{eqnarray*}
|\mE (\tilde{X}_{ij}-X_{ij})\tilde{X}_{ik})| &\leq & \|\tilde{X}_{ik}\|_q\cdot\|\tilde{X}_{ij}-X_{ij}\|_{q/(q-1)}\cr
&\leq &\mu_q \|\tilde{X}_{ij}-X_{ij}\|_{q/(q-1)},
\end{eqnarray*}
where
\begin{equation*}
\|\tilde{X}_{ij}-X_{ij}\|_{q/(q-1)}^{q/(q-1)} \leq \mE|X_{ij}|^{q/(q-1)} {\bf 1}\{|X_{ij}|\geq \nu\} \leq \mu_q^{q} \nu^{-q(q-2)/(q-1)}.
\end{equation*}
It then follows that $|\mE (\tilde{X}_{ij}-X_{ij})\tilde{X}_{ik}| \leq \mu_q^q \nu^{2-q}$. We can also deal with $|\mE(\tilde{X}_{ik}-X_{ik})X_{ij})|$ similarly. As a result, we have the bias
\begin{equation}
\label{maxnorm}
|\mE [\tilde{X}_{ij}\tilde{X}_{ik} - X_{ij}X_{ik}] | \leq 2 \mu_q^q \nu^{2-q} \leq C \mu_q^2 \big(\frac{\log p}{n}\big)^{\frac{1}{2}-\frac{1}{2q-2}}.
\end{equation}

By (\ref{eqq}), (\ref{l5.3}) and (\ref{maxnorm}), it follows that 
	\begin{align}\label{l5.4}
		\Delta^\top\mE\big[\Gamma\big]\Delta
		&\geq2|\Delta|_2^2-16 Cs\mu_q^2\big(\frac{\log p}n\big)^{\frac12 - \frac1{2q-2}}|\Delta|_2^2\geq |\Delta|_2^2.
	\end{align}
Recall that $t=c_1\mu_q\gamma\tau s^2(\log p/n)^{1/2-1/[2(q-1)]}\to0$ by Assumption (B2).
Combining \eqref{l5.2} and \eqref{l5.4}, we can establish the following RE condition
$$\nabla^2 L_j(\boldsymbol a_{j\cdot})\geq |\Delta|_2^2-27t(|\Delta|_2^2+|\Delta|_1^2/s)\geq|\Delta|_2^2-459t|\Delta|_2^2\geq\frac{1}{2}|\Delta|_2^2,$$
for all $\Delta\in C(S)$ with probability no less than $1-4p^{-c_2}$.
\end{proof}

\begin{proof}[Proof of Theorem 3.3]
	Let $\widehat\Delta_j=\widehat{\boldsymbol a}_{j\cdot}-{\boldsymbol a}_{j\cdot}$ for $j=1,\dots,p$. As the solution of (3.5), $\widehat{\boldsymbol a}_{j\cdot}$ satisfies
		$$ \widetilde L_j(\widehat{\boldsymbol a}_{j\cdot})+\lambda|\widehat{\boldsymbol a}_{j\cdot}|_1\leq \widetilde L_j(\boldsymbol a_{j\cdot})+\lambda|\boldsymbol a_{j\cdot}|_1,$$
	which together with convexity implies,
	\begin{equation}\label{t5.1}0\leq \widetilde L_j(\widehat{\boldsymbol a}_{j\cdot})-\widetilde L_j(\boldsymbol a_{j\cdot})-\langle \nabla \widetilde L_j(\boldsymbol a_{j\cdot}),\widehat\Delta_j\rangle\leq\lambda(|\boldsymbol a_{j\cdot}|_1-|\widehat{\boldsymbol a}_{j\cdot}|_1)+\big|\nabla \widetilde L_j(\boldsymbol a_{j\cdot})\big|_\infty|\widehat\Delta_j|_1.\end{equation}
	Denote by $A$ and $B$ the events in Lemma \ref{lem1} and Lemma \ref{lem2} respectively. Then $\mP(A\cap B)=1-\mP(A^c\cup B^c)\geq1-8p^{-c}$ for $c=\min\{c_1,c_2\}.$ Conditioned on the event $A$, \eqref{t5.1} implies
	\begin{eqnarray*}
	0&\leq&|\boldsymbol a_{j\cdot, S}|_1-|\widehat{\boldsymbol a}_{j\cdot, S}|_1-|\widehat{\boldsymbol a}_{j\cdot, S^c}|_1+\frac12|\widehat\Delta_j|_1\cr
	&\leq&|\widehat\Delta_{j, S}|_1-|\widehat\Delta_{j, S^c}|_1+\frac12|\widehat\Delta_j|_1=\frac32|\widehat\Delta_{j,S
}|_1-\frac12|\widehat\Delta_{j, S^c}|_1,
	\end{eqnarray*}
	which further implies $\widehat\Delta_j\in C(S)$ for all $1\leq j\leq p$. Conditioned on the event $B$, by \eqref{l4.1} and the second inequality in \eqref{t5.1}, we have
	\begin{align}
		\frac{1}{2}|\widehat\Delta_j|_2^2\leq \big(\lambda+\big|\nabla L_n(\boldsymbol a_{j\cdot})\big|_\infty\big)|\widehat\Delta_j|_1
		\leq6\sqrt s\lambda|\widehat\Delta_j|_2.
		\end{align}
	This immediately shows for all $1\leq j\leq p$
		\begin{equation}\label{eq: l2}|\widehat\Delta_j|_2\leq{12\sqrt s\lambda}\asymp\mu_q\gamma\tau(\|A\|_{\infty}+1)\sqrt{s}\bigg(\frac{\log p}n\bigg)^{\frac12-\frac{1}{2q-2}}\end{equation}
	as well as
	$$|\widehat\Delta_j|_1\lesssim \mu_q\gamma\tau s(\|A\|_{\infty}+1)\bigg(\frac{\log p}n\bigg)^{\frac12-\frac{1}{2q-2}}.$$	
	Hence, (3.6) follows in view of $\|\widehat A-A\|_{\infty} = \max_{j}|\widehat{\Delta}_j|_1$.
	Moreover, if we consider the estimation of $\textbf{Vec}(A)=({\boldsymbol a}_{1\cdot}^\top,{\boldsymbol a}_{2\cdot}^\top,\dots,{\boldsymbol a}_{p\cdot}^\top)^\top\in\mb{R}^{p^2}$ with the sparsity parameter $\mathcal{S}=\sum_{i=j}^ps_j$, by Assumption (B2') and similar arguments of verifying the RE condition in Lemma \ref{lem2}, (\ref{eq: lem2}) becomes
	$$2\Delta^\top\bigg[I_p\otimes\bigg(\frac{\widetilde X^\top\widetilde X}{n}\bigg)\bigg]\Delta\geq\frac{1}2|\Delta|_2^2,\quad\text{for all }\Delta\in\mb{R}^{p^2}.$$
	Thus, similarly as \eqref{eq: l2}, (3.7) follows.
	\end{proof}
Next we shall concern the robust Dantzig-type estimator.
 \begin{lem}\label{lemdan}
 	Let Assumption (B1) be satisfied. Choose the truncation parameter $\nu\asymp \mu_q(n/\log p)^{1/(2q-2)}$. Let $\lambda \asymp \mu_q \gamma\tau(\|A\|_1+1)[(\log p)/n]^{(q-2)/(2q-2)}$. Then with probability at least $1-8p^{-c'}$ for some constant $c'>0$, it holds that
	$$\|\widehat\Sigma_0-\Sigma_0\|_{\max}\leq \lambda_0\quad\text{and}\quad \|\widehat\Sigma_1-\Sigma_1\|_{\max}\leq \lambda_0.$$
\end{lem}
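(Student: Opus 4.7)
The plan is to bound $\|\widehat\Sigma_k-\Sigma_k\|_{\max}$ for $k=0,1$ entrywise by splitting
$$\widehat\Sigma_{k,jl}-\Sigma_{k,jl}=\bigl(\widehat\Sigma_{k,jl}-\mE\widehat\Sigma_{k,jl}\bigr)+\bigl(\mE\widehat\Sigma_{k,jl}-\Sigma_{k,jl}\bigr),$$
handling the truncation bias by a moment calculation and the stochastic fluctuation by Corollary C.1 (the Bernstein inequality for the augmented $2p$-dimensional linear process $(X_i^\top,X_{i-1}^\top)^\top$), then union-bounding over all $p^2$ entries and over $k\in\{0,1\}$.

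First I would handle the bias. Since $\widetilde X_{ij}-X_{ij}$ vanishes on $\{|X_{ij}|<\nu\}$ and is bounded by $|X_{ij}|$ in magnitude, H\"older's inequality with exponents $q$ and $q/(q-1)$, applied exactly as in the derivation of (maxnorm) inside the proof of Lemma 2 above, yields
$$\bigl|\mE[\widetilde X_{i-k,j}\widetilde X_{il}]-\mE[X_{i-k,j}X_{il}]\bigr|\;\leq\;2\mu_q^{q}\nu^{\,2-q}\;\lesssim\;\mu_q^{2}\Bigl(\tfrac{\log p}{n}\Bigr)^{\!(q-2)/(2q-2)},$$
once $\nu\asymp\mu_q(n/\log p)^{1/(2q-2)}$ is substituted. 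This is automatically at most $\lambda_0$ for the constant we end up choosing.

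For the stochastic piece, fix $j,l$ and apply Corollary C.1 to the bounded function $G(u,v)=\phi_\nu(u_j)\phi_\nu(v_l)$ (take $v\equiv u$ for $\Sigma_0$). It satisfies $|G|\leq\nu^2$ and the Lipschitz inequality $|G(u,v)-G(u',v')|\leq\nu(|u_j-u'_j|+|v_l-v'_l|)$, so after renormalizing by $2\nu$ the Lipschitz weight vector has unit $\ell_1$-mass and the uniform bound becomes $M=\nu/2$. Corollary C.1, after undoing the rescaling, then gives
$$\mP\bigl(|\widehat\Sigma_{k,jl}-\mE\widehat\Sigma_{k,jl}|\geq t\bigr)\;\leq\;2\exp\!\Bigl\{-\tfrac{nt^{2}}{C_1\sigma_\varepsilon^{2}\gamma^{2}\tau^{2}\nu^{2}+C_2\tau\nu t}\Bigr\}.$$
Choosing $t\asymp \gamma\tau\nu\sqrt{(\log p)/n}\asymp \mu_q\gamma\tau(\log p/n)^{(q-2)/(2q-2)}$ makes the right-hand side at most $p^{-c-2}$ for a suitably large constant, so that a union bound over the $p^2$ entries of each matrix and over $k\in\{0,1\}$ yields the stated probability at least $1-8p^{-c'}$.

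The main thing to verify carefully is the balancing: the truncation level $\nu\asymp\mu_q(n/\log p)^{1/(2q-2)}$ is chosen precisely so that the bias $\mu_q^{q}\nu^{2-q}$ and the Gaussian-type deviation $\gamma\tau\nu\sqrt{(\log p)/n}$ are of the same order; one must also check that the sub-exponential term $C_2\tau\nu t$ in the Bernstein denominator remains dominated by the variance term under the chosen $t$, which reduces to the elementary inequality $(q-2)/(2q-2)<1/2$ valid for $q>2$. The only delicate point is making sure the $(X_i,X_{i-1})$ process satisfies hypothesis \eqref{gmc revise} with the same $\tau$ and with $\gamma$ enlarged only by a factor depending on $\rho_0$; this is exactly the content of Corollary C.1, so no new work is needed.
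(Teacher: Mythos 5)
Your proposal is correct and follows essentially the same route as the paper's proof: split each entry into truncation bias (bounded via H\"older exactly as in the $\|\mE[\widetilde X_{ij}\widetilde X_{ik}-X_{ij}X_{ik}]\|$ computation) and a stochastic fluctuation controlled by Theorem 2.1/Corollary C.1 applied to the bounded, Lipschitz truncated product, then balance $\nu$ and $t$ and take a union bound over the $p^2$ entries and $k\in\{0,1\}$. The only cosmetic differences are that you invoke the $2p$-dimensional Corollary C.1 for both lags (the paper uses Theorem 2.1 for $\Sigma_0$) and that your stated reason the sub-exponential term is harmless is really the standing scaling $\log p\lesssim n$ rather than the inequality $(q-2)/(2q-2)<1/2$, a point that does not affect the argument.
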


\begin{proof}[Proof of Lemma \ref{lemdan}]
	Let $\lambda_0=C\mu_q\tau\gamma [(\log p)/n]^{(q-2)/(2q-2)}$ for a sufficiently large constant $C$. Applying Theorem 2.1 to the $(m,l)$-th entry of $\widehat\Sigma_0$, we have
	\begin{align*}
		\mP\Big(\frac1n\Big|\sum_{i=1}^{n}\widetilde X_{im}\widetilde X_{il}-\mE\widetilde X_{im}\widetilde X_{il}\Big|\geq \lambda_0\Big)\leq 4\exp\Big\{-\frac{c^2\log p}{2C_1}\Big\}=4p^{-c^2/(2C_1)}.
	\end{align*}
	By \eqref{maxnorm} in the proof of Lemma \ref{lem2}, we see that
	\begin{align*}
		\Big|\mE \widetilde X_{im}\widetilde X_{il}-\mE X_{im}X_{il}\Big| \leq c \mu_q^2 \big(\frac{\log p}{n}\big)^{\frac{1}{2}-\frac{1}{2q-2}}\leq \lambda_0.\end{align*}
	Therefore,
	\begin{eqnarray*}
	  &&\mP\Big(\frac1n\Big|\sum_{i=1}^{n}\widetilde X_{im}\widetilde X_{il}-\mE[ X_{im}X_{il}]\Big|\geq \lambda_0\Big) \cr
	  &\leq & \mP\Big(\frac1n\Big|\sum_{i=1}^{n}\widetilde X_{im}\widetilde X_{il}-\mE[ \widetilde X_{im}\widetilde X_{il}]\Big|\geq C_2\lambda_0\Big)\leq 4p^{-C_3}
	\end{eqnarray*}
	for some $C_3>1$.
	Taking a union bound yields
	$$\mP(\|\widehat \Sigma_0-\Sigma_0\|_{\max}\geq\lambda_0)\leq 4p^{-c'},$$
	where $c'=C_3-1>0$.
By Corollary \ref{corbern}, similar arguments apply to $\widehat\Sigma_1$, which delivers $\|\widehat\Sigma_1-\Sigma_1\|_{\max}\leq\lambda_0$ with probability at least $1-4p^{-c'}$. In conclusion, it holds simultaneously that $\|\widehat\Sigma_0-\Sigma_0\|_{\max}\leq\lambda_0$ and $\|\widehat\Sigma_1-\Sigma_1\|_{\max}\leq\lambda_0$ with probability at least $1-8p^{-c'}$.
\end{proof}

\begin{proof}[Proof of Theorem 3.4]
	We first show that $A$ is feasible to the convex programming (3.8) for $\lambda=(\|A\|_1+1)\lambda_0$ with high probability. By the Yule-Walker equation and Lemma \ref{lemdan},  we have
	\begin{align*}
		\|\widehat\Sigma_0 A-\widehat\Sigma_1\|_{\max}&\leq\|\widehat\Sigma_0A-\Sigma_1\|_{\max}+\|\Sigma_1-\widehat\Sigma_1\|_{\max}\\
		&\leq\|\widehat\Sigma_0-\Sigma_0\|_{\max}\|A\|_1+\|\Sigma_1-\widehat\Sigma_1\|_{\max}\leq\lambda,
	\end{align*}
	with probability no less than $1-8p^{-c'}$.
Therefore, conditioned on the event in Lemma \ref{lemdan}, we conclude that
$|\widehat{\boldsymbol a}_{\cdot j}|_1\leq|{\boldsymbol a}_{\cdot j}|_1$ for all $j=1,\dots,p$ and hence
$\|\widehat A\|_1\leq \|A\|_1.$
Then we have
\begin{eqnarray*}
	\|\widehat A-A\|_{\max}&=&\|\Sigma_0^{-1}(\Sigma_0\widehat A-\widehat\Sigma_1+\widehat\Sigma_1-\Sigma_1)\|_{\max} \cr
	&\leq &  \|\Sigma_0^{-1}(\Sigma_0\widehat A-\widehat\Sigma_0\widehat A+\widehat\Sigma_0\widehat A-\widehat\Sigma_1)\|_{\max}+\|\Sigma_0^{-1}(\widehat\Sigma_1-\Sigma_1)\|_{\max}\cr
	&\leq & \|\Sigma_0^{-1}\|_1\|\Sigma_0-\widehat\Sigma_0\|_{\max}\|\widehat A\|_1+\|\Sigma_0^{-1}\|_1\|\widehat\Sigma_0\widehat A-\widehat\Sigma_1\|_{\max}\cr
	&&+\|\Sigma_0^{-1}\|_1\|\widehat\Sigma_1-\Sigma_1\|_{\max}.
\end{eqnarray*}
By Lemma \ref{lemdan} and the feasibility of $\widehat A$, we have
$$	\|\widehat A-A\|_{\max}\leq \|\Sigma_0^{-1}\|_1(\lambda_0\|A_1\|+\lambda+\lambda_0)\\
	=2\|\Sigma_0^{-1}\|_1\lambda.$$
Now we shall bound $\|\widehat A-A\|_1$ from above. Denote by $S_j$ the support of ${\boldsymbol a}_{\cdot j}$ for $j=1,\dots,p$. Then for any $1\leq j\leq p$, we have
\begin{align}\label{6.4}
	|\widehat{\boldsymbol a}_{\cdot j}-{\boldsymbol a}_{\cdot j}|_1
	&=\big|\widehat{\boldsymbol a}_{\cdot j, S_j}-{\boldsymbol a}_{\cdot j, S_j}\big|_1+\big|\widehat{\boldsymbol a}_{\cdot j}\big|_1-\big|\widehat{\boldsymbol a}_{\cdot j, S_j}\big|_1 \cr
	&\leq \big|\widehat{\boldsymbol a}_{\cdot j, S_j}-{\boldsymbol a}_{\cdot j, S_j}\big|_1+\big|{\boldsymbol a}_{\cdot j}\big|_1-\big|\widehat{\boldsymbol a}_{\cdot j, S_j}\big|_1\cr
	&\leq 2\big|\widehat{\boldsymbol a}_{\cdot j, S_j}-{\boldsymbol a}_{\cdot j, S_j}\big|_1
	\leq 4s^*\|\Sigma_0^{-1}\|_1\lambda.
\end{align}
Since \eqref{6.4} holds for all $1\leq j\leq p$, we conclude that
$$\|\widehat A-A\|_1\leq4s^*\|\Sigma_0^{-1}\|_1\lambda\lesssim \mu_q s^*\gamma\tau\|\Sigma_0^{-1}\|_1(\|A\|_1+1)\bigg(\frac{\log p}n\bigg)^{\frac12-\frac{1}{2q-2}}.$$
\end{proof}

\end{appendices}

\end{document}